\newcommand{\mf}[1]{\mathfrak{#1}}
\newcommand{\bb}[1]{\mathbb{#1}}
\newcommand{\mc}[1]{\mathcal{#1}}
\newcommand{\wt}[1]{\widetilde{#1}}
\newcommand{\mymag}{\text{mag}}
\newcommand{\dotarg}{ \ \cdot \ }
\newcommand{\bfX}{{\bf X}}
\newcommand{\bfZ}{{\bf Z}}
\DeclareMathOperator{\Exp}{exp}
\DeclareMathOperator{\Aut}{Aut}
\DeclareMathOperator{\so}{\mf{so}}
\DeclareMathOperator{\sgn}{sgn}
\DeclareMathOperator{\ad}{ad}
\DeclareMathOperator{\myspan}{span}
\DeclareMathOperator{\Per}{Per}
\newcounter{thmcounter}
\numberwithin{thmcounter}{section}
\newtheorem{thm}[thmcounter]{Theorem}
\newtheorem{lem}[thmcounter]{Lemma}
\newtheorem{cor}[thmcounter]{Corollary}
\newtheorem*{theorem*}{Theorem}
\theoremstyle{definition}
\newtheorem{defn}[thmcounter]{Definition}
\theoremstyle{remark}
\newtheorem{rmk}[thmcounter]{Remark}
\newtheorem{ex}[thmcounter]{Example}
\title{Periodic Magnetic Geodesics on Heisenberg Manifolds}
\author{Jonathan Epstein, Ruth Gornet, Maura B. Mast}
\date{\today}
\begin{document}

\maketitle

\begin{abstract}
We study the dynamics of magnetic flows on Heisenberg groups. Let $H$ denote the three-dimensional simply connected Heisenberg Lie group endowed with a left-invariant Riemannian metric and an exact, left-invariant magnetic field.  Let $\Gamma$ be a lattice subgroup of $H,$ so that $\Gamma\backslash H$ is a closed nilmanifold.   We first find an explicit  description of magnetic geodesics on $H$, then determine all closed magnetic geodesics and their lengths for $\Gamma \backslash H$. We then consider two applications of these results: the density of periodic magnetic geodesics and marked magnetic length spectrum rigidity. We show that tangent vectors to periodic magnetic geodesics are dense for sufficiently large energy levels. We also show that if $\Gamma_1, \Gamma_2 < H$ are two lattices such that $\Gamma_1 \backslash H$ and $\Gamma_2 \backslash H$ have the same marked magnetic length spectrum, then they are isometric as Riemannian manifolds. Both results show that this class of magnetic flows carries significant information about the underlying geometry. Finally, we provide an example to show that extending this analysis of magnetic flows  to the Heisenberg type setting is considerably more difficult.
\end{abstract}

\section{Introduction}

From the perspective of classical mechanics, the geodesics of a Riemannian manifold $(M,g)$ are the possible trajectories of a point mass moving in the absence of any forces and in zero potential. A magnetic field can be introduced by choosing a closed 2-form $\Omega$ on $M$. A charged particle moving on $M$ now experiences a Lorentz force, and its trajectory is called a magnetic geodesic. As with Riemannian geodesics, they can be handled collectively as a single object called the magnetic geodesic flow on $TM$ or $T^*M$ (see Section \ref{sec:mag flows} for precise definitions). Many classical questions concerning geodesic flows have corresponding analogs for magnetic flows. Indeed, magnetic flows  display a number of remarkable properties. See \cite{grognet1999maginnegcurv}, \cite{paternain2006horocycleflows}, \cite{burns2006rigidity}, \cite{butlerpaternain2008magnetic}, and \cite{abbondandolo2017exactflowsonsurfaces} for a sampling of results. 

One can interpret magnetic flows as a particular type of perturbation of the underlying geodesic flow. Much is known about the the underlying geodesic flow of nilmanifolds, and we are interested in what properties persist or fail to persist for magnetic flows. This perspective is adopted for the property of topological entropy in \cite{epstein2017topent} in the setting of two-step nilmanifolds and in \cite{butlerpaternain2008magnetic} in the setting of $\text{SOL}$ manifolds; and for topological entropy and the Anosov property in \cite{paternainpaternain1995}, \cite{paternainpaternain1997} and \cite{burnspaternain2002anosov}. In \cite{peyerimhoff2003} the authors show that at high enough energy levels the magnetic geodesics are quasi-geodesics with respect to the underlying Riemannian structure. An important classical question of geodesic flows concerns the existence of closed geodesics and related properties such as their lengths and their density. This paper focuses on these properties in the context of magnetic flows generated by left-invariant magnetic fields on Riemannian two-step nilmanifolds. Although this setting is more complicated than the Euclidean setting (i.e. 1-step nilmanifolds), many explicit computations are still tractable, and it has been a rich source of conjectures and counter-examples.

Let $H$ denote a simply connected $(2n+1)$-dimensional Heisenberg group endowed with a left-invariant Riemannian metric. The Lie group $H$ admits cocompact discrete subgroups (i.e. lattices) $\Gamma$ and, because the Riemannian metric is left-invariant, the quotient inherits a metric such that $\Gamma \backslash H$ is a compact Riemannian manifold and $H \to  \Gamma \backslash H$ is a Riemannian covering. A geodesic $\sigma(t)$ in $H$ is said to be translated by an element $\gamma \in H$ if $\gamma \sigma(t) = \sigma(t + \omega)$ for all $t$ and for some $\omega > 0$. A geodesic that is translated by $\gamma$ is said to be $\gamma$-periodic. When $\gamma \in \Gamma$, each geodesics translated by $\gamma$ will project to a smoothly closed geodesic in $\Gamma \backslash H$. Geodesic behavior in $\Gamma \backslash H$ and, more generally, in $\Gamma\backslash N$, where $N$ denotes a simply connected two-step nilpotent Lie group with a left-invariant metric, is fairly well understood. In the general Riemannian two-step case, it is possible to describe precisely the set of smoothly closed geodesics in $\Gamma \backslash N$, along with their lengths. See Eberlein \cite{eberlein1994geometry} for the Heisenberg case and Gornet-Mast \cite {gornetmast2000lengthspectrum} for the more general setting. Our main result is a complete analysis of left-invariant, exact magnetic flows on three-dimensional Heisenberg groups.

\begin{theorem*}[See Section \ref{sec:magnetic geodesic equations on simply connected 2n+1 dim Heis}, Lemma \ref{lem:effect of energy constraint on range of ells} and Theorem \ref{thm:periods of central elt in 3D Heisenberg}]
Let $H$ be a three-dimensional simply connected Heisenberg group, $g$ a left-invariant metric on $H$, and $\Omega$ a left-invariant, exact magnetic field on $H$. For any $\gamma \in H$, there is an explicit description of all the $\gamma$-periodic magnetic geodesics of the magnetic flow generated by $(H, g, \Omega)$ satisfying $\sigma(0) = e$, the identity element. The lengths of closed magnetic geodesics may be explicitly computed in terms of metric Lie algebra information.
\end{theorem*}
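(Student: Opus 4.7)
The plan is to linearize the problem by left-invariance. Let $V(t) = (dL_{\sigma(t)^{-1}})\dot\sigma(t) \in \mf{h}$; then the magnetic geodesic equation $\nabla_{\dot\sigma}\dot\sigma = Y(\dot\sigma)$ becomes an autonomous ODE $\dot V = Q(V) + Y(V)$ on the Lie algebra, where $Y$ is the skew-adjoint Lorentz operator defined by $g(Yu,v) = \Omega(u,v)$ and $Q$ is the quadratic term from the Koszul formula for the Levi--Civita connection of a left-invariant metric. Choose an adapted orthonormal basis $\{X_1, X_2, X_3\}$ of $\mf{h}$ with $[X_1,X_2] = \lambda X_3$ and $X_3$ central. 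The exactness hypothesis, together with left-invariance, pins $\Omega$ down to a single generator $\Omega = a\, X_1^\ast \wedge X_2^\ast$ (a left-invariant primitive exists only for this component; the other generators of $\Lambda^2 \mf{h}^\ast$ carry non-trivial classes in the nilmanifold cohomology). Writing $V = V_1 X_1 + V_2 X_2 + V_3 X_3$, the system decouples: $V_3$ is conserved, and $(V_1, V_2)$ executes pure rotation at constant angular frequency $\omega_0 = \lambda V_3 + a$. Hence $V(t)$ is explicit in closed form and $|V|^2$ is conserved, giving the usual energy integral.

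Next I would integrate $\dot\sigma = \sum V_i X_i(\sigma)$ on the group in Mal'cev coordinates. Because $H$ is two-step nilpotent, this reduces to quadrature: the two horizontal coordinates of $\sigma(t)$ are integrals of sinusoids, and the central coordinate picks up a $V_3 t$ term plus a quadratic integral of the horizontal motion. The closed-orbit condition $V(\omega) = V(0)$ forces $\omega_0 \omega = 2\pi \ell$ for some nonzero integer $\ell$ (the winding number), and the endpoint condition $\sigma(\omega) = \gamma$ becomes a small algebraic system for $(V_1(0), V_2(0), V_3, \omega, \ell)$ in terms of the Mal'cev coordinates of $\gamma$. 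The length over one period is $L = \omega|V(0)| = \omega\sqrt{V_1(0)^2 + V_2(0)^2 + V_3^2}$, which can then be read off in closed form once the algebraic system is solved.

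Finally I would organize the solutions by whether $\gamma$ is central. If $\gamma$ is central, the horizontal orbit must close exactly after $\ell$ windings and the central target uniquely determines the remaining parameters, leaving an $S^1$-family of initial horizontal directions for each admissible $\ell$; this gives Theorem \ref{thm:periods of central elt in 3D Heisenberg}. If $\gamma$ is non-central, the horizontal target is nonzero, which pins the initial phase and radius up to discrete choices, and the energy constraint $|V(0)|^2 = E$ then prunes the admissible range of $\ell$; this gives Lemma \ref{lem:effect of energy constraint on range of ells}. The main technical obstacle I foresee is the central-coordinate computation: one must evaluate the quadratic integral of the horizontal sinusoids over $\omega$ seconds and then re-express the answer through $\omega_0 \omega = 2\pi \ell$ so that $\ell$ appears transparently in the final length formula. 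Keeping the bookkeeping clean enough to exhibit all, and only, the $\gamma$-periodic orbits, rather than an unindexed family, is where the delicate part lies.
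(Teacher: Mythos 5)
Your overall framework is sound and is essentially the approach the paper relegates to its appendix: left-trivialize the velocity to get a linear ODE on $\mf{h}$ (the horizontal component rotates at frequency determined by $V_3$ and the field strength, $V_3$ is conserved), integrate by quadrature in exponential coordinates, and impose the two conditions $V(\omega)=V(0)$ and $\sigma(\omega)=\gamma$. The main text instead works on $\mf{h}^*$ via collective Hamiltonians and the Euler vector field $E_h(p)=-\ad^*_{dh_p}p$; the two routes produce the same coordinate formulas, and your observation that exactness forces $\Omega$ to be a multiple of $X_1^*\wedge X_2^*$ matches Lemma \ref{lem:every exact left-invariant 2-form is d of something central}.

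There is, however, a genuine error in how you assign mechanisms to cases. You state that for non-central $\gamma$ the nonzero horizontal target ``pins the initial phase and radius'' and that the energy constraint then ``prunes the admissible range of $\ell$,'' attributing this to Lemma \ref{lem:effect of energy constraint on range of ells}. This cannot work: if the geodesic is spiraling, the closure condition $V(\omega)=V(0)$ forces $\omega_0\omega=2\pi\ell$, and integrating the horizontal sinusoids over a whole number of periods gives \emph{zero} net horizontal displacement, i.e.\ $\bfX(t+\omega)=\bfX(t)$. So a spiraling geodesic can only be translated by a central element, and the entire winding-number/$\ell$-pruning analysis (Lemma \ref{lem:effect of energy constraint on range of ells}) belongs to the central case. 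Non-central $\gamma$ translate only one-parameter subgroups (Lemma \ref{lem:non-central elts only translate non-spiraling mag goeds} and Theorem \ref{thm:gamma-periodic mag geods when gamma is not central}); there is no $\ell$ there, the unique length is $E|V_\gamma|/\sqrt{E^2-B^2}$, and such geodesics exist only for $E>|B|$. Relatedly, your proposal never treats the degenerate branches of the ODE --- the non-spiraling solutions where the horizontal part of $V(0)$ vanishes or $\omega_0=0$ --- yet these are exactly the solutions that serve non-central classes, and the central one-parameter subgroups $\exp(\pm tEZ)$ that contribute the length $|z_\gamma|$ to every central class at every energy (Lemma \ref{lem:central elements translate only central 1-param subgroups}). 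You also omit the contractible case $\gamma=e$, where closed spiraling geodesics exist precisely for $0<E<|B|$ (Theorem \ref{thm:closed contractible magnetic geodesics}). Until the case analysis is corrected so that the $\ell$-indexed spiraling family is attached to central $\gamma$ and the straight-line family to non-central $\gamma$, the proposal does not yield the claimed classification.
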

This theorem allows for the explicit computation of all closed magnetic geodesics in the free homotopy class determined by each $\gamma \in \Gamma$. Unlike the Riemannian case, closed magnetic geodesics exist in all nontrivial homotopy classes only for sufficiently large energy. In addition, there exist closed and contractible magnetic geodesics on sufficiently small energy levels. 


We give two applications of our main result. The first concerns the density of tangent vectors to closed magnetic geodesics. Eberlein analyzes this property for Riemannian geodesic flows on two-step nilmanifolds with a left-invariant metric, showing that for certain types of two-step nilpotent Lie groups (including Heisenberg groups), the vectors tangent to smoothly closed unit speed geodesics in the corresponding nilmanifold are dense in the unit tangent bundle \cite{eberlein1994geometry}; Mast \cite{mast1994closedgeods} and Lee-Park \cite{leepark1996density} broadened this result. In Theorem \ref{thm:density of closed mag geods}, we show that the density property continues to hold for magnetic flows on sufficiently high energy levels on the Heisenberg group. The second is a marked length spectrum rigidity result (see Section \ref{sec:rigidity} for the definition). It known that within certain classes of Riemannian manifolds, if two have the same marked length spectrum then they are isometric. This is true in the class of negatively curved surfaces (see \cite{croke1990rigidity} and \cite{otal1990commentarii, otal1990annals}) and compact flat manifolds (see \cite{berger1971lecturenotes}, \cite{berard1986spectralgeom}, \cite{miatellorossetti2003}). In \cite{grognet2005marked}, S. Grognet studies marked length spectrum rigidity of magnetic flows on surfaces with pinched negative curvature. In Theorem \ref{thm:MLS rigidity}, we show that the marked magnetic magnetic length spectrum of left-invariant magnetic systems on compact quotients of the Heisenberg group determine the Riemannian metric. Although it's a perturbation of geodesic flow, the magnetic flow still carries information about the underlying Riemannian manifold.

This paper is organized as follows. In Section \ref{sec:preliminaries}, we present the necessary preliminaries in order to state and prove the main theorems. The definition and basic properties of magnetic flows are given in Section \ref{sec:mag flows} and the necessary background on nilmanifolds is given in Section \ref{sec:geoemtry of two-step nilpotent lie groups}. Next, we show how a left-invariant Hamiltonian system on the cotangent bundle of a Lie group reduces to a so-called Euler flow on the dual to the Lie algebra. Such Hamiltonians are known as collective Hamiltonians, and this process is outlined in Section \ref{sec:left-inv hamiltonian on Lie groups}. Section \ref{sec:exact, left-inv mag forms} specializes the preceding to the case of exact, left-invariant magnetic flows on two-step nilpotent Lie groups. In Section \ref{sec:magnetic geodesic equations on simply connected 2n+1 dim Heis}, the magnetic geodesic equations on the $(2n+1)$-dimensional Heisenberg group are solved. In Section \ref{sec:compact quotients}, we apply these formulas to obtain our main theorem and the applications described above. Many geometric results for the Heisenberg group have been shown to hold for the larger class of Heisenberg type manifolds. In Section \ref{sec:HT manifolds}, we use a specific example to show why our analysis of magnetic flows on Heisenberg type manifolds is considerably more difficult. Lastly, the so-called $j$-maps are a central part of the theory of two-step Riemannian nilmanifolds. In the appendix, we provide an alternative approach to studying the magnetic geodesics using $j$-maps instead of collective Hamiltonians.

\section{Preliminaries} \label{sec:preliminaries}

\subsection{Magnetic flows} \label{sec:mag flows}

A \emph{magnetic structure} on a Riemannian manifold $\left(M,g\right)$
is a choice of closed $2$-form $\Omega$ on $M$, called the \emph{magnetic
$2$-form}. The \emph{magnetic flow} of $\left(M,g,\Omega\right)$
is the Hamiltonian flow $\Phi_{t}$ on $TM$ determined by the symplectic
form 
\begin{align} \label{eq:magnetic symplectic form}
\varpi_\mymag = \bar\varpi + \pi^*\Omega
\end{align}
and the kinetic energy Hamiltonian $H_0:TM\rightarrow \bb{R},$ given by
\begin{align}
H_0 \left( v \right)=\frac{1}{2}g\left(v,v\right) = \frac{1}{2}|v|^2 \text{.}
\end{align}
Here $\pi:TM \rightarrow M$ denotes the  canonical projection and
$\bar\varpi$ denotes the pullback via the Riemmanian metric of the canonical symplectic form
on $T^*M$. 

The magnetic flow models the motion of a charged particle under the effect of a magnetic field whose Lorentz force $F:TM\rightarrow TM$
is the bundle map defined via
\begin{align*}
\Omega_{x}\left(u,v\right)=g_{x}\left(F_{x}u,v\right)    
\end{align*}
for all $x\in M$ and all $u,v\in T_{x}M$. The orbits of the magnetic flow have the form $t\mapsto\dot{\sigma}\left(t\right)$, where $\sigma$ is a curve in $M$ such that 
\begin{equation} \label{eq:magnetic geodesic equation}
\nabla_{\dot{\sigma}}\dot{\sigma}=F\dot{\sigma}\text{.}
\end{equation}
In the case that $\Omega=0$, the magnetic flow reduces to Riemannian geodesic flow. A curve $\sigma$ that satisfies \eqref{eq:magnetic geodesic equation} is called a \emph{magnetic geodesic}. The physical interpretation of a magnetic geodesic is that it is the path followed by a particle with unit mass and charge under the influence of the magnetic field. Because $F$ is skew-symmetric, the acceleration of the magnetic geodesic is perpendicular to its velocity.

\begin{rmk}
It is straightforward to show that magnetic geodesics have constant
speed. In contrast to the Riemannian setting, a unit speed reparametrization
of a solution to \eqref{eq:magnetic geodesic equation} may no longer be a solution.
To see this, let $\sigma(s)$ be a solution that is not unit speed
and denote energy $E=\left\vert \dot{\sigma}\right\vert >0.$ Define
$\tau\left(s\right)=\sigma\left(s/E\right)$, which is unit speed.
Then 
\begin{align*}
\nabla_{\dot{\tau}}\dot{\tau}=\frac{1}{E^{2}}\nabla_{\dot{\sigma}}\dot{\sigma}=\frac{1}{E^{2}}F\dot{\sigma}=\frac{1}{E}F\dot{\tau}\neq F\dot{\tau}\text{,}    
\end{align*}
in general. Therefore, one views a magnetic geodesic as the path,
not the parameterized curve. (Observe that $\tau$ is a solution
to the magnetic flow determined by the magnetic form $\frac{1}{\sqrt{E}}\Omega$.)
\end{rmk}

Recall that the tangent and cotangent bundles of a Riemannian manifold are canonically identified, and the Riemannian metric on $TM \to M$ induces a non-degenerate, symmetric 2-tensor on $T^*M \to M$. We will present most of the theory in the setting of the cotangent bundle, while occasionally indicating how to translate to the tangent bundle.  Note that many authors use the tangent bundle approach. See for example \cite{burns2006rigidity}. 

Slightly abusing notation, we now let $\pi$ denote the basepoint map of the cotangent bundle, let $g$ denote the metric on the cotangent bundle, and define $H_0 : T^*M \to \bb{R}$ as $H_0(p) = \frac{1}{2} g(p,p) = \frac{1}{2} |p|^2$. Accordingly, the magnetic flow of $(M,g,\Omega)$ is the Hamiltonian flow $\Phi_t$ on the symplectic manifold $(T^*M, \varpi + \pi^* \Omega)$ determined by the Hamiltonian $H_0$. Regardless of  approach, the projections of the orbits to the base manifold will be the same magnetic geodesics determined by \eqref{eq:magnetic geodesic equation}.

On the cotangent bundle
\begin{align} \label{eq:cotangent magnetic symplectic form}
\varpi_\mymag = \varpi + \pi^*\Omega
\end{align}
defines a symplectic form as long as $\Omega$ is closed; $\Omega$ may be non-exact or exact. In the former case, $\Omega$ is referred to as a \emph{monopole}. In the latter case, when $\Omega$ is exact, the magnetic flow can be realized either as the Euler-Lagrange flow of an appropriate Lagrangian, or (via the Legendre transform) as a Hamiltonian flow on $T^\ast M$ endowed with its canonical symplectic structure. Note that even if two magnetic fields represent the same cohomology class, they generally determine distinct magnetic flows.

Suppose that $\Omega = d\theta$ for some 1-form $\theta$. A computation in local coordinates shows that the diffeomorphism $f:T^*M \to T^*M$ defined by $f(x,p) = (x,p-\theta_x)$ conjugates the Hamiltonian flow of $(T^*M, \varpi + \pi^{\ast}\Omega, H_0)$ with the Hamiltonian flow of $(T^*M, \varpi, H_1)$ where
\begin{align} \label{eq:modified Hamiltonian}
H_1(x,p) = \frac{1}{2}|p + \theta_x|^2.
\end{align}

\subsection{Example: Magnetic Geodesics in the Euclidean Plane} \label{sec:Euclidean plane example}

Before introducing two-step nilmanifolds in the following subsection, we first provide an example of a left-invariant magnetic system in a simpler context. 

Let $M = \bb{R}^2$ endowed with the standard Euclidean metric $g$. Let $\Omega = B \ dx \wedge dy$ denote a magnetic 2-form, where $(x,y)$ denote global coordinates and $B$ is a real parameter that can be interpreted as modulating the strength of the magnetic field. 

Let $\sigma_{v}\left(t\right) = (x(t),y(t))$ denote the magnetic geodesic through the identity $e=(0,0)$ with initial velocity $v = (x_0,y_0) = x_{0} \frac{\partial}{\partial x} + y_{0} \frac{\partial}{\partial y} \neq 0$ and energy $E=\sqrt{x_{0}^{2}+y_{0}^{2}}$. The Lorentz force $F$ satisfies $F \left( 1, 0 \right) = B (0,1)$ and $F \left( 0, 1 \right) = -B (1, 0)$. By \eqref{eq:magnetic geodesic equation} $\sigma_v(t)$ satisfies
\begin{align*}
    \left(\ddot{x},\ddot{y}\right)=F\left(\dot{x},\dot{y}\right)=B\left(-\dot{y},\dot{x}\right)\text{.}
\end{align*}
The unique solution satisfying $\sigma_v(0) = e$ and $\dot{\sigma}_v(0) = v$ is
\begin{align*}
x\left(t\right) & = -\frac{y_{0}}{B}\left(1-\cos\left(tB\right)\right)+\frac{x_{0}}{B}\sin\left(tB\right) \\
y\left(t\right) & = \frac{x_{0}}{B}\left(1-\cos\left(tB\right)\right)+\frac{y_{0}}{B}\sin\left(tB\right)\text{.}    
\end{align*}
Then $\sigma_{v}\left(t\right)$ is a \emph{circle} of radius $\frac{E}{|B|}$ and center $\left(-\frac{y_{0}}{B},\frac{x_{0}}{B}\right)$. It is immediate that magnetic geodesics cannot be reparameterized. For if $\sigma_{v'}(t)$ is another magnetic geodesic through the identity with $v'$ parallel to $v$ but with $|v| \neq |v'|$, then $\sigma_{v'}(t)$ will describe a circle of different radius. Furthermore magnetic geodesics are not even time-reversible. The magnetic geodesic $\sigma_{-v}\left(t\right)$ is a circle of radius $\frac{E}{|B|}$ and center $\left(\frac{y_{0}}{B},-\frac{x_{0}}{B}\right)$; in particular, $\sigma_{-v}\left(t\right)$ and $\sigma_{v}\left(t\right)$ are both circles of the same radius but trace different paths. Note that every magnetic geodesic in this setting is periodic. This will not be the case for two-step nilmanifolds.

\subsection{Left-invariant Hamiltonians on Lie groups} \label{sec:left-inv hamiltonian on Lie groups} Let $G$ be a Lie group with Lie algebra $\mf{g}$. On the one hand, $T^*G$ ( $=G\times \mathfrak{g}^*$) is a symplectic manifold and each function $H : T^*G \to \bb{R}$ generates a Hamiltonian flow with infinitesimal generator $X_H$. On the other hand, $\mf{g}^*$ is a Poisson manifold and each function $f : \mf{g}^* \to \bb{R}$ determines a derivation of $C^\infty(\mf{g}^*)$ and hence a vector field $E_f$, called the Euler vector field associated to $f$. When the function $H$ is left-invariant, i.e. $H((L_x)^* \alpha) = H(\alpha)$ for all $x \in G$ and all $\alpha \in T^*G$, it induces a function $h : \mf{g}^* \to \bb{R}$ and the flow of $X_H$ factors onto the flow of $E_h$. Moreover, the flow of $X_H$ can be reconstructed from $E_h$ and knowledge of the group structure of $G$. Note that this is a special case of a more general class of Hamiltonians, called collective Hamiltonians. More details and physical motivation can be found in Sections 28 and 29 of \cite{guillemin1990symplectic}. We outline below how we will use this approach to study magnetic flows. 

A Poisson manifold is a smooth manifold $M$ together with a Lie bracket $\{ \cdot, \cdot \}$ on the algebra $C^\infty(M)$ that also satisfies the property
\begin{align} \label{eq:Poisson Leibniz rule}
\{f, gh\} = \{f,g\}h + g\{f, h\}
\end{align}
for all $f,g,h \in C^\infty(M)$. Hence, for a fixed function $h \in C^\infty(M)$, the map $C^\infty(M) \to C^\infty(M)$ defined by $f \mapsto \{ f, h \}$ is a derivation of $C^\infty(M)$. Therefore, there is an Euler vector field $E_h$ on $M$ such that $E_h ( \cdot ) = \{ \cdot, h \}$. 

An important source of Poisson manifolds is the vector space dual to a Lie algebra. We will make use of the standard identifications $T_p \mf{g}^* \simeq \mf{g}^*$ and $T^*_p\mf{g}^* \simeq (\mf{g}^*)^* \simeq \mf{g}$, and $\langle \ \cdot \ , \ \cdot \ \rangle$ will denote the natural pairing between $\mf{g}$ and $\mf{g}^*$. For a function $f \in C^\infty(\mf{g}^*)$, its differential $df_p$ at $p \in \mathfrak{g}^*$ is identified with an element of the Lie algebra $\mf{g}$. The Lie bracket structure on $\mf{g}$ induces the Poisson structure on $\mf{g}^*$ by
\begin{align} \label{eq:Poisson structure on g^*}
\{ f, g \}(p) = - \langle p, \left[df_p, dg_p\right] \rangle= -p\left(\left[df_p,dg_p\right]\right).
\end{align}
Antisymmetry and the Jacobi Identity follow from the properties of the Lie bracket $[ \ \cdot \ , \ \cdot \ ]$, while the derivation property \eqref{eq:Poisson Leibniz rule} follows from the Leibniz rule for the exterior derivative.

It is useful to express the Euler vector field $E_h$ in terms of $h$ and the representation $\ad^* : \mf{g} \to \mf{gl}(\mf{g}^*)$ dual to the adjoint representation, defined as 
\begin{align} \label{eq:Lie algebra dual}
\langle \ad^*_X p, Y \rangle = - \langle p, \ad_X Y \rangle. 
\end{align}
From the definition of the differential of a function,
\begin{align*}
\langle E_h(p), df_p \rangle = E_h(f)(p) = \{ f, h \}(p) = - \langle p, [df_p, dh_p] \rangle = - \langle \ad^*_{dh_p} p, df_p \rangle.
\end{align*}
From this we conclude that
\begin{align} \label{eq:Euler v.f. on Lie algebra dual}
E_h(p) = - \ad^*_{dh_p} p.
\end{align}

Now consider $T^*G \simeq G \times \mf{g}^*$ trivialized via left-multiplication. Let $r: G \times \mf{g}^* \to \mf{g}^*$ be projection onto the second factor. If $h: \mf{g}^* \to \bb{R}$ is any smooth function, then $H = h \circ r$ is a left-invariant Hamiltonian on $T^*G$. Conversely, any left-invariant Hamiltonian $H$ factors as $H = h \circ r$. Recall that the canonical symplectic structure  $\varpi$ on $T^*G \simeq G \times \mf{g}^*$ is
\begin{align} \label{eq:canon symp structure on Lie group}
\varpi_{(x,p)} ( (U_1, \alpha_1), (U_2, \alpha_2) ) = \alpha_2 (U_1) - \alpha_1(U_2) + p([U_1, U_2])
\end{align}
where we identify $T_{(g,p)}T^*G \simeq \mf{g} \times \mf{g}^*$ (see section 4.3 of \cite{eberlein2004Cubo} for more details). To find an expression for the Hamiltonian vector field $X_H(x,p) = (X,\lambda)$ of a left-invariant Hamiltonian, first consider vectors of the form $(0,\alpha)$ in the equation $\varpi( X_H,\dotarg) = dH(\dotarg)$. We have
\begin{align*}
\varpi_{(x,p)} ( (X, \lambda), (0, \alpha) ) &= dH_{(x,p)}(0,\alpha) =d(h\circ r)_{(x,p)}(0,\alpha), \\
\alpha (X) - \lambda(0) + p([X, 0])  &= dh_p(\alpha), \\
\alpha (X) &= \alpha( dh_p ).
\end{align*}
Since this is true for all choices of $\alpha$, we get $X = dh_p$. Next consider vectors of the form $(U,0)$. Since $H$ is left-invariant,
\begin{align*}
\varpi_{(x,p)}((dh_p, \lambda), (U,0)) &= dH_{(x,p)}(U,0), \\
- \langle \lambda, U \rangle + \langle p, [dh_p, U] \rangle &= 0, \\
\langle \lambda, U \rangle &= - \langle \ad_{dh_p}^* p, U \rangle.
\end{align*}
Since this must be true for every $U$, we have that $\lambda = -\ad_{dh_p}^* p = E_h(p)$. For a left-invariant Hamiltonian, the equations of motions for its associated Hamiltonian flow are
\begin{align} \label{eq:equations of motion for left-inv ham}
X_H(x,p) = \begin{cases}
\dot{x} = (L_x)_*(dh_p) \\
\dot{p} = E_h(p) = - \ad^*_{dh_p} p
\end{cases}.
\end{align}

\subsection{The Geometry of Two-Step Nilpotent Metric Lie Groups} \label{sec:geoemtry of two-step nilpotent lie groups} Our objects of study in this paper are simply connected two-step nilpotent Lie groups endowed with a left-invariant metric.  For an excellent reference regarding  the geometry of these manifolds, see  \cite{eberlein1994geometry}. 

Let $\mf{g}$ denote a two-step nilpotent Lie algebra with Lie bracket $[\ ,\ ]$ and non-trivial center $\mf{z}$. That is, $\mf{g}$ is nonabelian and $[X,Y]\in\mf{z}$ for all $X,Y\in\mf{g}$. Let $G$ denote the unique, simply connected Lie group with Lie algebra $\mf{g}$; then $G$ is a two-step nilpotent Lie group. The Lie group exponential map $\exp : \mf{g} \to G$ is a diffeomorphism, with  inverse map   denoted by $\log : G \to \mf{g}$. Using the Campbell-Baker-Hausdorff formula, the multiplication law can be expressed as
\begin{align} \label{eq:multiplication law}
    \exp(X)\exp(Y) = \exp \left( X + Y + \frac{1}{2}[X,Y] \right).
\end{align}
For any $A \in \mf{g}$ and any $X \in T_A \mf{g} \simeq \mf{g}$, the push-forward of the Lie group exponential at $A$ is
\begin{align*}
    (\exp_*)_A (X) = (L_{\exp(A)})_*\left( X + \frac{1}{2}[X,A] \right).
\end{align*}
Using this, the tangent vector to any smooth path $\sigma(t) = \exp(U(t))$ in $G$ is given by
\begin{align} \label{eq:tangent vector field of path in two-step nilpotent group}
    \sigma'(t) =(L_{\sigma(t)})_* \left( U'(t) + \frac{1}{2}[U'(t),U(t)] \right).
\end{align}
When a two-step nilpotent Lie algebra $\mf{g}$ is endowed with an inner product $g$, then there is a natural decomposition $\mf{g} = \mf{v} \oplus \mf{z}$, where $\mf{z}$ is the center of $\mf{g}$ and $\mf{v}$ is the orthogonal complement to $\mf{z}$ in $\mf{g}$. Every central vector $Z \in \mf{z}$ determines a skew-symmetric linear transformation of $\mf{v}$ (relative to the restriction of $g$), denoted $j(Z)$, as follows:
\begin{align} \label{eq:j-map def}
    g(j(Z)V_1, V_2) = g([V_1, V_2],Z)
\end{align}
for any vectors $V_1, V_2 \in \mf{v}$. In fact, this correspondence is a linear map $j : \mf{z} \to \mf{so}(\mf{v})$. These maps, first introduced by Kaplan \cite{kaplan1981Cliffordmodules}, capture all of the geometry of a two-step nilpotent metric Lie group.  For example, the $j$-maps provide a very useful description of the Levi-Civita connection. For $V_1, V_2 \in \mf{v}$ and $Z_1, Z_2 \in \mf{z}$,
\begin{align}
    & \nabla_{X_1}X_2 = \frac{1}{2}[X_1, X_2], \nonumber \\ 
    & \nabla_{X_1}Z_1 = \nabla_{Z_1} X_1 = - \frac{1}{2}j(Z) X, \label{eq:Levi-Civita eqns} \\ 
    & \nabla_{Z_1}Z_2 = 0. \nonumber 
\end{align}

\subsection{Exact, Left-Invariant Magnetic Forms on Simply Connected Two-Step Nilpotent Lie Groups} \label{sec:exact, left-inv mag forms}

We use the formalism of  Subsection \ref{sec:left-inv hamiltonian on Lie groups} to express the equations of motion for the magnetic flow of an exact, left-invariant magnetic form on a simply connected two-step nilpotent Lie group. Throughout this section, $\mf{g}$ denotes a two-step nilpotent Lie algebra with an inner product and $G$ denotes the simply connected Lie group with Lie algebra $\mf{g}$ endowed with the left-invariant Riemannian metric determined by the inner product on $\mf{g}$. 

As a reminder, angled brackets denote the natural pairing of a vector space and its dual. Recall that any (finite dimensional) vector space $V$ is naturally identified with $V^{**}$ by sending any vector $v \in V$ to the linear functional $V^* \mapsto \bb{R}$ defined by evaluation on $v$. Using this identification, we can and do view elements of $V$ simultaneously as elements of $V^{**}$. The inner product on $\mf{g}^*$ is specified by a choice of linear map $\sharp: \mf{g}^* \to \mf{g}$ such that (a) $\langle p, \sharp(p) \rangle > 0$ for all $p \neq 0$ and (b) $\langle p, \sharp(q) \rangle = \langle \sharp(p), q \rangle$ for all $p,q \in \mf{g}^*$. The inner product of $p, q \in \mf{g}^*$ is then given by $\langle p, \sharp(q) \rangle$ and the induced norm is $|p| = \sqrt{\langle p, \sharp(p) \rangle}$. Conversely any inner product on $\mf{g}^*$ induces a map $\sharp : \mf{g}^* \to \mf{g}^{**} \simeq \mf{g}$ with the properties (a) and (b). Of course, $\sharp^{-1} = \flat$ is then the flat map and the inner product of $X$ and $Y$ in $\mf{g}$ can be computed as $\langle X, \flat(Y) \rangle$. 

Let $\mf{g} = \mf{v} \oplus \mf{z}$ be the decomposition of $\mathfrak{g}$ into the center and its orthogonal complement. Let $\mf{g}^* = \mf{v}^* \oplus \mf{z}^*$ be the corresponding decomposition where $\mf{v}^*$ is the set of functionals that vanish on $\mf{z}$ and vice versa.

\begin{lem} \label{lem:every exact left-invariant 2-form is d of something central}
If $\Omega$ is an exact, left-invariant 2-form on $G$, then there exists $B\in \bb{R}$ and $\zeta_m \in \mf{z}^*$ such that $|\zeta_m| = 1$ and $\Omega = d(B\zeta_m)$.
\end{lem}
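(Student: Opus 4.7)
The plan is to translate the statement into Lie algebra data and exploit the fact that, for a two-step nilpotent Lie algebra, the Chevalley--Eilenberg differential annihilates every element of $\mf{v}^* \subset \mf{g}^*$. Under the standard identification of left-invariant $k$-forms on $G$ with elements of $\Lambda^k \mf{g}^*$, I would begin by extracting from the exactness hypothesis a left-invariant 1-form $\theta \in \mf{g}^*$ with $d\theta = \Omega$.

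Writing $\theta = \theta_v + \theta_z$ according to the decomposition $\mf{g}^* = \mf{v}^* \oplus \mf{z}^*$, I would then apply the Chevalley--Eilenberg formula $(d\alpha)(X,Y) = -\alpha([X,Y])$. Because $\mf{g}$ is two-step nilpotent, $[X,Y] \in \mf{z}$ for all $X,Y \in \mf{g}$; since by construction $\theta_v$ annihilates $\mf{z}$, this forces $d\theta_v = 0$. Consequently $\Omega = d\theta_z$ with $\theta_z \in \mf{z}^*$, and it remains only to normalize: take $B = |\theta_z|$ and $\zeta_m = \theta_z/B$ if $\theta_z \neq 0$, and $B=0$ together with an arbitrary unit $\zeta_m \in \mf{z}^*$ otherwise.

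The main subtlety, and in my view the only nontrivial step, is the extraction of a \emph{left-invariant} primitive from the exactness hypothesis. Since $G$ is contractible, every closed left-invariant 2-form admits a global primitive, but such a primitive need not be left-invariant in general (for instance, in the 3D Heisenberg group the left-invariant form dual to $\xi_1 \wedge \zeta$, with $\xi_1$ and $\zeta$ the duals of a generator of $\mf{v}$ and of $\mf{z}$, is closed and globally exact yet does not lie in $d(\mf{g}^*)$). I would therefore read the hypothesis ``exact'' as asserting the existence of a left-invariant primitive, which is the natural input for the collective Hamiltonian machinery developed in Section \ref{sec:left-inv hamiltonian on Lie groups}; once this reading is in place, the remainder of the argument is essentially a one-line computation.
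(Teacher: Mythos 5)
Your proof is correct and follows essentially the same route as the paper's: decompose a left-invariant primitive $\theta=\theta_{\mf{v}}+\theta_{\mf{z}}$, observe that $d\theta_{\mf{v}}(X,Y)=-\theta_{\mf{v}}([X,Y])=0$ because $[\mf{g},\mf{g}]\subset\mf{z}$, and normalize $\theta_{\mf{z}}$ (your explicit handling of the case $\theta_{\mf{z}}=0$ is a small improvement, since the paper's formula $\zeta_m=\theta_{\mf{z}}/|\theta_{\mf{z}}|$ is undefined there). The paper's proof likewise opens with ``$\Omega=d\theta$ for some left-invariant 1-form $\theta$,'' so your reading of ``exact'' as exactness in the left-invariant (Chevalley--Eilenberg) complex is precisely the intended one, and your example of a closed left-invariant 2-form such as $\alpha\wedge\zeta$ on the three-dimensional Heisenberg group --- globally exact on the contractible $G$ yet with no left-invariant primitive and not of the form $d(B\zeta_m)$ --- correctly shows that this reading is needed for the statement to hold as written.
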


\begin{proof}
By hypothesis, $\Omega = d\theta$ for some left-invariant 1-form $\theta$. By left-invariance, $\theta$ can be expressed as $\theta = \theta_\mf{v} + \theta_\mf{z}$, where $\theta_\mf{v} \in \mf{v}^*$ and $\theta_\mf{z} \in \mf{z}^*$, and  $d\theta_\mf{v}(X,Y) = -\theta_\mf{v}([X,Y]) $ for any $X,Y \in \mf{g}$. Because $[X,Y] \in \mf{z}$, $d\theta_\mf{v} = 0$. Hence
\begin{align*}
    \Omega = d\theta = d(\theta_\mf{v} + \theta_\mf{z}) = d\theta_\mf{z}.
\end{align*}
Lastly, set $\zeta_m = \theta_\mf{z} / |\theta_\mf{z}|$ and $B = |\theta_\mf{z}|$.
\end{proof}

Given $B\in\mathbb{R}$ and $\zeta_m\in\mf{g}^*$, we define the function $H:T^*G \to \bb{R}$ by
\begin{align} \label{eq:magnetic Hamiltonian}
    H(x,p) = \frac{1}{2}|p + B\zeta_m|^2.
\end{align}
By the previous lemma, we may assume $\zeta_m$ is  a unit element in $\mf{g}^*$ that vanishes on $\mf{v}$. Because $\zeta_m$ is left-invariant, $H$ is left-invariant and factors as $H = h \circ r$, where $h : \mf{g}^* \to \bb{R}$ is the function
\begin{align} \label{eq:reduced magnetic Hamiltonian}
    h(p) = \frac{1}{2}|p + B\zeta_m|^2.
\end{align}
Note that when $B = 0$, the Hamiltonian flow of $H$ is the geodesic flow of the chosen Riemannian metric.

\begin{lem} \label{lem:differential of magnetic hamiltonian}
The differential of $h$ is $dh_p = \sharp(p + B \zeta_m)$.
\end{lem}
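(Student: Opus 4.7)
The plan is to compute the differential $dh_p$ directly from its definition, and then use the symmetry and linearity of the musical isomorphism $\sharp$ to identify it with the claimed element of $\mathfrak{g}$. Since $\mathfrak{g}^*$ is a vector space, $T_p \mathfrak{g}^* \simeq \mathfrak{g}^*$, so $dh_p$ is naturally an element of $(\mathfrak{g}^*)^* \simeq \mathfrak{g}$ under the pairing $\langle \cdot, \cdot \rangle$, which matches the codomain claimed in the lemma.

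First, I would pick an arbitrary tangent vector $q \in T_p \mathfrak{g}^* \simeq \mathfrak{g}^*$ and differentiate along the straight line $p + tq$:
\begin{align*}
dh_p(q) = \frac{d}{dt}\Big|_{t=0} h(p + tq) = \frac{1}{2}\frac{d}{dt}\Big|_{t=0} \langle p + tq + B\zeta_m, \sharp(p + tq + B\zeta_m) \rangle.
\end{align*}
Expanding by bilinearity and using property (b) of $\sharp$ (the symmetry $\langle p, \sharp(q)\rangle = \langle \sharp(p), q\rangle$), the cross terms in $t$ combine so that the derivative at $t=0$ equals $\langle q, \sharp(p + B\zeta_m)\rangle$.

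The final step is to interpret this result. Viewing $dh_p$ as an element of $\mathfrak{g}$ via the identification $T_p^*\mathfrak{g}^* \simeq \mathfrak{g}^{**} \simeq \mathfrak{g}$ explained in Section \ref{sec:exact, left-inv mag forms}, the linear functional $q \mapsto \langle q, \sharp(p + B\zeta_m)\rangle$ on $\mathfrak{g}^*$ corresponds precisely to the element $\sharp(p + B\zeta_m) \in \mathfrak{g}$, yielding the claim.

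There is no real obstacle here; the only thing to be careful about is keeping the identifications $V \simeq V^{**}$ straight and correctly invoking the symmetry of $\sharp$ when expanding the squared norm. Everything else is a routine application of the Leibniz rule in a flat vector space.
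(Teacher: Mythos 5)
Your proof is correct and follows essentially the same route as the paper: differentiate $h(p+tq)$ at $t=0$, expand the square using bilinearity and the symmetry property (b) of $\sharp$, and identify the resulting linear functional with $\sharp(p+B\zeta_m)$ under the identification $\mathfrak{g}^{**}\simeq\mathfrak{g}$. No gaps.
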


\begin{proof}
For any $p \in \mf{g}$ and any $q \in T_p\mf{g}^* \simeq \mf{g}^*$, we compute
\begin{align*}
    \langle q, dh_p \rangle &= \frac{d}{dt}\bigg|_{t = 0} h(p + tq) \\
    &= \frac{1}{2} \frac{d}{dt}\bigg|_{t = 0} |p + tq + B\zeta_m |^2 \\
    &= \frac{1}{2} \frac{d}{dt}\bigg|_{t = 0} \langle p + B\zeta_m + tq, \sharp(p + B\zeta_m + tq) \rangle \\
    &= \frac{1}{2} \frac{d}{dt}\bigg|_{t = 0} \left( | p + B\zeta_m |^2 + 2t\langle p + B\zeta_m , \sharp(q) \rangle + t^2 |q|^2 \right) \\
    &= \langle p + B\zeta_m, \sharp(q) \rangle.
\end{align*}
The Lemma now follows from the properties of $\sharp$.
\end{proof}

We now prove that the Euler vector field on $\mf{g}^*$ is independent of the choice of exact magnetic field, including the choice $\Omega = 0$.

\begin{lem} \label{lem:independent of B}
Let $h \in C^\infty(\mf{g}^*)$ be any function of the form \eqref{eq:reduced magnetic Hamiltonian} and define the function $h_0 \in C^\infty(M)$ by $h_0(p) = \frac{1}{2}|p|^2$. Then $E_{h_0} = E_h$.
\end{lem}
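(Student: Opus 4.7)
The plan is to combine the explicit formula \eqref{eq:Euler v.f. on Lie algebra dual} for the Euler vector field with the computation of $dh_p$ from Lemma \ref{lem:differential of magnetic hamiltonian}, and then exploit the fact that $\mathfrak{z}$ is central to kill the term involving $B$. Specifically, I would start by writing
\begin{align*}
E_h(p) = -\ad^*_{dh_p} p = -\ad^*_{\sharp(p + B\zeta_m)} p = -\ad^*_{\sharp(p)} p - B\,\ad^*_{\sharp(\zeta_m)} p,
\end{align*}
using linearity of $\sharp$ and of $\ad^*$ in the subscript. The analogous computation with $B=0$ gives $E_{h_0}(p) = -\ad^*_{\sharp(p)} p$.

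Next I would verify that $\sharp(\zeta_m) \in \mathfrak{z}$. This follows from the fact that the orthogonal decomposition $\mathfrak{g} = \mathfrak{v} \oplus \mathfrak{z}$ is dual to the decomposition $\mathfrak{g}^* = \mathfrak{v}^* \oplus \mathfrak{z}^*$ under $\sharp$: for any $Z \in \mathfrak{z}$, the functional $\flat(Z)$ vanishes on $\mathfrak{v}$ because $g(Z,V) = 0$ for all $V \in \mathfrak{v}$, so $\flat$ sends $\mathfrak{z}$ into $\mathfrak{z}^*$. Since $\flat$ is a bijection and $\dim \mathfrak{z} = \dim \mathfrak{z}^*$, we get $\sharp(\mathfrak{z}^*) = \mathfrak{z}$, and in particular $\sharp(\zeta_m) \in \mathfrak{z}$.

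Finally, because $\mathfrak{g}$ is two-step nilpotent with center $\mathfrak{z}$, any $Z \in \mathfrak{z}$ satisfies $\ad_Z = 0$ on $\mathfrak{g}$, hence $\ad^*_Z = 0$ on $\mathfrak{g}^*$. Applying this with $Z = \sharp(\zeta_m)$ makes the $B$-term in the display above vanish, yielding $E_h(p) = -\ad^*_{\sharp(p)} p = E_{h_0}(p)$, as required.

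I do not expect any real obstacle here: the only thing to be slightly careful about is confirming that $\sharp$ respects the decomposition into $\mathfrak{v}$ and $\mathfrak{z}$ pieces, which is purely a matter of unwinding definitions. The main content of the lemma is the observation that, despite appearances, the Euler flow on $\mathfrak{g}^*$ does not see the magnetic parameter $B$; the $B$-dependence is pushed entirely into the reconstruction of the Hamiltonian flow on $T^*G$ via \eqref{eq:equations of motion for left-inv ham}, namely into $\dot{x} = (L_x)_*(dh_p)$.
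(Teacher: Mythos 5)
Your proposal is correct and follows essentially the same route as the paper: both arguments reduce to showing that $\sharp(\mf{z}^*) = \mf{z}$ (you via $\flat(\mf{z})\subseteq\mf{z}^*$ plus a dimension count, the paper via the dual pairing computation $\la V, \flat(\sharp(\zeta))\ra = \la V,\zeta\ra = 0$) and then observing that $\sharp(\zeta_m)$, being central, contributes nothing to $\ad^*_{dh_p}$. The only cosmetic difference is that you split off the $B$-term by linearity of $\ad^*$ while the paper kills it inside the bracket $[\sharp(p+B\zeta_m),X]=[\sharp p,X]$; these are the same calculation.
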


\begin{proof}
For any $\zeta \in \mf{z}^*$ and any $V \in \mf{v}$, $\langle V, \flat(\sharp(\zeta)) \rangle = \langle V, \zeta \rangle = 0$ shows that $\sharp(\mf{z}^*) = \mf{z}$. For any $X \in \mf{g}$,
by the previous lemma,
\begin{align*}
    \langle \ad^*_{dh_p} p, X \rangle = - \langle p, [\sharp(p + B\zeta_m), X] \rangle = - \langle p, [\sharp p, X] \rangle = \langle \ad^*_{(dh_0)_p} p, X \rangle.
\end{align*}
Hence $\ad^*_{dh_p} = \ad^*_{(dh_0)_p}$ and the proof follows from the expression \eqref{eq:Euler v.f. on Lie algebra dual} for the Euler vector field.
\end{proof}

We now describe the structure of the Euler vector field. Much of this can be gleaned from the results of \cite{eberlein1994geometry}. However, we include it here for the sake of self-containment. For any $X \in \mf{g}$ and $p \in \mf{g}^*$, we write $X = X_\mf{v} + X_\mf{z}$ and $p = p_\mf{v} + p_\mf{z}$ for the respective orthogonal decomposition according to $\mf{g} = \mf{v} \oplus \mf{z}$ and $\mf{g}^* = \mf{v}^* \oplus \mf{z}^*$. 

\begin{lem} \label{lem:structure of Euler vector field on two-step}
The integral curves of the Euler vector field $E_h$ are of the form $p(t) = p_\mf{v}(t) + \zeta_0$ where $\zeta_0 \in \mf{z}^*$ and $p_\mf{v}(t) \in \mf{v}^*$ is a path that satisfies $p_\mf{v}'(t) = A(p_\mf{v}(t))$ for some skew-symmetric transformation of $\mf{v}^*$.
\end{lem}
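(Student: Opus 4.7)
The plan is to compute $E_h(p) = -\ad^*_{dh_p} p$ explicitly after using Lemma \ref{lem:independent of B} to reduce to the case $B = 0$. With this reduction, Lemma \ref{lem:differential of magnetic hamiltonian} gives $dh_p = \sharp(p)$, so the integral curves of $E_h$ satisfy $\dot{p} = -\ad^*_{\sharp(p)} p$. The whole strategy rests on exploiting the two-step hypothesis in two places: $[\mf{z}, \mf{g}] = 0$ (so that the center-part of $\sharp(p)$ contributes nothing to $\ad_{\sharp(p)}$) and $[\mf{g}, \mf{g}] \subset \mf{z}$ (so that the image of $\ad_{\sharp(p)}$ is central).

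Decomposing $p = p_\mf{v} + p_\mf{z}$ and using $\sharp(p_\mf{z}) \in \mf{z}$, I would first show that $\ad_{\sharp(p)} Y = [\sharp(p_\mf{v}), Y_\mf{v}]$ for every $Y = Y_\mf{v} + Y_\mf{z} \in \mf{g}$. In particular $\ad_{\sharp(p)} Y \in \mf{z}$, so the pairing $\langle p, \ad_{\sharp(p)} Y\rangle$ sees only $p_\mf{z}$, and vanishes whenever $Y \in \mf{z}$. This gives $\ad^*_{\sharp(p)} p \in \mf{v}^*$, i.e.\ the $\mf{z}^*$-component of $\dot p$ is zero. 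Thus the $\mf{z}^*$-component of $p(t)$ is a constant $\zeta_0 \in \mf{z}^*$, establishing the first half of the lemma.

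For the second half, I would rewrite the evolution of $p_\mf{v}$ in terms of the $j$-map. For $Y_\mf{v} \in \mf{v}$,
\begin{align*}
\langle \dot p_\mf{v}, Y_\mf{v}\rangle
&= -\langle \zeta_0, [\sharp(p_\mf{v}), Y_\mf{v}]\rangle
= -g\bigl(\sharp(\zeta_0),\, [\sharp(p_\mf{v}), Y_\mf{v}]\bigr)
= -g\bigl(j(\sharp(\zeta_0))\sharp(p_\mf{v}),\, Y_\mf{v}\bigr),
\end{align*}
using the defining property \eqref{eq:j-map def} of $j$. Setting $A := -\flat \circ j(\sharp(\zeta_0)) \circ \sharp$, this reads $\dot p_\mf{v} = A(p_\mf{v})$, and $A$ is linear in $p_\mf{v}$ since $\zeta_0$ is fixed along the trajectory.

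The remaining step, which I expect to be entirely routine rather than the main obstacle, is verifying that $A$ is skew-symmetric with respect to the dual inner product on $\mf{v}^*$. This follows immediately from the skew-symmetry of $j(\sharp(\zeta_0))$ on $\mf{v}$ together with the fact that $\sharp: \mf{v}^* \to \mf{v}$ is the musical isometry. In fact, there is no real obstacle in this lemma — the only conceptual point is recognizing that the two-step condition forces $\dot p$ to lie in $\mf{v}^*$ and to depend linearly on $p_\mf{v}$ once $p_\mf{z}$ has been shown to be conserved; everything else is bookkeeping with the pairing $\langle\cdot,\cdot\rangle$ and the definition of $j$.
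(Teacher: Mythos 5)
Your argument is correct and follows essentially the same route as the paper: the two-step structure forces $\ad^*_{dh_p}p$ to lie in $\mf{v}^*$ and to depend only on $p_\mf{v}$ and the conserved central component $\zeta_0$, and your explicit identification $A = \pm\,\flat \circ j(\sharp(\zeta_0)) \circ \sharp$ via \eqref{eq:j-map def} in fact supplies the skew-symmetry verification that the paper's proof only asserts. The one slip is a sign: with the conventions \eqref{eq:Lie algebra dual} and \eqref{eq:Euler v.f. on Lie algebra dual}, $\langle \dot{p}, Y \rangle = -\langle \ad^*_{\sharp(p)}p, Y \rangle = +\langle \zeta_0, [\sharp(p_\mf{v}), Y_\mf{v}] \rangle$, so $A = +\,\flat \circ j(\sharp(\zeta_0)) \circ \sharp$; this is harmless, since either sign yields a skew-symmetric transformation and the lemma only asserts existence of one.
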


\begin{proof}
From \eqref{eq:Lie algebra dual},  the dual adjoint representation clearly has the following properties: $\ad^*_{Z} = 0$ for every $Z \in \mf{z}$, $\ad^*_X(\mf{g}^*) \subset \mf{v}^*$ for all $X \in \mf{g}$, and $\ad^*_X(\mf{v}^*) = \{ 0 \}$ for every $X \in \mf{g}$. From this, if $p(t) = p_\mf{v}(t) + p_\mf{z}(t)$ is an integral curve of $E_h$, then $p_\mf{z}(t) = p_\mf{z}(0) = \zeta_0$ is constant, and, using Lemmas \ref{lem:differential of magnetic hamiltonian} and  \ref{lem:independent of B}, $p_{\mf{v}}(t)$ must satisfy the system
\begin{align*}
    p'_{\mf{v}}(t) = E_h(p(t)) = -\ad^*_{dh_{p(t)}} p(t) = -\ad^*_{\sharp(p_{\mf{v}}(t))} p_\mf{z}(t) = -\ad^*_{\sharp(p_{\mf{v}}(t))} \zeta_0.
\end{align*}
Since $A:\mf{v}^* \to \mf{v}^*$ is skew-symmetric with respect to the inner product restricted to $\mf{v}^*$, this completes the Lemma.
\end{proof}

Let $(G,g,\Omega)$ be a magnetic system, where $G$ is a simply connected two-step nilpotent Lie group, $g$ is a left-invariant metric, and $\Omega$ an exact, left-invariant magnetic form. Let $\flat : \mf{g} \to \mf{g}^*$ and $\sharp = \flat^{-1}$ be the associated flat and sharp maps, and let $\zeta_m$ be as in Lemma \ref{lem:every exact left-invariant 2-form is d of something central}. The magnetic flow can be found as follows. First, compute the coadjoint representation of $\ad^* : \mf{g} \to \mf{gl}(\mf{g}^*)$ and integrate the vector field $E(p) = -\ad_{dh_p}^* p$. It follows that the curves $\sigma(t)$ satisfying $\sigma'(t) = dh_{p(t)}$, where $p(t)$ is an integral curve of $E$, will be magnetic geodesics. To make this step more explicit, let $\mf{g} = \mf{v} \oplus \mf{z}$ be the decomposition of $\mf{g}$ where $\mf{z}$ is the center and $\mf{v}$ is its orthogonal complement. Suppose that $p(t) = p_1(t) + \zeta_0$ is an integral curve of $E$, where $p_1(t) \in \mf{v}^*$ and $\zeta_0 \in \mf{z}^*$, and $\sigma(t) = \exp(\bfX(t) + \bfZ(t))$ is a path in $G$, where $\bfX(t) \in \mf{v}$ and $\bfZ(t) \in \mf{z}$. Using \eqref{eq:tangent vector field of path in two-step nilpotent group}, we can decompose the equation $\sigma'(t) = dh_{p(t)} = \sharp(p(t) + B\zeta_m)$ as
\begin{align} 
    & \bfX'(t) = \sharp(p_1(t)), \label{eq:mag goed equations v2a} \\
    & \bfZ'(t) + \frac{1}{2}[\bfX'(t),\bfX(t)] = \sharp(\zeta_0 + B \zeta_m). \label{eq:mag goed equations v2b}
\end{align}
Assuming that the path satisfies $\sigma(0) = e$, the first equation can be integrated to find $\bfX(t)$, which then allows the second equation to be integrated to find $\bfZ(t)$.

\begin{rmk}
The presence of the magnetic field can be thought of as a perturbation of the geodesic flow of $(G,g)$, modulated by the parameter $B$. In the procedure outlined here for two-step nilpotent Lie groups, the magnetic field only appears in the final step. The Euler vector field, and hence its integral curve, is unchanged by the magnetic field. In addition, the non-central component of the magnetic geodesics is the same as that of the Riemannian geodesics. The presence of a left-invariant exact magnetic field only perturbs the geodesic flow in central component of the Riemannian geodesics.
\end{rmk}

\begin{rmk} \label{rmk:how to calculate energy of mag geod from initial conditions}
For a magnetic geodesic $\sigma(t)$, we will call $|\sigma'(t)|$ its \emph{energy}. Note that this is a conserved quantity for magnetic flows. Since we are not considering a potential, the total energy of a charged particle in a magnetic system is its kinetic energy $|\sigma'(t)|^2 /2$. Although this would be commonly referred to as the energy in the physics and dynamics literature, we find our convention to be more convenient from our geometric viewpoint.
\end{rmk}

\begin{rmk} \label{rmk:general expression for energy squared}
Although $t \mapsto (\sigma(t), p(t))$ is an integral curve of the Hamiltonian vector field, the Hamiltonian $h$ is not the kinetic energy, and hence the energy of the magnetic geodesic is not equal to $|p(0)|$. Instead, by \eqref{eq:mag goed equations v2a} and \eqref{eq:mag goed equations v2b}, the energy squared is
\begin{align} \label{eq:energy of mag geod}
    |\sigma'(t)|^2 = |\sharp(p(0)) + B \sharp(\zeta_m)|^2 = |\sharp(p_1(0))|^2 + |\sharp(\zeta_0 + B \zeta_m)|^2.
\end{align}
\end{rmk}

\section{Simply Connected $(2n + 1)$-Dimensional Heisenberg Groups}

\label{sec:magnetic geodesic equations on simply connected 2n+1 dim Heis}

Let $\mf{h}_{n} = \myspan\{ X_1, \ldots, X_n, Y_1, \ldots, Y_n, Z\}$ and define a bracket structure on $\mf{h}_n$ by declaring the only nonzero brackets among the basis vectors to be $[X_i, Y_i] = Z$ and extending $[ \dotarg, \dotarg ]$ to all of $\mf{h}_n \times \mf{h}_n$ by bilinearity and skew-symmetry. Then $\mf{h}_n$ is a two-step nilpotent Lie algebra called the Heisenberg Lie algebra of dimension $2n + 1$, and the simply connected Lie group $H_n$ with Lie algebra $\mf{h}_n$ is called the Heisenberg group of dimension $2n+1$. Let $\{ \alpha_1, \beta_1, \ldots, \alpha_n, \beta_n, \zeta \}$ be the dual basis of $\mf{h}_n^*$. The following Lemma, proven in Lemma 3.5 of \cite{gordon1986spectrum}, shows that to consider every inner product on $\mf{h}_n$, we need only consider inner products on $\mf{h}_n$ that have a simple relationship to the bracket structure.

\begin{lem}
Let $g$ be any inner product on $\mf{h}_n$. There exists $\varphi \in \Aut(\mf{h}_n)$ such that
\begin{align} \label{eq:standard orthonomral basis for Heisenberg algebra}
    \left\{ \frac{X_1}{\sqrt{A_1}}, \ldots, \frac{X_n}{\sqrt{A_n}}, \frac{Y_1}{\sqrt{A_1}}, \ldots, \frac{Y_n}{\sqrt{A_n}}, Z \right\}
\end{align}
is an orthonormal basis relative to $\varphi^*g$, where $A_i > 0,$ $i=1\dots n,$ are positive real numbers.
\end{lem}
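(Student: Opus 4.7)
The plan is to use the $j$-map from~\eqref{eq:j-map def}, applied to a unit central vector, to produce a $g$-orthonormal basis of the noncentral summand of $\mf{h}_n$ that is compatible with the Heisenberg bracket structure, and then to define $\varphi$ by sending the standard basis $\{X_i, Y_i, Z\}$ to suitable rescalings of these vectors.

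First I decompose $\mf{h}_n = \mf{v} \oplus \mf{z}$ with $\mf{z} = \myspan\{Z\}$ the (characteristic) center and $\mf{v}$ its $g$-orthogonal complement, and set $Z_0 = Z/|Z|_g$. The induced map $j(Z_0)\colon \mf{v}\to\mf{v}$ is $g$-skew-symmetric by definition. I claim it is also invertible: if $j(Z_0)V = 0$ for some $V\in\mf{v}$, then $g([V,W], Z_0) = 0$ for every $W\in\mf{v}$; since $\mf{z} = \bb{R}Z_0$ and all brackets land in $\mf{z}$, this forces $[V,W] = 0$ for every $W \in \mf{v}$ (and hence for every $W\in\mf{h}_n$, because $[V,\mf{z}] = 0$ automatically), so $V$ is central and thus $V = 0$.

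By the spectral theorem for real skew-symmetric operators on a Euclidean space, I can then choose a $g$-orthonormal basis $\{e_1, f_1, \ldots, e_n, f_n\}$ of $\mf{v}$ and positive numbers $\mu_1,\ldots,\mu_n$ with $j(Z_0)e_i = \mu_i f_i$ and $j(Z_0)f_i = -\mu_i e_i$. Using~\eqref{eq:j-map def} together with $\mf{z} = \bb{R}Z_0$, I read off $[e_i, f_j] = \mu_i\delta_{ij} Z_0$ and $[e_i, e_j] = [f_i, f_j] = 0$. Setting $A_i := 1/\mu_i$ and defining $\varphi\colon \mf{h}_n\to\mf{h}_n$ on the standard basis by $\varphi(X_i) = \sqrt{A_i}\,e_i$, $\varphi(Y_i) = \sqrt{A_i}\,f_i$, $\varphi(Z) = Z_0$, the bracket check becomes $[\varphi(X_i), \varphi(Y_j)] = A_i\mu_i\delta_{ij} Z_0 = \delta_{ij}\varphi(Z)$, matching $\varphi([X_i, Y_j])$; the remaining Heisenberg relations reduce to the vanishing of $[e_i, e_j]$ and $[f_i, f_j]$, so $\varphi$ preserves all structure constants and is a Lie algebra automorphism (bijectivity is manifest).

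Finally, orthonormality of $\{X_1/\sqrt{A_1}, \ldots, X_n/\sqrt{A_n}, Y_1/\sqrt{A_1}, \ldots, Y_n/\sqrt{A_n}, Z\}$ with respect to $\varphi^*g$ is immediate from the construction, since for example $(\varphi^*g)(X_i/\sqrt{A_i}, X_j/\sqrt{A_j}) = g(e_i, e_j) = \delta_{ij}$, and the other pairings follow from $g(e_i, f_j) = 0$, $g(\mf{v}, Z_0) = 0$, and $|Z_0|_g = 1$. I do not foresee any substantive obstacle here: the whole argument reduces to the normal form for real skew-symmetric operators, and the only delicate point is observing that invertibility of $j(Z_0)$ on $\mf{v}$ forces the eigenvalues $\mu_i$ to be strictly positive, which is exactly what guarantees that the scalars $A_i = 1/\mu_i$ are positive real numbers, as required by the statement.
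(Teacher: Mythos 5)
Your proof is correct and follows essentially the same route as the paper's: decompose $\mf{h}_n$ into the center and its $g$-orthogonal complement, put $j$ of a unit central vector into skew-symmetric normal form, and rescale the resulting orthonormal eigenbasis to build the automorphism. The only notable differences are that you assemble the automorphism in a single step rather than as a composition $\psi_1\circ\psi_2$ of explicit stages, and you explicitly justify the invertibility of $j(Z_0)$ on $\mf{v}$ (hence the strict positivity of the $\mu_i$), a point the paper leaves implicit.
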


\begin{proof}
Consider the linear map defined by
\begin{align*}
    X_i \mapsto \frac{X_i}{|Z|} \qquad Y_i \mapsto Y_i \qquad Z \mapsto \frac{Z}{|Z|}.
\end{align*}
This is an automorphism of $\mf{h}_n$ and $Z$ is a unit vector relative to the pullback of the metric. Hence we can and will assume that $|Z| = 1$.

Let $\psi_1$ be the linear map defined by $\psi_1(X_i) = \allowbreak X_i - g(X_i,Z) Z$, $\psi_1(Y_i) = Y_i - g(Y_i,Z)Z$, and $\psi_1(Z) = Z$. Now $\psi_1 \in \Aut(\mf{h}_n)$ and $\mf{v} = \myspan\{ X_1, \ldots, X_n, \allowbreak Y_1, \ldots, Y_n \}$ is orthogonal to $\mf{z} = \myspan\{ Z \}$ relative to $\psi_1^* g$.

Next consider the map $j(Z) \in \so(\mf{v}, \psi_1^*g)$. Because it is skew-symmetric, there exists a $\psi_1^*g$-orthonormal basis $\{ \wt{X}_1, \ldots, \wt{X}_n, \wt{Y}_1, \ldots, \wt{Y}_n \}$ of $\mf{v}$ such that $j(Z)\wt{X}_i = d_i \wt{Y}_i$ and $j(Z)\wt{Y}_i = -d_i\wt{X}_i$
for some real numbers $d_i > 0$. Because
\begin{align*}
    (\psi_1^*g)(Z, [\wt{X}_i, \wt{Y}_i]) = (\psi_1^*g)(j(Z)\wt{X}_i, \wt{Y}_i) = (\psi_1^*g)(d_i\wt{Y}_i, \wt{Y}_i) = d_i
\end{align*}
we see that $[\wt{X}_i, \wt{Y}_i] = d_i Z$. Define the linear map $\psi_2$ by
\begin{align*}
    \psi_2(X_i) = \frac{1}{\sqrt{d_i}} \wt{X}_i \qquad \psi_2(Y_i) = \frac{1}{\sqrt{d_i}} \wt{Y}_i \qquad \psi_2(Z) = Z.
\end{align*}
Then $\psi_2 \in \Aut(\mf{h}_n)$ because
\begin{align*}
    [\psi_2(X_i),\psi_2(Y_i)]  =  Z = \psi_2(Z)=  \psi_2(\left[ X_i, Y_i \right])
\end{align*}
and, setting $A_i = d_i$, it is clear that the  basis \eqref{eq:standard orthonomral basis for Heisenberg algebra} is orthonormal relative to $\psi_2^* (\psi_1^* g)$. Hence $\varphi = \psi_1 \circ \psi_2$ is the desired automorphism of $\mf{h}_n$.
\end{proof}

When \eqref{eq:standard orthonomral basis for Heisenberg algebra} is an orthonormal basis of $\mf{h}_n$, the sharp and flat maps are given by
\begin{center}
\begin{tabular}{l l}
    $\flat( X_i / \sqrt{A_i} ) = \sqrt{A_i} \alpha_i,$ & $\sharp(\sqrt{A_i}\alpha_i) = X_i / \sqrt{A_i},$ \\
    $\flat( Y_i / \sqrt{A_i} ) = \sqrt{A_i} \beta_i,$ & $\sharp(\sqrt{A_i}\beta_i) = Y_i / \sqrt{A_i},$ \\
    $\flat( Z ) = \zeta,$ & $\sharp(\zeta) = Z.$
\end{tabular}
\end{center}
Relative to the basis $\{ X_1, \ldots, X_n, Y_1, \ldots, Y_n, Z\}$, the adjoint representation is
\begin{align*}
    \ad_U = \begin{bmatrix}
    0 & \cdots &  &  &  & 0 & 0 \\
    \vdots &  &  &  &  & \vdots & \vdots \\
    0 & \cdots &  &  &  & 0 & 0 \\
    -y_1 & \cdots & -y_n & x_1 & \cdots & x_n & 0
    \end{bmatrix}
\end{align*}
where $U = \sum x_i X_i + \sum b_i y_i + z Z$. Relative to the dual basis, the coadjoint representation is the negative transpose
\begin{align*}
    \ad^*_U = -(\ad_U)^T = \begin{bmatrix}
    0 & \cdots &  &  &  & 0 & y_1 \\
    \vdots &  &  &  &  & \vdots & \vdots \\
    0 & \cdots &  &  &  & 0 & y_n \\
    0 & \cdots &  &  &  & 0 & -x_1 \\
    \vdots &  &  &  &  & \vdots & \vdots \\
    0 & \cdots &  &  &  & 0 & -x_n \\
    0 & \cdots &  &  &  & 0 & 0
    \end{bmatrix}.
\end{align*}
Because the center of $\mf{h}_n$ is one-dimensional, $\zeta_m = \zeta$, where $\zeta_m$ is as specified in Lemma \ref{lem:every exact left-invariant 2-form is d of something central}. 
Letting $p = \sum_i a_i \alpha_i + \sum_i b_i \beta_i + c\zeta$ be a point in $\mf{h}_n^*$, the differential of the Hamiltonian is
\begin{align*}
    dh_p = \sharp(p + B\zeta) = \sum_i \frac{a_i}{A_i}X_i + \sum_i \frac{b_i}{A_i}Y_i + (c+ B)Z
\end{align*}
and the Euler vector field is
\begin{align*}
    E_h(p) = -\ad_{dh_p}^* p = \sum_i \frac{-cb_i}{A_i} \alpha_i + \sum_i \frac{ca_i}{A_i} \beta_i.
\end{align*}
To integrate the system $p' = E_h(p)$, note that the central component of the Euler vector field is constant by Lemma \ref{lem:structure of Euler vector field on two-step}. 
Suppose that $p(t) = \sum a_i(t) \alpha_i + \sum b_i(t) \beta_i + c(t) \zeta$ is a solution that satisfies the initial condition $p(0) = \sum u_i \alpha_i + \sum v_i \beta_i + z_0 \zeta$. Then $c(t) = z_0$ and the remaining components form a linear system,
\begin{align*}
    a_i'(t) = -\frac{z_0}{A_i}b_i(t) \qquad \qquad b_i'(t) = \frac{z_0}{A_i} a_i(t)
\end{align*}
that is directly integrated to find
\begin{align*}
    a_i(t) &= u_i \cos \left( \frac{z_0 t}{A_i} \right) - v_i \sin \left( \frac{z_0 t}{A_i} \right), \\
    b_i(t) &= u_i \sin \left( \frac{z_0 t}{A_i} \right) + v_i \cos \left( \frac{z_0 t}{A_i} \right).
\end{align*}

With  an expression for the integral curves of the Euler vector field now established, we use equations \eqref{eq:mag goed equations v2a} and \eqref{eq:mag goed equations v2b} to obtain a coordinate expression for the magnetic geodesics through the identity. Let $\bfX(t) = \sum x_i(t) X_i + \sum y_i(t) Y_i$. If $z_0 \neq 0$, a direct integration of \eqref{eq:mag goed equations v2a} together with $\bfX(0) = 0$ yields
\begin{align*}
    x_i(t) &= \frac{u_i}{z_0}\sin \left( \frac{z_0 t}{A_i} \right) + \frac{v_i}{z_0}\cos \left( \frac{z_0 t}{A_i} \right) - \frac{v_i}{z_0},\\
    y_i(t) &= -\frac{u_i}{z_0} \cos \left( \frac{z_0 t}{A_i} \right) + \frac{v_i}{z_0}\sin \left( \frac{z_0 t}{A_i} \right)  + \frac{u_i}{z_0}.
\end{align*}  
If $z_0 = 0,$ we obtain
\begin{align*}
    x_i(t) &= \frac{u_i}{A_i}t, \\
    y_i(t) &= \frac{v_i}{A_i}t.
\end{align*}
Because the center is one-dimensional, the central component $\bfZ(t)$ in \eqref{eq:mag goed equations v2b} can be expressed as $\bfZ(t) = z(t)Z$. To integrate \eqref{eq:mag goed equations v2b} in the case that $z_0 \neq 0$, first compute
\begin{align*}
    [\bfX'(t), \bfX(t)] = \sum\left(x'_iy_i-x_iy'_i\right)Z=\sum \frac{u_i^2 + v_i^2}{A_i z_0} \left( \cos \left(\frac{z_0 t}{A_i} \right) - 1 \right) Z
\end{align*}
so that
\begin{align*}
    \bfZ'(t) = z'(t)Z &= \sharp(z_0 \zeta + B \zeta) - \frac{1}{2}[\bfX'(t), \bfX(t)] \\
    &= (z_0 + B)Z - \sum \frac{u_i^2 + v_i^2}{2 A_i z_0} \left( \cos \left(\frac{z_0 t}{A_i} \right) - 1 \right) Z \\
    &= \left(z_0 + B + \sum \frac{u_i^2 + v_i^2}{2 A_i z_0} \right)Z - \sum \frac{u_i^2 + v_i^2}{2 A_i z_0} \cos \left(\frac{z_0 t}{A_i} \right) Z
\end{align*}
and hence
\begin{align*}
    z(t) &= \left( z_0 + B + \sum \frac{u_i^2 + v_i^2}{2 A_i z_0} \right)t - \sum \frac{u_i^2 + v_i^2}{2z_0^2} \sin \left(\frac{z_0 t}{A_i} \right).
\end{align*}
In summary, when $z_0 \neq 0$, every magnetic geodesic $\sigma(t) = \exp( \sum x_i(t) X_i + \sum y_i(t) Y_i + z(t)Z)$ satisfying $\sigma(0) = e$ has the form
\begin{align}
    x_i(t) &= \frac{u_i}{z_0}\sin \left( \frac{z_0 t}{A_i} \right) - \frac{v_i}{z_0} \left(1- \cos \left( \frac{z_0 t}{A_i} \right)  \right), \label{eq:x_i component of mag geod} \\
    y_i(t) &= \frac{u_i}{z_0} \left( 1 - \cos \left( \frac{z_0 t}{A_i} \right) \right) + \frac{v_i}{z_0}\sin \left( \frac{z_0 t}{A_i} \right), \label{eq:y_i component of mag geod} \\
    z(t) &= \left( z_0 + B + \sum \frac{u_i^2 + v_i^2}{2 A_i z_0} \right)t - \sum \frac{u_i^2 + v_i^2}{2z_0^2} \sin \left(\frac{z_0 t}{A_i} \right). \label{eq:z component of mag geod}
\end{align}
When $z_0 = 0,$ we obtain
\begin{align}
    x_i(t) &= \frac{u_i}{A_i}t, \label{eq:x_i component of nonspiraling mag geod} \\
    y_i(t) &= \frac{v_i}{A_i}t, \label{eq:y_i component of nonspiraling mag geod} \\
    z_i(t) &= B t. \label{eq:z component of nonspiraling mag geod} 
\end{align}

\begin{rmk} \label{rmk:spiraling terminology}
A magnetic geodesic $\sigma(t)$ will be a one-parameter subgroup if and only if $z_0 = 0$ or $z_0 \neq 0$ and $u_i = v_i = 0$ for all $i$. We will sometimes call a magnetic geodesic \emph{spiraling} if it is not a one-parameter subgroup, and \emph{non-spiraling} if it is. We will also call a magnetic geodesic \emph{central} if it is of the form $\sigma(t) \in Z(H_n)$ for all $t$.
\end{rmk}
The initial velocity of the magnetic geodesic $\sigma(t)$ is
\begin{align*}
    \sigma'(0) = \sum \left( \frac{u_i}{A_i} X_i + \frac{v_i}{A_i} Y_i \right) + (z_0 + B)Z.
\end{align*}
Because $|X_i|^2 = |Y_i|^2 = A_i$, we can compute the square of the energy $E = |\sigma'(t)| = |\sigma'(0)|$ as (see Remark \ref{rmk:how to calculate energy of mag geod from initial conditions}) 
\begin{align} \label{eq:energy of spiraling mag geod}
    E^2 = |\sigma'(0)|^2 = \sum_i  \frac{u_i^2 + v_i^2}{A_i} + (z_0 + B)^2
\end{align}
Note that this expression is valid for all values of $z_0$.

\begin{thm} \label{thm:closed contractible magnetic geodesics}
There exist periodic magnetic geodesics with energy $E$ if and only if $0 < E < |B|$. For any $0 < E < |B|$, let $z_0 = -\sgn(B)\sqrt{B^2 - E^2}$ and let $u_i$ and $v_i$ be any numbers satisfying \eqref{eq:energy of spiraling mag geod}. Then the spiraling magnetic geodesics determined by $u_1,v_1, \ldots, u_n, v_n, z_0$ will be periodic of energy $E$. Moreover, the period of such a geodesic is $\omega = 2\pi A / z_0$.
\end{thm}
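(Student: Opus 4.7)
The plan is to read off the periodicity conditions directly from the explicit coordinate formulas \eqref{eq:x_i component of mag geod}--\eqref{eq:z component of mag geod} for $\sigma(t) = \exp(\sum x_i(t) X_i + \sum y_i(t) Y_i + z(t) Z)$ in the spiraling case $z_0 \neq 0$. Each $x_i(t)$ and $y_i(t)$ is purely trigonometric with frequency $|z_0|/A_i$, and the oscillatory part of $z(t)$ is a sum of sines with the same frequencies; hence the only obstruction to $\sigma(t)$ being periodic is the drift term $\bigl(z_0 + B + \sum_i (u_i^2 + v_i^2)/(2A_i z_0)\bigr) t$ appearing in $z(t)$. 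The first step is to require that this coefficient vanish and combine the resulting equation with the energy identity \eqref{eq:energy of spiraling mag geod}.

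Multiplying the vanishing-drift equation by $z_0$ and substituting $\sum_i (u_i^2 + v_i^2)/A_i = E^2 - (z_0+B)^2$, the resulting quadratic in $z_0$ collapses to $z_0^2 = B^2 - E^2$. This immediately gives the necessary condition $E \le |B|$. The boundary case $E = |B|$ forces $z_0 = 0$ and places us in the non-spiraling regime \eqref{eq:x_i component of nonspiraling mag geod}--\eqref{eq:z component of nonspiraling mag geod}, where $\sigma(t)$ is a one-parameter subgroup and therefore not periodic (outside the trivial case $\sigma \equiv e$). This establishes the \emph{only if} direction $0 < E < |B|$.

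To pin down the sign of $z_0$, observe that \eqref{eq:energy of spiraling mag geod} forces $E^2 - (z_0+B)^2 = \sum_i (u_i^2 + v_i^2)/A_i \ge 0$, i.e., $|z_0 + B| \le E$. Of the two roots $z_0 = \pm\sqrt{B^2 - E^2}$, a case check on $\sgn(B)$ shows that only $z_0 = -\sgn(B)\sqrt{B^2-E^2}$ satisfies this inequality; the opposite choice yields $|z_0 + B|^2 = 2B^2 - E^2 + 2|B|\sqrt{B^2-E^2}$, which strictly exceeds $E^2$ whenever $E < |B|$. For this value of $z_0$ and any $u_i, v_i$ satisfying \eqref{eq:energy of spiraling mag geod}, sufficiency and the period formula then follow by direct substitution: the drift term vanishes by construction, every remaining trigonometric term has period $2\pi A_i/|z_0|$, and in the three-dimensional case (where there is a single $A$) $\omega = 2\pi A/|z_0|$ is a genuine period of $\sigma(t)$. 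No serious calculation is required; the only mild subtlety is the sign analysis of $z_0$.
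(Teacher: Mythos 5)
Your proposal is correct and follows essentially the same route as the paper: periodicity is reduced to the vanishing of the drift coefficient in $z(t)$, which combined with \eqref{eq:energy of spiraling mag geod} yields $z_0^2 = B^2 - E^2$, and the sign of $z_0$ is then forced by the spiraling constraint $(z_0+B)^2 < E^2$. The only cosmetic difference is that you verify the sign by directly computing $|z_0+B|^2$ for both roots, whereas the paper phrases the same constraint as $z_0 \in (-B-E,-B+E)$ and checks membership via a geometric-mean observation.
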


\begin{proof}
Recall that non-spiraling magnetic geodesics cannot be periodic. Inspection of the coordinate functions \eqref{eq:x_i component of mag geod}-\eqref{eq:z component of mag geod} of a spiraling magnetic geodesic shows they will yield a periodic magnetic geodesic if and only if the coefficient of $t$ in \eqref{eq:z component of mag geod} is zero. This condition is
\begin{align*}
    0 &= z_0 + B + \sum \frac{u_i^2 + v_i^2}{2 A_i z_0} = z_0 + B + \frac{1}{2z_0} \left( E^2 - (z_0 + B)^2 \right)
\end{align*}
or
\begin{align*}
    z_0^2 = B^2 - E^2.
\end{align*}
It can only be satisfied when $E < |B|$. To obtain a spiraling magnetic geodesic we need to require that $(z_0 + B)^2 < E^2$ or, equivalently, $z_0 \in (-B - E, -B + E)$. Since this interval contains only negative or positive numbers, depending on the sign of $B$, we must choose $z_0 = -\sgn(B)\sqrt{B^2 - E^2}$. Finally, to see that $z_0$ is indeed contained in this interval, note that $\sqrt{(B-E)(B+E)} = \sqrt{(-B+E)(-B-E)}$ is the geometric mean of the endpoints of interval.
\end{proof}

\begin{ex} \label{ex:mag geods on 3D Heis}
For convenience we state  the component functions of a magnetic geodesic $\sigma(t) = \exp(x(t)X + y(t)Y + z(t)Z)$ in the 3-dimensional Heisenberg group (i.e. $n = 1$) with $\sigma(0) = e$. To ease notation, we use the dual bases $\{ \alpha, \beta, \zeta\}$ and $\{X,Y,Z\}$ for $\mf{h}_1^*$ and $\mf{h}_1$, respectively, and we let $A = A_1$. Given a point $p(0) = u_0 \alpha + v_0 \beta + z_0 \zeta$,  $z_0 \neq 0$, the corresponding magnetic geodesic has component functions
\begin{align*}
    x(t) &= \frac{u_0}{z_0}\sin \left( \frac{z_0 t}{A} \right) - \frac{v_0}{z_0} \left(1- \cos \left( \frac{z_0 t}{A} \right)  \right), \\
    y(t) &= \frac{u_0}{z_0} \left( 1 - \cos \left( \frac{z_0 t}{A} \right) \right) + \frac{v_0}{z_0}\sin \left( \frac{z_0 t}{A} \right), \\
    z(t) &= \left( z_0 + B + \frac{u_0^2 + v_0^2}{2 A z_0} \right)t - \frac{u_0^2 + v_0^2}{2z_0^2} \sin \left(\frac{z_0 t}{A} \right). 
\end{align*}
When $z_0 = 0,$ we obtain
\begin{align*}
    x(t) = \frac{u_0}{A}t \qquad y(t) = \frac{v_0}{A}t \qquad z(t) = B t.
\end{align*}
\end{ex}

\begin{rmk}
\label{rmk:RiemannVsMagGeods}
It is instructive to compare the magnetic geodesics on $\bb{R}^2$ given in Section \ref{sec:Euclidean plane example} and the magnetic geodesics on $H_1$ given in Example \ref{ex:mag geods on 3D Heis}. In the former, all magnetic geodesics are closed circles with radii that depend on the energy. In the latter, the paths $x(t)X + y(t)Y$ through the complement to the center are also circles whose radii depend on both the energy and $z_0$.  It is also worth noting some qualitative differences between Riemannian geodesics  and magnetic geodesics on Heisenberg groups. In Riemannian case, one-parameter subgroups of the form $\exp(t(x_0 X + y_0 Y))$ are always geodesics. In contrast, the central component $z(t)$ of a magnetic geodesic can never be zero. Finally, note that in the Riemannian setting there are never closed geodesics in $H_n$ (compare with  Theorem \ref{thm:closed contractible magnetic geodesics}).
\end{rmk}

\section{Compact Quotients of Heisenberg Groups} \label{sec:compact quotients}

A geodesic $\sigma: \bb{R} \to M$ in a Riemannian manifold $M$ is called periodic or (smoothly) closed if $\sigma(t + \omega) = \sigma(t)$ for all $t \in \bb{R}$. A periodic or closed magnetic geodesic is defined similarly, and we now investigate the closed magnetic geodesics on manifolds of the form $\Gamma\backslash H_n$, where $\Gamma$ is a cocompact (i.e., $\Gamma\backslash H_n$ compact), discrete subgroup of the $(2n+1)$-dimensional simply connected Heisenberg group $H_n$. As is common, we proceed by considering $\gamma$-periodic magnetic geodesics on the universal cover $H_n$. An important distinction between the magnetic and Riemannian settings is that in the latter one needs to address each energy level separately because magnetic geodesics cannot be reparameterized.

\subsection{$\gamma$-Periodic Magnetic Geodesics}

\begin{defn}
Let $N$ be a simply connected nilpotent Lie group with left invariant metric and magnetic form. For any $\gamma \in N$ not equal to the identity, a magnetic geodesic $\sigma\left(t\right)$ is called \emph{$\gamma$-periodic with period $\omega$} if $\omega \neq 0$ and for all $t\in\mathbb{R}$ 
\begin{align} \label{eq:gamma-periodic geodeisc condition}
    \gamma\sigma\left(t\right)=\sigma\left(t+\omega\right).
\end{align}
We also say that \emph{$\gamma$ translates the magnetic geodesic $\sigma(t)$ by amount $\omega$}. The number $\omega$ is called a \emph{period of $\gamma$}.
\end{defn}
When $\Gamma < N$ is a cocompact discrete subgroup and $\gamma \in \Gamma$, a $\gamma$-periodic magnetic geodesic will project to a smoothly closed magnetic geodesic under the mapping $N \rightarrow \Gamma\backslash N$ and will be contained in the free homotopy class represented by $\gamma$.  Every periodic magnetic geodesic on $\Gamma\backslash N$ arises as the image of a $\gamma$-periodic magnetic geodesic on $N$.  

\begin{lem} \label{lem:non-central elts only translate non-spiraling mag goeds}
Let $\gamma = \exp(V_\gamma + Z_\gamma) \in H_n$, where $Z_\gamma \in Z(\mf{h}_n)$ and $V_\gamma$ is orthogonal to $Z(\mf{h}_n)$, and let $\sigma(t) = \exp(\bfX(t) + \bfZ(t))$ be a $\gamma$-periodic magnetic geodesic. If $V_\gamma \neq 0$, then $\sigma$ is a noncentral 1-parameter subgroup (see Remark \ref{rmk:spiraling terminology}).
\end{lem}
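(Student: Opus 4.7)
The plan is to translate the $\gamma$-periodicity condition \eqref{eq:gamma-periodic geodeisc condition} into a pair of identities at the Lie algebra level and then read off the conclusion from the explicit formulas for magnetic geodesics. Writing $\sigma(t) = \exp(\bfX(t) + \bfZ(t))$ with $\bfX(t)\in\mf{v}$ and $\bfZ(t)\in\mf{z}$, the multiplication law \eqref{eq:multiplication law} together with the vanishing brackets $[V_\gamma,\bfZ(t)] = [Z_\gamma,\bfX(t)] = [Z_\gamma,\bfZ(t)] = 0$ in the two-step algebra $\mf{h}_n$ gives
\[ \gamma\sigma(t) = \exp\Bigl( V_\gamma + \bfX(t) + Z_\gamma + \bfZ(t) + \tfrac{1}{2}[V_\gamma,\bfX(t)] \Bigr). \]
Comparing with $\sigma(t+\omega) = \exp(\bfX(t+\omega) + \bfZ(t+\omega))$ and using that $\exp$ is a diffeomorphism yields the $\mf{v}$- and $\mf{z}$-identities
\[ \bfX(t+\omega) = V_\gamma + \bfX(t), \qquad \bfZ(t+\omega) = Z_\gamma + \bfZ(t) + \tfrac{1}{2}[V_\gamma,\bfX(t)]. \]

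The key step is to use the first identity to force $z_0 = 0$. Iterating it gives $\bfX(t+n\omega) = \bfX(t) + nV_\gamma$ for every $n \in \bb{Z}$, so if $V_\gamma \neq 0$ then $|\bfX(t+n\omega)| \to \infty$ as $n \to \infty$. Now suppose for contradiction that $z_0 \neq 0$. Then the formulas \eqref{eq:x_i component of mag geod}--\eqref{eq:y_i component of mag geod} express each $x_i(t)$ and $y_i(t)$ as a bounded combination of $\sin(z_0 t/A_i)$, $\cos(z_0 t/A_i)$, and a constant, so $|\bfX(t)|$ is globally bounded on $\bb{R}$. This is incompatible with the unbounded growth just obtained; hence $z_0 = 0$.

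Once $z_0 = 0$, the non-spiraling formulas \eqref{eq:x_i component of nonspiraling mag geod}--\eqref{eq:z component of nonspiraling mag geod} show that $\sigma(t) = \exp(tW)$ with
\[ W \;=\; \sum_i \frac{u_i}{A_i}X_i \;+\; \sum_i \frac{v_i}{A_i}Y_i \;+\; BZ, \]
so $\sigma$ is a one-parameter subgroup. Substituting this linear $\bfX(t)$ into $\bfX(t+\omega) = V_\gamma + \bfX(t)$ produces $V_\gamma = \omega\sum_i\bigl((u_i/A_i)X_i + (v_i/A_i)Y_i\bigr)$, so the hypothesis $V_\gamma \neq 0$ forces the $\mf{v}$-component of $W$ to be nonzero. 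Therefore $\sigma$ is a noncentral one-parameter subgroup.

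I do not expect a serious obstacle: the entire argument rests on the contrast between the boundedness of the $\mf{v}$-component of a spiraling magnetic geodesic and the constant nonzero drift that a noncentral translator $\gamma$ injects into that component. The only bookkeeping is verifying that the degenerate sub-case $z_0 \neq 0$ with all $(u_i,v_i) = 0$ (a central one-parameter subgroup with $\bfX \equiv 0$) is absorbed by the boundedness step, which it is, since the identically zero function is trivially bounded and so again incompatible with $V_\gamma \neq 0$.
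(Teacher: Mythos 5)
Your proof is correct and follows essentially the same route as the paper's: extract the drift relation $\bfX(t+\omega) = V_\gamma + \bfX(t)$ from the Campbell--Baker--Hausdorff formula, iterate it to force unboundedness of the $\mf{v}$-component, contrast this with the boundedness of that component when $z_0 \neq 0$ to conclude $z_0 = 0$, and then rule out central one-parameter subgroups. The only differences are cosmetic (the paper iterates the group relation $\gamma^k\sigma(t) = \sigma(t+k\omega)$ before comparing components, and rules out centrality by a central-versus-noncentral contradiction rather than your explicit identity $V_\gamma = \omega\,\pi_{\mf{v}}(W)$).
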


\begin{proof}
Repeated use of \eqref{eq:gamma-periodic geodeisc condition} shows that $\gamma^k \sigma(t) = \sigma(t + k\omega)$. Using the multiplication formula \eqref{eq:multiplication law} on each side of the equation, the non-central components must satisfy $kV_\gamma + \bfX(t) = \bfX(t + \omega)$. If $V_\gamma \neq 0$, then the vector-valued function $\bfX(t + \omega) - \bfX(t)$ must be unbounded. Inspection of the magnetic geodesic equation \eqref{eq:x_i component of mag geod}-\eqref{eq:z component of nonspiraling mag geod} shows that this can only happen if $z_0 = 0$, i.e. $\sigma$ is a 1-parameter subgroup. Moreover $\sigma$ cannot be a central 1-parameter subgroup because then the left-hand side of \eqref{eq:gamma-periodic geodeisc condition} would be noncentral and right-hand side would be central, a contradiction.

\end{proof}

\begin{thm} \label{thm:gamma-periodic mag geods when gamma is not central}
Let $\gamma = \exp(V_\gamma + z_\gamma Z) \in H_n$, with $V_\gamma \neq 0$. For each $E > |B|$, there exist two $\gamma$-periodic magnetic geodesic $\sigma(t)$ with energy $E$ and periods $\omega = \pm |V_\gamma|/\sqrt{E^2 - B^2}$. There do not exist any $\gamma$-periodic magnetic geodesics with energy $E \leq |B|$.
\end{thm}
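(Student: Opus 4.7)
The plan is to combine Lemma \ref{lem:non-central elts only translate non-spiraling mag goeds} with a direct Campbell--Baker--Hausdorff (CBH) computation to extract the constraint on $\omega$. First, since left translation is an isometry preserving the magnetic form, I would apply the Lemma to the left-translated geodesic $\sigma_0^{-1}\sigma(t)$ (which starts at the identity and is $(\sigma_0^{-1}\gamma\sigma_0)$-periodic, with the same non-zero non-central component $V_\gamma$). The Lemma then guarantees that every $\gamma$-periodic magnetic geodesic is a left translate of a non-central one-parameter subgroup, and equations \eqref{eq:x_i component of nonspiraling mag geod}--\eqref{eq:z component of nonspiraling mag geod} show that it has the form
\[
\sigma(t) = \sigma_0 \exp\bigl(t(U + BZ)\bigr)
\]
with $\sigma_0 = \sigma(0)$ and some non-zero $U \in \mf{v}$. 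The energy formula \eqref{eq:energy of spiraling mag geod} then gives $|U| = \sqrt{E^2 - B^2}$, already forcing $E \geq |B|$.

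Next I would substitute this form into $\gamma\sigma(t) = \sigma(t+\omega)$. Because one-parameter subgroups are abelian, $\sigma(t+\omega) = \sigma_0 \exp(t(U+BZ))\exp(\omega(U+BZ))$, so left-translating by $\sigma_0^{-1}$ reduces the periodicity condition to the single group identity
\[
\sigma_0^{-1}\gamma\sigma_0 = \exp(\omega U + \omega B Z).
\]
Computing the left-hand side via CBH and using that second-order brackets vanish in the two-step nilpotent setting, one obtains $\sigma_0^{-1}\gamma\sigma_0 = \exp\bigl(V_\gamma + (z_\gamma + c)Z\bigr)$, where $c \in \bb{R}$ is defined by $[V_\gamma, W_0^{\mf{v}}] = cZ$ and $W_0^{\mf{v}}$ is the non-central component of $\log\sigma_0$. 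Matching $\mf{v}$- and $\mf{z}$-components yields the two conditions $V_\gamma = \omega U$ and $z_\gamma + c = \omega B$.

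From the first condition combined with $|U| = \sqrt{E^2 - B^2}$ I would read off $\omega = \pm|V_\gamma|/\sqrt{E^2-B^2}$; this is real precisely when $E > |B|$, and each sign uniquely determines the direction $U = V_\gamma/\omega$. For each such $\omega$, the central compatibility condition becomes the affine equation $c = \omega B - z_\gamma$ in $W_0^{\mf{v}}$; because $V_\gamma \neq 0$, the linear functional $W \mapsto g([V_\gamma, W], Z)$ on $\mf{v}$ is non-zero and therefore surjective onto $\bb{R}$, so a valid $\sigma_0$ always exists. Non-existence for $E \leq |B|$ is immediate: $E < |B|$ gives $|U|^2 < 0$, while $E = |B|$ forces $U = 0$, contradicting the non-centrality guaranteed by the Lemma.

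I expect the main obstacle to be careful bookkeeping of CBH brackets and the $\mf{v}$/$\mf{z}$ decomposition in the conjugation computation; once that is in place, both the period formula and the critical energy threshold fall out algebraically. A secondary subtlety is interpreting the count of \emph{two}: the sign of $\omega$ selects opposite traversal directions along $V_\gamma$ and therefore genuinely two distinct geodesics, whereas the remaining freedom in $\sigma_0$ consistent with the central constraint merely produces left translates of the same underlying curve.
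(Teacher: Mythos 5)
Your proposal is correct and follows essentially the same route as the paper: reduce to noncentral one-parameter subgroups via Lemma \ref{lem:non-central elts only translate non-spiraling mag goeds}, obtain $E>|B|$ from the energy identity, and exploit the fact that conjugation in $H_n$ moves only the central component of $\gamma$ (the paper constructs an explicit conjugate $\gamma^*=\exp(V_\gamma+kZ)$ translating $\exp(t(V_0+BZ))$ and conjugates back, which is the ``sufficiency'' direction of your CBH matching). Your version has the minor added benefit of making explicit that $\pm|V_\gamma|/\sqrt{E^2-B^2}$ are the \emph{only} possible periods, via the surjectivity of $W\mapsto g([V_\gamma,W],Z)$.
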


\begin{proof}
By Lemma \ref{lem:non-central elts only translate non-spiraling mag goeds}, we need only consider non-spiraling magnetic geodesics. The energy of any such magnetic geodesic satisfies
\begin{align*}
    E^2 = \sum \frac{u_i^2 + v_i^2}{A_i} + B^2 \geq B^2
\end{align*}
If equality holds, then $\sigma$ is a central 1-parameter subgroup, which is excluded by Lemma \ref{lem:non-central elts only translate non-spiraling mag goeds}. Hence $E > |B|$.

Fix $V_0 \in \mf{v}$ such that its magnitude satisfies $|V_0|^2 + B^2 = E^2$ and its direction is parallel to $V_\gamma$, $V_0 = (B/k)V_\gamma$ for some $k \in \bb{R}_{\neq0}$. Define $\gamma^* = \exp(V_\gamma + kZ)$ and $\sigma^*(t) = \exp(t(V_0 + BZ))$. Then
\begin{align*}
    \gamma^* \sigma^*(t) &= \exp\left( \frac{k}{B}\left(\frac{B}{k}V_\gamma + BZ\right)\right) \exp(t(V_0 + BZ)) \\
    &= \exp\left( \frac{k}{B}\left(V_0 + BZ\right)\right) \exp(t(V_0 + BZ)) \\
    &= \exp\left( \left( t + \frac{k}{B} \right) \left(V_0 + BZ\right) \right) \\
    &= \sigma^*\left( t + \frac{k}{B} \right)
\end{align*}
shows that $\sigma^*$ is a $\gamma^*$-periodic magnetic geodesic of energy $E$ with period $\omega = k/B$. Using the multiplication formula \eqref{eq:multiplication law} and the fact that $Z(\mf{h}_n)$ is one-dimensional, it is straightforward to see that $\gamma$ and $\gamma^*$ are conjugate in $H_n$. Thus, there exists $a \in H_n$ such that $a \gamma^* a^{-1} = \gamma$. Now $\sigma = a \cdot \sigma^*$ is a magnetic geodesic of energy $E$ and
\begin{align*}
    \gamma \cdot \sigma(t) = a \gamma^* a^{-1} \sigma(t) = a \gamma^* \sigma^*(t) = a\sigma^*(t + \omega) = \sigma(t + \omega)
\end{align*}
shows that it is $\gamma$-periodic of period $\omega$. The expression for $\omega$ follows from $\pm k/B=|V_\gamma|/|V_0|,$ and  $|V_0| = \sqrt{E^2 - B^2}$.
\end{proof}



%

Having dealt with the periods of a non-central element of $H_n$, we now consider the case when $\gamma = \exp(z_\gamma Z)$ is central. In this case, there exist  $\gamma$-periodic magnetic geodesics starting at the identity of energy both greater than and less than $|B|$. For a fixed energy $E > |B|$, there will be finitely many distinct periods associated with $\gamma$-periodic magnetic geodesics, while there will be infinitely many distinct periods  when $E < |B|$.

\begin{lem} \label{lem:central elements translate only central 1-param subgroups}
Let $\gamma = \exp(z_\gamma Z)$ for some $z_\gamma \in \bb{R}^*$ and suppose that $\sigma(t)$ is a $\gamma$-periodic magnetic geodesic and a 1-parameter subgroup. Then $\sigma(t) = \exp(tz_0Z)$ for some $z_0 \in \bb{R}^*$. Moreover, for every $E>0$, there exist two $\gamma$-periodic magnetic geodesics of energy $E$, $\sigma(t) = \exp(t(\pm E)Z)$, with period $\omega = z_\gamma / (\pm E)$.
\end{lem}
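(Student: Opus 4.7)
The plan is to leverage the general form of a one-parameter subgroup together with the Campbell-Baker-Hausdorff formula \eqref{eq:multiplication law}, which simplifies dramatically when one factor is central.

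First, to prove that the $\gamma$-periodic magnetic geodesic $\sigma$ must itself be central, I would write $\sigma(t) = \exp(tW)$ for some nonzero $W \in \mf{h}_n$ (since $\sigma$ is a one-parameter subgroup) and exploit the fact that $z_\gamma Z$ is central. The multiplication law gives
\begin{align*}
    \gamma\sigma(t) = \exp(z_\gamma Z)\exp(tW) = \exp\!\left(z_\gamma Z + tW + \tfrac{1}{2}[z_\gamma Z, tW]\right) = \exp(z_\gamma Z + tW),
\end{align*}
because $Z \in Z(\mf{h}_n)$ makes the bracket vanish. The periodicity condition $\gamma\sigma(t) = \sigma(t+\omega)$ then reads $\exp(z_\gamma Z + tW) = \exp((t+\omega)W)$, and since $\exp : \mf{h}_n \to H_n$ is a diffeomorphism, we can equate exponents to obtain $z_\gamma Z = \omega W$. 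Thus $W = (z_\gamma/\omega)Z$, which is central, and setting $z_0 := z_\gamma/\omega \in \bb{R}^*$ gives $\sigma(t) = \exp(tz_0 Z)$ as claimed. This argument neatly subsumes both cases of Remark \ref{rmk:spiraling terminology} (the $z_0=0$ case and the spiraling-degenerate case) without splitting into subcases.

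Second, for the existence statement at a given energy $E > 0$, I would verify directly that $\sigma(t) = \exp(t(\pm E)Z)$ is a magnetic geodesic. One way is to observe that this arises from the explicit formulas of Section \ref{sec:magnetic geodesic equations on simply connected 2n+1 dim Heis}: take $u_i = v_i = 0$ together with initial central momentum $z_0 = \pm E - B$ in \eqref{eq:x_i component of mag geod}--\eqref{eq:z component of mag geod} (or $z_0 = 0$ in \eqref{eq:x_i component of nonspiraling mag geod}--\eqref{eq:z component of nonspiraling mag geod} in the degenerate case $\pm E = B$). Alternatively, one can verify the magnetic geodesic equation \eqref{eq:magnetic geodesic equation} directly: $\sigma'$ is the left-translate of the central vector $(\pm E)Z$, so $\nabla_{\sigma'}\sigma' = 0$ by \eqref{eq:Levi-Civita eqns}; and the Lorentz force satisfies $F((\pm E)Z) = 0$ since $\Omega(cZ, \cdot) = Bd\zeta(cZ,\cdot) = -B\zeta([cZ,\cdot]) = 0$.

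Finally, the energy and period are immediate: $|\sigma'(0)| = |(\pm E)Z| = E$ since $|Z| = 1$, and the periodicity computation
\begin{align*}
    \gamma\sigma(t) = \exp((z_\gamma \pm Et)Z) = \sigma(t + z_\gamma/(\pm E))
\end{align*}
identifies $\omega = z_\gamma/(\pm E)$. The main obstacle here is really only notational care—keeping track of the overloaded symbol $z_0$ (used both for the initial momentum parameter from Section \ref{sec:magnetic geodesic equations on simply connected 2n+1 dim Heis} and for the coefficient of $Z$ in the conclusion of the lemma)—rather than any substantive difficulty, since both halves reduce to direct consequences of the group law and the explicit formulas already derived.
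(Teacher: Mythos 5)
Your proof is correct and follows essentially the same route as the paper: equate exponents of $\gamma\sigma(t)$ and $\sigma(t+\omega)$ using the group law and the injectivity of $\exp$ to force the noncentral part of the generator to vanish, then exhibit $\exp(t(\pm E)Z)$ explicitly via the formulas with $u_i=v_i=0$ and $z_0=\pm E-B$. Your phrasing of the first step with a general generator $W$ is in fact marginally cleaner than the paper's, which writes the one-parameter subgroup as $\exp(tV_0+BtZ)$ and thereby implicitly treats only the $z_0=0$ case, whereas your version uniformly covers the degenerate spiraling case $u_i=v_i=0$, $z_0\neq 0$ as well.
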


\begin{proof}
Since $\sigma$ is a 1-parameter subgroup by hypothesis, $\sigma(t) = \exp(tV_0 + BtZ)$. On the one hand $\gamma \sigma(t) = \exp(t V_0 + (Bt + z_\gamma)Z)$ and on the other $\sigma(t + \omega) = \exp((t + \omega)V_0 + B(t + \omega)Z)$. Hence $\omega V_0 = 0$ and since $\omega \neq 0$, we conclude that $V_0 = 0$, showing the first claim.


For each energy $E > 0$, let $z_0 = -B \pm E$ and let $\sigma(t)$ be the magnetic geodesic $\sigma(t) = \exp(t(\pm E)Z)$. Then $\sigma$ is a magnetic geodesic of energy $E$ and
\begin{align*}
    \gamma \sigma(t) = \sigma( (z_\gamma \pm Et) Z) = \exp\left( \pm E \left( \frac{z_\gamma}{\pm E} + t \right) Z \right) = \sigma(t + \omega)
\end{align*}
shows that it is $\gamma$-periodic of period $\omega$.
\end{proof}

Next suppose that $\sigma(t)$ is a spiraling magnetic geodesic, so that the component functions of $\sigma(t)$ have the form \eqref{eq:x_i component of mag geod}-\eqref{eq:z component of mag geod}. Comparing the coefficients of $X_1, \ldots, X_n, Y_1, \ldots, Y_n$ in $\gamma \sigma(t)$ and $\sigma(t + \omega)$ give conditions
\begin{align} \label{eq:resonance condition}
    \sin\left( \frac{z_0}{A_i}(t + \omega) \right) = \sin\left( \frac{z_0}{A_i} t \right) \qquad \cos\left( \frac{z_0}{A_i}(t + \omega) \right) = \cos\left( \frac{z_0}{A_i} t \right)
\end{align}
for each $i = 1, \ldots, n$ such that $u_i^2 + v_i^2 \neq 0$. 

We now specialize to case of the three-dimensional Heisenberg group and obtain a complete description of the spiraling $\gamma$-periodic magnetic geodesics through the identity. Since the left-invariant metric is determined by one parameter, and a magnetic geodesic through the identity is determined by $z_0$ and only one pair of $u_i, v_i$, we write $A = A_1$, $u_0 = u_1$ and $v_0 = v_1$ to ease notation. In general, the analysis will depend on the relative size of $E$ and $B$, and hence breaks up naturally into the three cases $E> |B|$, $E<|B|$ and $E = |B|$. In each case, we first establish the range of permissible integers $\ell$. Next, for each permissible $\ell$, we describe the magnetic geodesics through the identity translated by $\gamma$ along with their respective periods.

In this case, the period $\omega$ and the coordinate $z_0$ must be related by $\omega z_0 = 2\pi A \ell$, where $\ell \in \bb{Z}$. Comparing the central components in $\gamma \sigma(t)$ and $\sigma(t + \omega)$ gives the condition $z(t) + z_\gamma = z(t + \omega)$. That is,
\begin{align*}
    & \left(z_0 + B + \frac{u_0^2 + v_0^2}{2A z_0} \right) t - \frac{u_0^2 + v_0^2}{2 z_0^2} \sin \left(\frac{z_0 t}{A}\right) + z_\gamma \\ & \qquad = \left(z_0 + B + \frac{u_0^2 + v_0^2}{2A z_0} \right) (t + \omega) - \frac{u_0^2 + v_0^2}{2 z_0^2} \sin \left(\frac{z_0 }{A} (t + \omega) \right).
\end{align*}
This simplifies to
\begin{align} \label{eq:central periodic condition 1}
    z_\gamma = \left(z_0 + B + \frac{u_0^2 + v_0^2}{2A z_0} \right) \omega,
\end{align}
and using  \eqref{eq:energy of spiraling mag geod} to eliminate the fraction and $\omega z_0 = 2\pi A \ell$ to eliminate $\omega$ this can be written as
\begin{align} \label{eq:central periodic condition 2}
    z_\gamma &= \left( z_0 + B + \frac{1}{2z_0}(E^2 - (z_0 + B)^2 ) \right) \frac{2\pi A \ell}{z_0}.
\end{align}
If $E = |B|$, then the above simplifies to $z_\gamma = \pi A \ell$. If $E \neq |B|$, then after clearing denominators and solving for $z_0$, we obtain the expression
\begin{align} \label{eq:central periodic condition 3}
    z_0^2 = \frac{E^2 - B^2}{\frac{z_\gamma}{\pi A \ell} - 1}.
\end{align}

\begin{lem} \label{lem:effect of energy constraint on range of ells}
Let $\gamma = \exp(z_\gamma Z)$ be a central element of the Heisenberg group. For each nonzero energy level, the range of admissible integers $\ell$ and the corresponding choices of $z_0$ for which there exists a $\gamma$-periodic magnetic geodesic through the identity are given by the following table.
\begin{center}
{\renewcommand{\arraystretch}{2}
\begin{tabular}{l|l|l|l}
 &  & \multicolumn{1}{|c}{$\ell$} & \multicolumn{1}{|c}{$z_0$} \\ \hline
(1a)    & $E > |B|$       & $1 < \frac{2E}{E + B} < \frac{z_\gamma}{\pi A \ell}$    & $-\sqrt{\frac{E^2 - B^2}{\frac{z_\gamma}{\pi A \ell} - 1}}$ \\ \hline
(1b)    & $E > |B|$       & $1 < \frac{2E}{E - B} < \frac{z_\gamma}{\pi A \ell}$    & $+\sqrt{\frac{E^2 - B^2}{\frac{z_\gamma}{\pi A \ell} - 1}}$ \\ \hline
(2a)     & $0 < E < B$   & $\frac{2E}{E-|B|} < \frac{z_\gamma}{\pi A \ell} < \frac{2E}{E+|B|} < 1$ & $-\sqrt{\frac{E^2 - B^2}{\frac{z_\gamma}{\pi A \ell} - 1}}$ \\ \hline
(2b)     & $B < E < 0$   & $\frac{2E}{E-|B|} < \frac{z_\gamma}{\pi A \ell} < \frac{2E}{E+|B|} < 1$ & $+\sqrt{\frac{E^2 - B^2}{\frac{z_\gamma}{\pi A \ell} - 1}}$ \\ \hline
(3a)     & $E = B$       & $\ell = \frac{z_\gamma}{\pi A}$     &   $-2B < z_0 < 0$        \\ \hline
(3b)     & $E = -B$       & $\ell = \frac{z_\gamma}{\pi A}$     &  $0 < z_0 < -2B$         
\end{tabular}}
\end{center}
In all cases, the associated period is $\omega = 2\pi A\ell / z_0$ and one can choose any $u_0$ and $v_0$ such that $u_0^2 + v_0^2 = A(E^2 - (z_0 + B)^2)$.
\end{lem}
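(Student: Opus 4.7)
The plan is to impose the periodicity equation $\gamma\sigma(t) = \sigma(t+\omega)$ coordinate-by-coordinate on the explicit parameterization given in Example \ref{ex:mag geods on 3D Heis}. Because $\gamma = \exp(z_\gamma Z)$ is central, the multiplication formula \eqref{eq:multiplication law} shows that left-translation by $\gamma$ simply adds $z_\gamma$ to the $Z$-coordinate of $\log\sigma(t)$ and leaves the $X$- and $Y$-coordinates unchanged. Lemma \ref{lem:central elements translate only central 1-param subgroups} already handles one-parameter subgroups, so I restrict to spiraling geodesics, i.e., those with $z_0 \neq 0$ and $u_0^2 + v_0^2 \neq 0$.

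Matching the $x(t)$- and $y(t)$-components against $x(t+\omega)$ and $y(t+\omega)$ yields the resonance conditions \eqref{eq:resonance condition}, which force $\omega z_0 = 2\pi A\ell$ for some nonzero integer $\ell$, so $\omega = 2\pi A\ell/z_0$. Matching the $z$-component and substituting $u_0^2+v_0^2 = A(E^2 - (z_0+B)^2)$ from \eqref{eq:energy of spiraling mag geod} reduces to \eqref{eq:central periodic condition 2}, which rearranges to
\begin{align*}
    z_0^2 = \frac{E^2 - B^2}{K - 1}, \qquad K := \frac{z_\gamma}{\pi A \ell},
\end{align*}
when $E^2 \neq B^2$, and to $K = 1$ (equivalently $\ell = z_\gamma/(\pi A)$) when $E^2 = B^2$. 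The remaining constraint is the \emph{spiraling window} $(z_0 + B)^2 < E^2$, equivalently $-B - E < z_0 < -B + E$, which simply expresses that $u_0^2 + v_0^2 > 0$.

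From here the proof is a case analysis matching the rows of the table. When $E > |B|$, both $E+B$ and $E-B$ are positive and $E^2-B^2 > 0$, so $z_0^2 > 0$ requires $K > 1$; then $z_0 = \pm\sqrt{(E^2-B^2)/(K-1)}$, and imposing the one non-trivial endpoint of the window after squaring yields $K > 2E/(E+B)$ for $z_0 < 0$ (row 1a) and $K > 2E/(E-B)$ for $z_0 > 0$ (row 1b). When $E < |B|$, $E^2 - B^2 < 0$ forces $K < 1$; the window $(-B-E,-B+E)$ lies entirely in $(-\infty, 0)$ or $(0,\infty)$ according to whether $B > 0$ or $B < 0$, and this selects the sign of $z_0 = \pm\sqrt{\,\cdot\,}$. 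Both endpoints are now binding, and squaring produces the two-sided sandwich $2E/(E-|B|) < K < 2E/(E+|B|)$, giving rows (2a) and (2b). Finally, when $E^2 = B^2$ the central equation fixes $\ell = z_\gamma/(\pi A)$, while the window $(z_0+B)^2 < B^2$ reduces to $z_0(z_0+2B) < 0$, yielding rows (3a) and (3b). In every case the period formula $\omega = 2\pi A\ell/z_0$ is immediate, and any $(u_0, v_0)$ on the circle $u_0^2+v_0^2 = A(E^2-(z_0+B)^2)$ gives a valid $\gamma$-periodic geodesic.

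The only non-routine step is keeping inequality directions straight when squaring and dividing by quantities such as $B \pm E$ or $1-K$ whose signs depend on the case. I plan to handle this by first recording the sign of every relevant factor within each case, and by translating the spiraling window directly into the linear bounds $-B-E < z_0$ and $z_0 < -B+E$ (treated independently) rather than squaring first. Once that bookkeeping is in place each row reduces to a one-line algebraic manipulation.
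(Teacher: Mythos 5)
Your proposal is correct and follows essentially the same route as the paper: derive $\omega z_0 = 2\pi A\ell$ from the resonance conditions, reduce the central matching condition via the energy constraint to $z_0^2 = (E^2-B^2)/(K-1)$ with $K = z_\gamma/(\pi A\ell)$ (or $K=1$ when $E=|B|$), and then translate the spiraling window $(z_0+B)^2 < E^2$ into the one- or two-sided bounds on $K$ by the same case analysis on the signs of $-B-E$ and $E-B$. The paper likewise treats the two endpoints of the window as independent linear bounds before squaring, so your bookkeeping strategy coincides with theirs.
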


\begin{proof}
The condition $(z_0 + B)^2 < E^2$ is equivalent to
\begin{align} \label{eq:basic energy constraint}
    -E-B < \pm \sqrt{\frac{E^2 - B^2}{\frac{z_\gamma}{\pi A \ell} - 1}} < E-B.
\end{align}

In case (1), $-E - B < 0$ and $E - B > 0$, so this leads to the two inequalities
\begin{align*}
    -E-B < - \sqrt{\frac{E^2 - B^2}{\frac{z_\gamma}{\pi A \ell} - 1}} < 0, \hspace{2cm} 0 < \sqrt{\frac{E^2 - B^2}{\frac{z_\gamma}{\pi A \ell} - 1}} < E - B.
\end{align*}
After squaring both inequalities and isolating $z_\gamma/(\pi A \ell)$, these become
\begin{align*}
    1 < \frac{2E}{E+B} < \frac{z_\gamma}{\pi A \ell} \hspace{2cm} 1 < \frac{2E}{E-B} < \frac{z_\gamma}{\pi A \ell}
\end{align*}
yielding cases (1a) and (1b), respectively. Notice that one of these ranges for $\ell$ is a subset of the other. We keep them separate as they affect the choice of sign for $z_0$.

In case (2), either $-B-E < -B+E < 0$ if $B > 0$, or $0 < -B-E < -B+E < 0$ if $B < 0$. A similar computation as above leads to the inequalities
\begin{align*}
    \frac{2E}{E-B} < \frac{z_\gamma}{\pi A \ell} < \frac{2E}{E + B} < 1, \hspace{1cm} \frac{2E}{E+B} < \frac{z_\gamma}{\pi A \ell} < \frac{2E}{E-B} < 1.
\end{align*}
Both of these ranges can be expressed simultaneously in terms of $|B|$ as in the Lemma statement. However, in case (2a), when $B> 0$, $z_0$ is chosen according to the negative branch, and vice versa in case (2b).

For case (3), it was noted above \eqref{eq:central periodic condition 3} that if $E = |B|$, then $z_\gamma = \pi A \ell$. Choose $z_0$ so that $(z_0 + B)^2 < E^2$. When $B>0$, this inequality is the same as $-2B < z_0 < 0$. Setting $\omega = 2\pi A \ell / z_0 = 2z_\gamma / z_0$, it is straightforward to check that $\frac{z_0}{A}(t + \omega) = \frac{z_0}{A}t$ and that \eqref{eq:central periodic condition 1} holds. The case when $B<0$ is handled similarly.
\end{proof}

\begin{rmk}
In case (2), the condition that $E< |B|$ ensures that $E^2 - B^2 < 0$, while the conditions on $\ell$ ensure that $z_\gamma/(\pi A \ell) - 1 < 0$. Hence the expression under the radical in $z_0$ will be positive.
\end{rmk}

\begin{rmk}
In every case, for each admissible $z_0$ there is a 1-parameter family of $\gamma$-periodic magnetic geodesics.
\end{rmk}


\begin{rmk} \label{rmk:sometimes critical case is empty}
The cases where $E = |B|$ are to be interpreted as follows. When $z_\gamma$ and $A$ are such that $z_\gamma / \pi A \in \bb{Z}$, then there exist $\gamma$-periodic magnetic geodesics with energy $E$ and $z_0$ as described in the table. Otherwise, the collection of such magnetic geodesics is empty.
\end{rmk}

\subsection{Lengths of Closed Magnetic Geodesics}

We are now in a position to compute the lengths of closed magnetic geodesics on $\Gamma \backslash H$ in the free homotopy class  of $\gamma \in \Gamma$. If $\Gamma < H$ is a cocompact discrete subgroup, and $\gamma \in \Gamma$, then the length of the corresponding closed magnetic geodesic on the compact quotient $\Gamma \backslash H$ will be
\begin{align} \label{eq:length of gamma-periodic geodesic}
    \int_0^{|\omega|} |\sigma'(t)| dt = E|\omega|.
\end{align}
Previous results results concerning the lengths of closed geodesics in the Riemannian case include \cite{gordon1986spectrum}, \cite{eberlein1994geometry}, \cite{gornetmast2000lengthspectrum}, \cite{gornetmast2003}. Unlike the Riemannian case, magnetic geodesics cannot be reparamterized to have a different energy. So it is more natural to consider the collection of lengths of closed geodesics of a fixed energy. Let $L(\gamma; E)$ denote the set of distinct lengths of closed magnetic geodesics in the free homotopy class of $\gamma$.

\begin{thm} \label{thm:periods of central elt in 3D Heisenberg}
Let $\Gamma < H$ be a cocompact discrete subgroup of the Heisenberg group $H$ and let $\gamma = \exp(V_\gamma + z_\gamma Z) \in \Gamma$. 
\begin{itemize}
    \item If $\gamma = e$ is the identity ($V_\gamma = 0$ and $z_\gamma = 0$), then 
    \begin{align} \label{eq:lengths in trivial free homotopy class}
        \displaystyle L(e;E) = \begin{cases} \emptyset & \text{if } E \geq |B| \\ \left\{\frac{2\pi A}{\sqrt{\frac{B^2}{E^2} - 1}}\right\} & \text{if } 0 < E < |B| \end{cases}
    \end{align}
    \item If $\gamma$ is not central ($V_\gamma \neq 0$) then 
    \begin{align} \label{eq:lengths in noncentral free homotopy classes}
        \displaystyle L(\gamma;E) = \begin{cases} \emptyset & \text{if } 0 < E \leq |B| \\ \left\{ \frac{|V_\gamma|}{\sqrt{1 - \frac{B^2}{E^2}}} \right\} & \text{if } E > |B| \end{cases}
    \end{align}
    \item If $\gamma$ is central ($V_\gamma = 0$ and $z_\gamma \neq 0$) then
    \begin{align} \label{eq:lengths in central free homotopy classes}
        & L(\gamma; E) = \\ & \begin{cases} \left\{ \frac{\sqrt{4 \pi A \ell (z_\gamma - \pi A \ell)}}{\sqrt{1 - \frac{B^2}{E^2}}} : \ell \in \bb{Z},  \frac{2E}{E + |B|} < \frac{z_\gamma}{\pi A \ell} \right\} \cup \left\{ |z_\gamma| \right\} & E > |B| \\  \left\{ \frac{\sqrt{4 \pi A \ell (\pi A \ell - z_\gamma)}}{\sqrt{\frac{B^2}{E^2} - 1}} : \ell \in \bb{Z}, \frac{2E}{E-|B|} < \frac{z_\gamma}{\pi A \ell} < \frac{2E}{E+|B|} \right\} \cup \left\{ |z_\gamma| \right\} & 0 < E < |B| \\  \left\{ \frac{2E |z_\gamma|}{|z_0|} : z_0 \in \bb{R}, (z_0 + B)^2 < E^2 \right\} \cup \left\{ |z_\gamma| \right\} & E = |B| \end{cases} \nonumber
    \end{align}
\end{itemize}
\end{thm}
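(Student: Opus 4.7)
The plan is to combine the length formula $E|\omega|$ from \eqref{eq:length of gamma-periodic geodesic} with the previously established classifications of $\gamma$-periodic magnetic geodesics, and compute $E|\omega|$ explicitly in each of the three cases listed. The organization of the proof mirrors the three bullet points of the statement.

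For the identity case $\gamma = e$, closed contractible magnetic geodesics are governed by Theorem \ref{thm:closed contractible magnetic geodesics}: they exist precisely when $0 < E < |B|$, with $z_0 = -\sgn(B)\sqrt{B^2 - E^2}$ and period $\omega = 2\pi A / z_0$. Substituting $|\omega| = 2\pi A / \sqrt{B^2 - E^2}$ and factoring out $E$ gives the stated formula. For the non-central case $V_\gamma \neq 0$, Lemma \ref{lem:non-central elts only translate non-spiraling mag goeds} forces the translated magnetic geodesic to be a non-central 1-parameter subgroup, and Theorem \ref{thm:gamma-periodic mag geods when gamma is not central} then says that such geodesics exist iff $E > |B|$ with $|\omega| = |V_\gamma|/\sqrt{E^2 - B^2}$. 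Multiplying by $E$ and dividing numerator and denominator inside the radical by $E^2$ produces the advertised form.

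The central case $\gamma = \exp(z_\gamma Z)$, $z_\gamma \neq 0$, is where the real work lies. By Lemma \ref{lem:non-central elts only translate non-spiraling mag goeds} and Lemma \ref{lem:central elements translate only central 1-param subgroups}, the $\gamma$-periodic magnetic geodesics split into (i) purely central 1-parameter subgroups $\sigma(t) = \exp(t(\pm E)Z)$, which contribute the single length $|z_\gamma|$ at every energy, and (ii) spiraling magnetic geodesics described by Lemma \ref{lem:effect of energy constraint on range of ells}. For the spiraling contribution in the regimes $E > |B|$ and $0 < E < |B|$, I would take the formula $z_0^2 = (E^2 - B^2)/(z_\gamma/(\pi A \ell) - 1)$ from \eqref{eq:central periodic condition 3} together with $\omega = 2\pi A \ell / z_0$, and compute
\begin{align*}
    E^2 \omega^2 = \frac{4\pi^2 A^2 \ell^2 E^2}{z_0^2} = \frac{4\pi A \ell(z_\gamma - \pi A \ell)\, E^2}{E^2 - B^2}
\end{align*}
after clearing denominators. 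Taking a square root, and choosing to factor out $\sqrt{1 - B^2/E^2}$ when $E > |B|$ (and $\sqrt{B^2/E^2 - 1}$ when $E < |B|$, at which point the sign of $z_\gamma - \pi A\ell$ flips to match the rearrangement $\pi A \ell - z_\gamma$), yields the two main formulas. The admissibility ranges for $\ell$ in the table of Lemma \ref{lem:effect of energy constraint on range of ells} translate directly into the constraints on $z_\gamma/(\pi A \ell)$ written in the statement. For the resonance case $E = |B|$, Lemma \ref{lem:effect of energy constraint on range of ells} gives a unique admissible $\ell = z_\gamma/(\pi A)$ with a whole interval of admissible $z_0$ satisfying $(z_0 + B)^2 < E^2$, and then $\omega = 2\pi A \ell / z_0 = 2z_\gamma/z_0$ immediately gives $E|\omega| = 2E|z_\gamma|/|z_0|$.

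The main obstacle is bookkeeping rather than conceptual: sign conventions for $z_0$ and $\ell$, the reduction of the inequalities in Lemma \ref{lem:effect of energy constraint on range of ells} to the single form $2E/(E + |B|) < z_\gamma/(\pi A \ell)$ or $2E/(E-|B|) < z_\gamma/(\pi A \ell) < 2E/(E+|B|)$, and verifying that the quantity $\ell(z_\gamma - \pi A \ell)$ (resp.\ $\ell(\pi A \ell - z_\gamma)$) under the radical is nonnegative throughout the admissible range. Once these sign checks are made, the theorem follows by direct substitution, and the singleton $\{|z_\gamma|\}$ from the central 1-parameter subgroups is adjoined in each subcase.
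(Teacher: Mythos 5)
Your proposal is correct and follows essentially the same route as the paper: each bullet is obtained by multiplying the periods from Theorem \ref{thm:closed contractible magnetic geodesics}, Theorem \ref{thm:gamma-periodic mag geods when gamma is not central}, Lemma \ref{lem:central elements translate only central 1-param subgroups}, and Lemma \ref{lem:effect of energy constraint on range of ells} by $E$ and simplifying, with the central spiraling case handled exactly via $z_0^2 = (E^2-B^2)/(z_\gamma/(\pi A \ell) - 1)$ and $\omega = 2\pi A\ell/z_0$. The only cosmetic difference is that you square $E\omega$ before extracting the root whereas the paper substitutes $1/z_0$ directly; the bookkeeping items you flag (sign of the radicand, merging of cases (1a)/(1b)) are handled the same implicit way in the paper.
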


\begin{proof}
The case when $\gamma = e$ follows from Theorem \ref{thm:closed contractible magnetic geodesics}. The lengths of closed magnetic geodesics obtained in that theorem is
\begin{align*}
    E|\omega| = E \left| \frac{2\pi A}{-\sgn(B)\sqrt{B^2 - E^2}} \right| = \frac{2\pi AE}{\sqrt{B^2 - E^2}}
\end{align*}
The case when $\gamma = \exp(V_\gamma + z_\gamma Z)$ is not central follows from Theorem \ref{thm:gamma-periodic mag geods when gamma is not central}. The length of closed magnetic geodesics obtain in that theorem is
\begin{align*}
    E|\omega| = E \left| \frac{|V_\gamma|}{\sqrt{E^2 - B^2}} \right| = \frac{E|V_\gamma|}{\sqrt{E^2 - B^2}}.
\end{align*}
The case when $\gamma$ is central follows from Lemma \ref{lem:central elements translate only central 1-param subgroups} and Lemma \ref{lem:effect of energy constraint on range of ells}. In the former case, which applies to every energy, the length of the closed magnetic geodesic is
\begin{align*}
    E|\omega| = E \left| \frac{z_\gamma}{\pm E} \right| = |z_\gamma|.
\end{align*}
In the latter case, when $E > |B|$ the lengths are
\begin{align*}
    E|\omega| =  E \left| \frac{2\pi A \ell}{z_0} \right| = 2\pi AE \ell \left| \pm \sqrt{\frac{\frac{z_\gamma}{\pi A \ell} - 1}{E^2 - B^2}} \right| = \frac{2E \sqrt{\pi A \ell (z_\gamma - \pi A \ell)}}{\sqrt{E^2 - B^2}}
\end{align*}
and when $E < |B|$ the lengths are
\begin{align*}
    E|\omega| =  E \left| \frac{2\pi A \ell}{z_0} \right| = 2\pi AE \ell \left| \pm \sqrt{\frac{1 - \frac{z_\gamma}{\pi A \ell}}{B^2 - E^2}} \right| = \frac{2E \sqrt{\pi A \ell (\pi A \ell - z_\gamma)}}{\sqrt{B^2 - E^2}}.
\end{align*}
The lengths when $E = |B|$ depend not on $\ell$ (which must be $\ell = z_\gamma / (\pi A)$) but instead on $z_0$ and are given by
\begin{align*}
    E|\omega| = E\left| \frac{2\pi A \ell}{z_0} \right| = E\left| \frac{2 z_\gamma }{z_0} \right|.
\end{align*}
\end{proof}

\begin{rmk}
As $E \to \infty$ or $B \to 0$, the denominator $\sqrt{1 - B^2/E^2} \to 1$. Roughly speaking, the cases $E \leq |B|$ will be eliminated, and the collection of lengths in the case $E > |B|$ will approach the length spectrum in the Riemannian case, which was computed in \cite{gornetmast2000lengthspectrum}. This reflects the following physical intuition: when the magnetic field is very weak charged particles will behave more like they would in the absence of any forces, and when a particle is very energetic the magnetic field will have less of an effect on its trajectory.
\end{rmk}

\begin{rmk}
The dynamics of the magnetic flow on the various energy levels splits roughly into three regimes:
\begin{itemize}
    \item For fixed energy levels $E > |B|$, there exist closed magnetic geodesics in every free homotopy class and the set of their lengths is finite.
    \item For fixed energy levels $E<|B|$, there exist free homotopy classes without any closed magnetic geodesics, and in the case that there are closed magnetic geodesics, the set of their lengths is countably infinite. This reflects the paradigm that the dynamics on high energy levels will resemble that of the underlying geodesic flow.
    \item Finally, when $E = |B|$,  $\gamma$ is central, and $z_\gamma \in \pi A \bb{Z}$ (i.e. the set of lengths is nonempty),  then the infinite set of lengths is not discrete.
\end{itemize}
\end{rmk}

The following three lemmas address bounds on the collection of lengths of closed magnetic geodesics in a given central free homotopy class.

\begin{lem} \label{lem:upper bound for supercritical lengths}
Consider the case $|B|<E$ in \eqref{eq:lengths in central free homotopy classes}. The set 
\begin{align*}
    \left\{ \frac{\sqrt{4 \pi A \ell (z_\gamma - \pi A \ell)}}{\sqrt{1 - \frac{B^2}{E^2}}} : \ell \in \bb{Z},  \frac{2E}{E + |B|} < \frac{z_\gamma}{\pi A \ell} \right\}
\end{align*}
is bounded above by $|z_\gamma|/\sqrt{1-B^2/E^2},$ which is larger than $|z_\gamma|.$ The example below shows that this upper bound is the best possible.
\end{lem}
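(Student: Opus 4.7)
\textbf{Proof plan for Lemma \ref{lem:upper bound for supercritical lengths}.}

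The plan is to view each element of the set as the value of a quadratic in the integer parameter $\ell$, then to apply a single-variable AM-GM (or completing-the-square) estimate. Set
\[
f(\ell) = 4 \pi A \ell \, (z_\gamma - \pi A \ell),
\]
so that the lengths in the set are exactly $\sqrt{f(\ell)}/\sqrt{1 - B^2/E^2}$ as $\ell$ ranges over the admissible integers.

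First I would verify that for every admissible $\ell$, the quantity $\pi A \ell$ lies strictly between $0$ and $z_\gamma$, so that $f(\ell) \ge 0$. Since $E > |B|$ implies $2E/(E+|B|) > 1$, the constraint $2E/(E+|B|) < z_\gamma/(\pi A \ell)$ forces $z_\gamma/(\pi A \ell) > 1$, so $\pi A \ell$ has the same sign as $z_\gamma$ and satisfies $|\pi A \ell| < |z_\gamma|$. In particular $\pi A \ell$ and $z_\gamma - \pi A \ell$ have the same sign, and $f(\ell) \ge 0$.

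Next, I would apply AM-GM to the product $(\pi A \ell)(z_\gamma - \pi A \ell)$, whose two factors sum to $z_\gamma$ and have the same sign. This yields
\[
(\pi A \ell)(z_\gamma - \pi A \ell) \le \frac{z_\gamma^2}{4},
\]
with equality only when $\pi A \ell = z_\gamma/2$. Therefore $\sqrt{f(\ell)} \le |z_\gamma|$ and, dividing by $\sqrt{1 - B^2/E^2}$, the entire set is bounded above by $|z_\gamma|/\sqrt{1 - B^2/E^2}$, as claimed.

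Finally, to see that this bound is strictly larger than $|z_\gamma|$, observe that $|B| < E$ together with $B \neq 0$ gives $0 < \sqrt{1 - B^2/E^2} < 1$, so $|z_\gamma|/\sqrt{1 - B^2/E^2} > |z_\gamma|$. The main (and essentially only) subtlety is the sign bookkeeping in the first step, since $z_\gamma$ can be negative and the admissibility condition is phrased as an inequality on a ratio; once that is handled, the AM-GM estimate is immediate and the bound is sharp because there exist lattices $\Gamma$ and parameters $A$ for which $\pi A \ell$ can be made arbitrarily close to $z_\gamma/2$, as the subsequent example will demonstrate.
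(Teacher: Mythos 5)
Your proof is correct and takes essentially the same route as the paper: the paper maximizes the downward-opening parabola $\lambda(\ell)=4\pi A\ell(z_\gamma-\pi A\ell)/(1-B^2/E^2)$ at its vertex $\ell=z_\gamma/(2\pi A)$, which is exactly your AM-GM estimate on the two factors summing to $z_\gamma$ (the paper reduces the sign bookkeeping to the case $z_\gamma>0$ by symmetry rather than arguing it directly as you do, but this is immaterial).
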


\begin{proof}
Without loss of generality, we assume $z_\gamma>0.$ The condition on $\ell$ implies $0<\ell<\frac{z_\gamma}{2\pi A}\left(1+\frac{|B|}{E}\right). $  We define
\begin{align*}
    \lambda(\ell)=\frac{4 \pi A \ell (z_\gamma - \pi A \ell)}{\left(1 - \frac{B^2}{E^2}\right)}.
\end{align*}
The parabola $\lambda(\ell)$ opens downward and has zeroes at $\ell=0$ and $\ell=z_\gamma/\pi A,$ hence achieves a maximum of $z^2_\gamma/\left(1-B^2/E^2\right)$ at $\ell=z_\gamma/2\pi A.$ See the example below for values of $A,B,z_\gamma$ such that this maximum is achieved. The result follows.
\end{proof}

\begin{ex} \label{ex:maximal length need not be straight line}
Consider the particular example where $A = 1$, $B = 1$ and $E=2$. Choose the central element $\gamma = \exp(20\pi Z)$ so that $z_\gamma = 20\pi$. In this case, for each $\ell$ such that $0 < \ell < 15$, there is a closed magnetic geodesic with length given by \eqref{eq:lengths in central free homotopy classes}. In particular, when $\ell = 10$ the corresponding length is $(2/\sqrt{3})20 \pi > z_\gamma$.
\end{ex}

\begin{rmk} \label{rmk:maximal mag length spec not well behaved}
In the setting of Riemannian two-step nilmanifolds, the maximal length of a closed magnetic geodesic in a central free homotopy class is the length of the central geodesic. In fact, the maximal length spectrum determines the length spectrum for central free homotopy classes (see Proposition 5.15 of \cite{eberlein1994geometry}). Example \ref{ex:maximal length need not be straight line} shows that this is no longer true in the magnetic setting.
\end{rmk}

\begin{lem}
Consider the case $|B|>E$ in \eqref{eq:lengths in central free homotopy classes}. The set
\begin{align}
    \left\{ \frac{\sqrt{4 \pi A \ell (\pi A \ell - z_\gamma)}}{\sqrt{\frac{B^2}{E^2} - 1}} : \ell \in \bb{Z}, \frac{2E}{E-|B|} < \frac{z_\gamma}{\pi A \ell} < \frac{2E}{E+|B|} \right\} 
\end{align}
is bounded below by $|z_\gamma|$. 
\end{lem}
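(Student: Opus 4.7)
The plan is to square the claimed inequality and recognize it as a quadratic condition on $u := \pi A \ell$ whose solution set is exactly the admissibility condition on $\ell$. Squaring $L_\ell \geq |z_\gamma|$, where $L_\ell$ denotes the length in question, and clearing the denominator $B^2/E^2 - 1$, the inequality becomes
\begin{align*}
    4E^2 u(u - z_\gamma) \geq z_\gamma^2(B^2 - E^2).
\end{align*}
Rearranging and completing the square on the left yields the equivalent form
\begin{align*}
    E^2(2u - z_\gamma)^2 \geq B^2 z_\gamma^2,
\end{align*}
which factors as a difference of squares:
\begin{align*}
    \bigl(2uE - (E+|B|)z_\gamma\bigr)\bigl(2uE - (E-|B|)z_\gamma\bigr) \geq 0.
\end{align*}
The factorization is independent of the sign of $B$, since $\{E+B, E-B\} = \{E+|B|, E-|B|\}$. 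The roots of this upward-opening quadratic in $u$ are $u_\pm = (E\pm|B|)z_\gamma/(2E)$, and because $|B| > E > 0$, these roots have opposite signs regardless of the sign of $z_\gamma$.

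Next, I would translate the admissibility condition $\frac{2E}{E-|B|} < \frac{z_\gamma}{u} < \frac{2E}{E+|B|}$ directly into the condition $u < \min(u_+, u_-)$ or $u > \max(u_+, u_-)$. Since the left endpoint $\frac{2E}{E-|B|}$ is negative and the right endpoint $\frac{2E}{E+|B|}$ is positive, the sign of $u$ (equivalently, the sign of $z_\gamma/u$) splits the analysis into two cases. In each case, clearing denominators while carefully flipping the inequality when multiplying by a negative quantity reduces the admissibility condition to exactly the statement that $u$ lies outside the closed interval with endpoints $u_+, u_-$. This is precisely the region where the factored quadratic is non-negative, so the desired inequality $L_\ell \geq |z_\gamma|$ holds.

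The main (mild) obstacle is the sign bookkeeping in the equivalence between the admissibility condition on $z_\gamma/u$ and the two-sided condition on $u$, since $z_\gamma$ and $u$ may each be positive or negative. A short four-way case analysis on the signs of $z_\gamma$ and $u$ handles this. One further observation worth noting: because the admissibility interval for $z_\gamma/u$ is open, the argument in fact produces strict inequality $L_\ell > |z_\gamma|$; the bound $|z_\gamma|$ is never attained but is approached as $\frac{z_\gamma}{\pi A \ell}$ tends to either endpoint $\frac{2E}{E \pm |B|}$ of the admissible interval.
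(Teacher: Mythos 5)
Your proof is correct and takes essentially the same route as the paper's: both hinge on the observation that the admissibility condition places $u=\pi A\ell$ strictly outside the interval with endpoints $(E\pm|B|)z_\gamma/(2E)$, which is precisely the set where $4\pi A\ell(\pi A\ell-z_\gamma)\geq z_\gamma^2(B^2/E^2-1)$ (the paper finishes by monotonicity of the parabola $\lambda(\ell)$ and evaluation at those endpoints, you by completing the square and factoring). The only quibble is your closing parenthetical: since $\ell$ ranges over integers, the values $\pi A\ell$ form a discrete set, so the infimum of the actual length set is generically strictly larger than $|z_\gamma|$ and the bound is not in fact \emph{approached}; this is harmless, as the lemma claims only a lower bound.
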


\begin{proof}
Without loss of generality, we assume $z_\gamma>0.$
We define  $$\lambda(\ell)=\frac{4 \pi A \ell (\pi A \ell-z_\gamma)}{\left(\frac{B^2}{E^2}-1\right)}.$$
The parabola $\lambda(\ell)$ opens upward and has zeroes at $\ell=0$ and $\ell=z_\gamma/\pi A.$
The condition on $\ell$ implies $\ell>\frac{z_\gamma}{2\pi A}\left(1+\frac{|B|}{E}\right)>\frac{z_\gamma}{\pi A}$ or $\ell<\frac{z_\gamma}{2\pi A}\left(1-\frac{|B|}{E}\right)<0.$  A lower bound of the set is thus provided by the minimum of 
$\sqrt{\lambda\left(\frac{z_\gamma}{2\pi A}\left(1+\frac{|B|}{E}\right)\right)}$ 
and $\sqrt{\lambda\left(\frac{z_\gamma}{2\pi A}\left(1-\frac{|B|}{E}\right)\right)}.$
However, both of these evaluate to $z_\gamma,$ and the result follows.
\end{proof}

\begin{lem}
Consider the case $|B|=E$ in \eqref{eq:lengths in central free homotopy classes}. If the set
\begin{align*}
\left\{ \frac{2E |z_\gamma|}{|z_0|} : z_0 \in \bb{R}, (z_0 + B)^2 < E^2 \right\}    
\end{align*}
is nonempty (see Remark \ref{rmk:sometimes critical case is empty}), then it is unbounded above and has an infimum of $|z_\gamma|.$
\end{lem}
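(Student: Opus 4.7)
The plan is to reduce the description of the set to a simple range of the variable $|z_0|$, from which the claims about the supremum and infimum are immediate.

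First I would unpack the constraint $(z_0 + B)^2 < E^2$ under the hypothesis $E = |B|$. Substituting $E^2 = B^2$ gives $|z_0 + B| < |B|$. A quick case split on $\sgn(B)$ shows this is equivalent to $-2B < z_0 < 0$ when $B > 0$ and $0 < z_0 < -2B$ when $B < 0$; in either case $z_0 \neq 0$ and $|z_0|$ ranges over the open interval $(0, 2|B|) = (0, 2E)$. So after passing to the variable $|z_0|$, the set in question can be rewritten as
\begin{align*}
    \left\{ \frac{2E |z_\gamma|}{s} : s \in (0, 2E) \right\}.
\end{align*}

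Next I would observe that the map $s \mapsto 2E|z_\gamma|/s$ is continuous and strictly decreasing on $(0, 2E)$, so it is a bijection from $(0, 2E)$ onto its image. Taking the one-sided limits at the endpoints, $\lim_{s \to 0^+} 2E|z_\gamma|/s = +\infty$ (giving unboundedness from above) and $\lim_{s \to (2E)^-} 2E|z_\gamma|/s = |z_\gamma|$. Hence the set equals the open half-line $(|z_\gamma|, \infty)$, whose infimum is $|z_\gamma|$ and which is not attained. There is no real obstacle here; the only subtle point to flag is that this infimum is strict, in contrast to the bounds obtained in the two preceding lemmas, which reflects the fact that the level set $E = |B|$ is genuinely critical in the sense of Remark \ref{rmk:sometimes critical case is empty}.
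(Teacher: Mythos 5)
Your proof is correct and follows essentially the same route as the paper's: identify the admissible range of $z_0$ from $(z_0+B)^2 < E^2 = B^2$ and examine the limiting behavior of $2E|z_\gamma|/|z_0|$ at the two endpoints. The only difference is cosmetic — you unify the two sign cases by passing to $s = |z_0| \in (0,2E)$, whereas the paper treats $B>0$ explicitly and declares $B<0$ analogous.
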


\begin{proof}
If $B > 0$, then $z_0$ can be chosen in the interval $-2B < z_0 < 0$. As $z_0 \to 0^-$, the length diverges to infinity, and as $z_0 \to (-2B)^+$ the lengths converge to $|z_\gamma|$. The case when $B<0$ is analogous.
\end{proof}


\subsection{Density of Closed Magnetic Geodesics}


Given a Riemannian manifold $M$, define $S^E M=\{V \in TM: |V| = E\}$ and let $S_\gamma^E M$ denote the tangent sphere of radius $E$ at the point $\gamma$. Given a vector $V \in TM$, let $\sigma_V$ denote the magnetic geodesic such that $\sigma_V'(0) = V$. We are interested in the size of the set of vectors that determine periodic magnetic geodesics. In the Riemannian case, this set is scale invariant. That is, if $V$ determines a periodic geodesic, then so does $cV$ for any $c \neq 0$. So it is natural in this case to restrict attention to unit vectors. However, this property does not hold  for magnetic geodesics. Therefore, in the following definition we include a dependence on the energy of the vectors.
\begin{align} \label{eq:defn of set of periodic vectors}
    \Per^E(M) := \{ V \in S^E M \ : \ \sigma_V \text{ is periodic} \} \subset S^EM.
\end{align}
In the context of Riemannian two-step nilmanifolds, the density of this set was first investigated in \cite{eberlein1994geometry}, and subsequently in \cite{mast1994closedgeods}, \cite{mast1997resonance}, \cite{leepark1996density}, \cite{demeyer2001compactnilmanifolds}, \cite{decoste2008chevalleyratstructs}. The following result shows that for magnetic flows on the Heisenberg group, density persists for sufficiently high energy.

\begin{thm} \label{thm:density of closed mag geods}
For each $E > |B|$, $\Per^E(\Gamma \backslash H)$ is dense in $S^E(\Gamma \backslash H)$.
\end{thm}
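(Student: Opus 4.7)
Using the left-trivialization $T(\Gamma\backslash H)\cong(\Gamma\backslash H)\times\mf h$, a vector in $S^E(\Gamma\backslash H)$ is specified by a basepoint $y\in\Gamma\backslash H$ and a vector $W=W_{\mf v}+cZ\in\mf h$ with $|W_{\mf v}|^2+c^2=E^2$. The plan is to establish density first for all target pairs $(y_0,W_0)$ whose central component $c_0$ lies in the open set $(-E,E)\setminus\{B\}$, by explicitly producing approximating tangent vectors from spiraling closed magnetic geodesics associated to central lattice elements, and then to recover the remaining pairs (those with $c_0=B$ or $|c_0|=E$) by a continuity argument in the fiber.

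The central part $\Gamma\cap Z(H)$ is cocompact and discrete in the one-dimensional center, so it has the form $\exp(w\bb Z\,Z)$ for some $w>0$. For each $\gamma=\exp(nwZ)\in\Gamma$ and admissible $\ell\in\bb Z$, Lemma \ref{lem:effect of energy constraint on range of ells} produces spiraling $\gamma$-periodic magnetic geodesics of energy $E$ through the identity with central parameter satisfying $z_0^2=(E^2-B^2)/\bigl(nw/(\pi A\ell)-1\bigr)$. As $(n,\ell)\in\bb Z_{\neq 0}^2$ varies, the ratios $nw/(\pi A\ell)$ range over $\bb Q\cdot(w/\pi A)$, a dense subset of $\bb R$; restricting to the admissibility windows $r>2E/(E+B)$ and $r>2E/(E-B)$ from cases (1a), (1b) of that lemma and applying the continuous map $r\mapsto\pm\sqrt{(E^2-B^2)/(r-1)}$, the achievable values of $z_0+B$ form a dense subset of $(-E,B)\cup(B,E)=(-E,E)\setminus\{B\}$. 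Given a target $(y_0,W_0)$ with $c_0\in(-E,E)\setminus\{B\}$ and any $\varepsilon>0$, I would choose $(\gamma,\ell)$ so that $|z_0+B-c_0|<\varepsilon$ and choose $(u_0,v_0)$ with $u_0^2+v_0^2=A\bigl(E^2-(z_0+B)^2\bigr)$ and with $(u_0/A,v_0/A)$ parallel to $W_{\mf v,0}$. The resulting spiraling magnetic geodesic $\sigma_0$ through the identity has initial velocity $V'+(z_0+B)Z$, where $V'$ is parallel to $W_{\mf v,0}$ with $|V'|=\sqrt{E^2-(z_0+B)^2}$, and is within $O(\varepsilon)$ of $W_0$ in $\mf h$.

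Crucially, because $\gamma$ is central it commutes with every $h\in H$, so for any lift $\tilde y_0\in H$ of $y_0$ the translated curve $\tilde\sigma(t):=\tilde y_0\sigma_0(t)$ satisfies $\gamma\tilde\sigma(t)=\tilde y_0\gamma\sigma_0(t)=\tilde y_0\sigma_0(t+\omega)=\tilde\sigma(t+\omega)$, and hence is still $\gamma$-periodic and descends to a closed magnetic geodesic on $\Gamma\backslash H$ passing through $y_0$ at $t=0$. By left-invariance its initial tangent vector in left-trivialization is exactly $\sigma_0'(0)$, which is $\varepsilon$-close to $W_0$, finishing the argument in the regime $c_0\in(-E,E)\setminus\{B\}$. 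The boundary cases follow by continuity on the energy sphere: any $W_0=W_{\mf v}+BZ$ with $|W_{\mf v}|=\sqrt{E^2-B^2}$ is the limit in $\mf h$ of vectors $W_0'=W'_{\mf v}+c'Z$ with $c'\to B$, $c'\neq B$, and $W'_{\mf v}$ parallel to $W_{\mf v}$ of norm $\sqrt{E^2-c'^2}$, each approximable by the construction above at the same basepoint $y_0$; and $W_0=\pm EZ$ is handled analogously by letting $c'\to\pm E$ so that $|W'_{\mf v}|\to 0$. The step I expect to demand the most care is the density assertion for $z_0+B$: one must verify that each admissibility window from Lemma \ref{lem:effect of energy constraint on range of ells} meets $\bb Q\cdot(w/\pi A)$ in a subset whose image under the nonlinear map $r\mapsto\pm\sqrt{(E^2-B^2)/(r-1)}+B$ exhausts a dense subset of the full interval $(-E,E)\setminus\{B\}$, rather than only some smaller piece.
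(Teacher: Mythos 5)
Your proposal is correct and follows essentially the same route as the paper's proof: reduce to the identity fiber via left-translation (using that central $\gamma$ is fixed by conjugation), invoke cases (1a)/(1b) of Lemma \ref{lem:effect of energy constraint on range of ells} for $\gamma$ ranging over the central lattice $\exp(w\bb{Z}Z)$, and push the dense set of ratios $nw/(\pi A\ell)$ through the monotone map $r\mapsto\pm\sqrt{(E^2-B^2)/(r-1)}$ to get a dense set of admissible $z_0$, hence a dense set of latitudes on the energy sphere. The only cosmetic difference is that you treat the boundary latitudes $c_0=B$ and $c_0=\pm E$ by an explicit limiting argument, whereas the paper absorbs them by observing that density of $z_0$ in the open interval gives density in the closed interval $[-B-E,-B+E]$.
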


\begin{proof}

We begin with a series of reductions. First, it suffices to show that the set of $V \in S^E(H)$ such that $\sigma_V$ is $\gamma$-periodic for some $\gamma \in \Gamma$ is dense in $S^E(H)$. For any $V \in S^E(\Gamma \backslash H)$, let $W \in \pi^{-1}(V)$ and let $\{ W_i \} \subset S^E(H)$ be such that $\sigma_{W_i}$ is $\gamma_i$-periodic for some $\gamma_i \in \Gamma$ and $W_i \to W$. Then $\{ V_i = \pi(W_i) \} \subset S^E(\Gamma \backslash H)$ is a sequence of tangent vectors such that $\sigma_{V_i}$ is periodic and $V_i \to V$.

Next, we claim that it suffices to show that the set of $W \in S^E_e H$ such that $\sigma_W$ is periodic for some $\gamma \in Z(\Gamma)$ is dense in $S^E_e H$. For if $\sigma_W$ is such a magnetic geodesic and $\phi \in H$ is any element, then $\phi \cdot \sigma_W(t)$ is a $(\phi \gamma \phi^{-1})$-periodic magnetic geodesic satisfying $\phi \cdot \sigma_V(0) = \phi$ and $(\phi \cdot \sigma_V)'(0) = L_{\phi *}(V)$. Because $\gamma$ is central, $\phi \gamma \phi^{-1} = \gamma$ and $\phi \cdot \sigma_W$ is a $\gamma$-periodic magnetic geodesic. Because $L_{\phi *} S_e^E(H) \to S_\phi^E(H)$ is a diffeomorphism, this proves the claim.

Lastly, we claim that it suffices to show that set $z_0 \in [-B-E, -B+E]$ chosen according to cases (1a) and (1b) in Lemma \ref{lem:effect of energy constraint on range of ells} (for some choice of $\gamma \in Z(\Gamma)$) is dense in $[-B-E, -B+E]$. As noted in Lemma \ref{lem:effect of energy constraint on range of ells}, for any such $z_0$ there is a one parameter family of $\gamma$-periodic magnetic geodesics given by any choice of $u_0, v_0$ such that $u_0^2 + v_0^2 = A(E^2 - (z_0 + B)^2)$. Hence if the resulting $z_0$ are dense in $[-B-E, -B+E]$, then there is a dense set of latitudes in the ellipsoid $E^2 = ((u_0^2 + v_0^2)/A) + (z_0 + B)^2 \subset \bb{R}^3$ such that those vectors yield $\gamma$-periodic magnetic geodesics for some $\gamma \in \Gamma$. The initial conditions $(u_0, v_0, z_0) \in \bb{R}^3$ determine the magnetic geodeisc $\sigma_V$ where $V = (u_0/A)X + (v_0/A)Y + (z_0 + B)Z$, showing that the set of $V \in S^E_e H$ tangent to $\gamma$-periodic magnetic geodesics ($\gamma \in \Gamma$) is dense in $S^E_e H$.

By Proposition 5.4 of \cite{eberlein1994geometry}, $\Gamma \cap Z(H) = Z(\Gamma)$ is a lattice in $Z(H)$. Hence there exists  $\bar{z} \in \bb{R}^*$ such that $\Gamma \cap Z(H) = \{ \exp(h\bar{z}Z) \ : \ h \in \bb{Z} \}$. By replacing $\bar{z}$ with $-\bar{z}$, if necessary, we can assume that $\bar{z} > 0$. Consider the set of numbers
\begin{align*}
    \left\{ \frac{h}{\ell} \ : \ h, \ell \in \bb{Z}^+ \text{ and } \left( \frac{2\pi AE}{\bar{z}(E + B)} \right) \ell < h \right\}.
\end{align*}
This set is dense in the interval $(2\pi AE/(\bar{z}(E + B)), \infty)$. Via a sequence of continuous mappings of $\bb{R}$, each of which preserves density, \begin{align*}
    \left\{ - \sqrt{\frac{E^2 - B^2}{\frac{h\bar{z}}{\pi A \ell} - 1}} \ : \ h, \ell \in \bb{Z}^+ \text{ and } \left( \frac{2 E}{E + B} \right) \ell < h \right\}
\end{align*}
is dense in the interval $(-E-B,0)$. These are preciesly the values for $z_0$ appearing in case (1a) of Lemma \ref{lem:effect of energy constraint on range of ells}. Starting instead with the set
\begin{align*}
    \left\{ \frac{h}{\ell} \ : \ h, \ell \in \bb{Z}^+ \text{ and } \left( \frac{2\pi AE}{\bar{z}(E - B)} \right) \ell < h \right\}
\end{align*}
and using a parallel sequence of transformations shows that
\begin{align*}
    \left\{ \sqrt{\frac{E^2 - B^2}{\frac{h\bar{z}}{\pi A \ell} - 1}} \ : \ h, \ell \in \bb{Z}^+ \text{ and } \left( \frac{2 E}{E - B} \right) \ell < h \right\}
\end{align*}
is dense in $(0, E - B)$. These numbers are the $z_0$ appearing in case (1b) of Lemma \ref{lem:effect of energy constraint on range of ells}. This shows the density of permissible $z_0$ in the interval $[-E-B, E-B]$ and hence the theorem.

\end{proof}

\subsection{Rigidity and the Marked Magnetic Length Spectrum}

\label{sec:rigidity}

We begin by recalling the notion of marked length spectrum for a compact Riemannian manifold $M$. For each nontrivial free homotopy class $\mc{C}$, there exists at least one smoothly closed Riemannian geodesic. Let $L(\mc{C})$ denote the collection of all lengths of smooth closed geodesics that belong to $\mc{C}$. Recall that free homotopy classes of closed curves on $M$ are in bijection with conjugacy classes of $\pi_1(M)$. If $\bar{M}$ is another compact Riemannian manifold and $\phi : \pi_1(M) \to \pi_1(\bar{M})$ is an isomorphism, then $\phi$ maps conjugacy classes of $\pi_1(M)$ bijectively onto conjugacy classes of $\pi_1(\bar{M})$. Hence $\phi$ induces a bijection $\phi_*$ of the set of free homotopy classes of closed curves on $M$ onto the set of free homotopy of classes of closed curves on $\bar{M}$. Two compact Riemannian manifolds $M$ and $\bar{M}$ are said to have the  \emph{same marked length spectrum} if there exists an isomorphism $\phi : \pi_1(M) \to \pi_1(\bar{M})$ such that $L(\phi_* \mc{C}) = L(\mc{C})$ for all nontrivial free homotopy classes of closed curves on $M$. Specializing to the case at hand, let $G_1$ and $G_2$ be two simply connected 2-step nilpotent Lie groups and $\Gamma_1 < G_1$ and $\Gamma_2 < G_2$ cocompact discrete subgroups. Then $\pi_1(\Gamma_i \backslash G_i) \simeq \Gamma_i$. With these identifications, we say the  nilmanifolds $\Gamma_1 \backslash G_1$ and $\Gamma_2 \backslash G_2$ have the same marked length spectrum if there is an isomorphism $\phi: \Gamma_1 \to \Gamma_2$ such that $L(\phi_* \mc{C}) = L(\mc{C})$ for all nontrivial free homotopy classes of closed curves on $\Gamma_1 \backslash G_2$.  See \cite{eberlein1994geometry} and \cite{gornetmast2004} for previous results on marked length spectrum rigidity of Riemannian two-step nilmanifolds.

While the above definition could be used in the context of magnetic flows on nilmanifolds, it seems more natural to modify it in light of the dependence of the dynamics on the relative magnitudes of $E$ and $|B|$. For a fixed homotopy class $\mc{C}$, the collection lengths of closed magnetic geodesics of any energy could be an infinite open interval. Therefore, let $L(\mc{C}; E)$ denote the collection of all lengths of smoothly closed magnetic geodesics that belong to $\mc{C}$ and have energy $E$. By Theorem \ref{thm:periods of central elt in 3D Heisenberg}, $L$ is not well-defined for $E \leq |B|$. In order to avoid this, we only define $L$ for $E > |B|$.

\begin{defn}
Let $H$ be the simply connected three-dimensional Heisenberg group. Let $g_1$ and $g_2$ be two left-invariant Riemannian metrics on $H$ with parameters $A_1$ and $A_2$. Let $\Omega_1$ and $\Omega_2$ be two left-invariant magnetic forms on $H$ with parameters $B_1$ and $B_2$. Let $\Gamma_1, \Gamma_2 < H$ be two cocompact discrete subgroups, and $\phi: \Gamma_1 \to \Gamma_2$ an isomorphism. The nilmanifolds $\Gamma_1 \backslash H$ and $\Gamma_2 \backslash H$ with corresponding magnetic structures are said to have the  \emph{same marked magnetic length spectrum} if $L(\phi_* \mc{C}; E_2) = L(\mc{C}; E_1)$ for some $E_1 > |B_1|$ and some $E_2 > |B_2|$ for each nontrivial free homotopy class $\mc{C}$.
\end{defn}

Even though the magnetic flow is a perturbation away from the underlying geodesic flow, it reflects enough of the underlying Riemannian geometry to exhibit a degree of geometric rigidity.

\begin{thm} \label{thm:MLS rigidity}
Let $H$ be the simply connected, three-dimensional Heisenberg group endowed with left-invariant Riemannian metric $g$ and left-invariant magnetic form $\Omega$, with corresponding parameters $A$ and $B$ respectively. Let $\Gamma_1, \Gamma_2 < H$ be two cocompact lattices. Suppose that for some $E > |B|$, the two manifolds $\Gamma_1 \backslash H$ and $\Gamma_2 \backslash H$ have the same marked magnetic length spectrum at energy $E$. Then $\Gamma_1 \backslash H$ and $\Gamma_2\backslash H$ are isometric.
\end{thm}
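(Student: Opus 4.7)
The plan is to construct an isometry $f\colon H \to H$ satisfying $f \Gamma_1 f^{-1} = \Gamma_2$, which then descends to the desired isometry of quotients. Any isometry of $(H,g)$ fixing the identity is an isometric Lie algebra automorphism, and each $R \in O(\mf{v})$ determines such an automorphism $\Psi_R$ via $\psi_R(V + zZ) = RV + (\det R)\,zZ$: this preserves the bracket since $[RV_1, RV_2] = (\det R)\,[V_1, V_2]$, and it preserves the metric separately on $\mf{v}$ and $\mf{z}$. So I will search for $f$ of the form $f = L_{\exp W}\circ \Psi_R$ with $R \in O(\mf{v})$ and $W \in \mf{v}$.

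First I extract $R$ from the non-central marked length spectrum. The group isomorphism $\phi\colon \Gamma_1 \to \Gamma_2$ preserves the center, so it descends to a $\bb{Z}$-linear isomorphism $\bar\phi\colon \Lambda_1 \to \Lambda_2$ between the projection lattices $\Lambda_i := \Gamma_i/(\Gamma_i\cap Z(H)) \subset \mf{v}$. For non-central $\gamma \in \Gamma_1$, Theorem \ref{thm:periods of central elt in 3D Heisenberg} gives the singleton $L(\gamma;E) = \{|V_\gamma|/\sqrt{1-B^2/E^2}\}$, so the marked length spectrum hypothesis yields $|\bar\phi(V_\gamma)| = |V_\gamma|$ for every $V_\gamma \in \Lambda_1$. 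Polarizing the identity $|\bar\phi(V+V')|^2 = |V+V'|^2$ shows that $\bar\phi$ preserves the inner product on $\Lambda_1$, so it extends to a linear isometry $R \in O(\mf{v})$ with $R(\Lambda_1) = \Lambda_2$.

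Next I extract $\bar{z}_1 = \bar{z}_2$, where $\xi_i := \exp(\bar{z}_i Z)$ with $\bar{z}_i > 0$ generates $\Gamma_i \cap Z(H)$. Crucially, I avoid invoking the complicated central length spectrum and use commutators instead. Write $\phi(\xi_1) = \xi_2^\epsilon$ with $\epsilon = \pm 1$, and pick $\tilde\gamma_1, \tilde\gamma_2 \in \Gamma_1$ whose projections $V_{\tilde\gamma_1}, V_{\tilde\gamma_2}$ span $\mf{v}$. Since $\mf{h}_1$ is two-step nilpotent, $[\tilde\gamma_1,\tilde\gamma_2] = \exp[V_{\tilde\gamma_1},V_{\tilde\gamma_2}]$, which is a specific integer power of $\xi_1$ determined by the coefficient of $Z$ in $[V_{\tilde\gamma_1},V_{\tilde\gamma_2}]$ divided by $\bar{z}_1$. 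Applying $\phi$ and comparing with $[\phi(\tilde\gamma_1),\phi(\tilde\gamma_2)] = \exp\bigl((\det R)\,[V_{\tilde\gamma_1},V_{\tilde\gamma_2}]\bigr)$ gives a scalar identity; taking absolute values and using that $R \in O(\mf{v})$ preserves $|[V,V']|$ forces $\bar{z}_1 = \bar{z}_2$ (with $\epsilon = \det R$ as a bonus, confirming $\Psi_R$ acts correctly on the center).

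Finally I correct the twist via conjugation. The subgroup $\tilde\Gamma_1 := \Psi_R(\Gamma_1)$ satisfies $\tilde\Gamma_1 \cap Z(H) = \Gamma_2\cap Z(H)$ and projects onto $\Lambda_2$ in $\mf{v}$. Pick a $\bb{Z}$-basis $V_1,V_2$ of $\Lambda_2$ with lifts $\alpha_i = \exp(V_i + s_i^{(1)}Z) \in \tilde\Gamma_1$ and $\beta_i = \exp(V_i + s_i^{(2)}Z) \in \Gamma_2$. A direct Campbell--Baker--Hausdorff computation (of the type carried out in Section \ref{sec:exact, left-inv mag forms}) yields $\exp(W)\alpha_i\exp(-W) = \exp\bigl(V_i + (s_i^{(1)} + c_i(W))Z\bigr)$, where $c_i(W)$ is the scalar defined by $[W, V_i] = c_i(W)\,Z$. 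The linear map $W \mapsto (c_1(W),c_2(W))$ from $\mf{v}$ to $\bb{R}^2$ is a bijection because $V_1,V_2$ are linearly independent, so I solve $c_i(W) = s_i^{(2)} - s_i^{(1)}$. Then conjugation by $\exp(W)$ sends each $\alpha_i$ to $\beta_i$ and fixes $\xi_1 = \xi_2$, giving $\exp(W)\tilde\Gamma_1\exp(-W) = \Gamma_2$, so $f := L_{\exp W}\circ \Psi_R$ is the desired isometry. The main conceptual point is bypassing the intricate central length sets via the commutator argument; the main technical care is reserved for the final lift-matching step.
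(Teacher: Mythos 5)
Your proof is correct, and it diverges from the paper's in one substantive place: how the equality $|\bar z_1| = |\bar z_2|$ of the central lattice generators is obtained. The paper proves this in Lemma \ref{lem:rigidity of central lattice} by analyzing the \emph{central} marked length spectrum --- taking the supremum over $h$ of the normalized maximal lengths in the classes $[\exp(h\bar z_i Z)]$, which requires the upper bound of Lemma \ref{lem:upper bound for supercritical lengths} together with a rational/irrational case analysis of $\bar z_i/(2\pi A)$ to see that the bound is attained in the limit. You bypass the central length sets entirely: having first used the noncentral spectrum to show that the induced map $\bar\phi$ on $\Lambda_1$ is norm-preserving (hence, by polarization and $\bb{Z}$-linearity, extends to $R\in O(\mf{v})$), you exploit the identity $[\tilde\gamma_1,\tilde\gamma_2]=\exp[V_{\tilde\gamma_1},V_{\tilde\gamma_2}]$ and the fact that $O(2)$ scales the area form by $\det R=\pm1$ to force $(\det R)\,m\bar z_1=\epsilon\, m\bar z_2$ with $m\neq 0$, whence $\bar z_1=\bar z_2$. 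This is a genuine simplification and in fact shows slightly more than the paper does: the \emph{noncentral} marked magnetic length spectrum at a single energy $E>|B|$ already determines the metric, with no input from the central free homotopy classes. (The paper's Lemma \ref{lem:rigidity of central lattice} retains independent interest because it shows the central lengths also encode $|\bar z_i|$, despite the failure of the maximal-length heuristic noted in Remark \ref{rmk:maximal mag length spec not well behaved}.) Your endgame --- realizing the marking as an isometric automorphism $\Psi_R$ composed with conjugation by $\exp(W)$, with $W$ found by inverting the nondegenerate pairing $W\mapsto([W,V_1],[W,V_2])$ --- is the same inner-automorphism correction the paper performs via the decomposition $\phi_*=R_1+R_2+S$ and the map $T=I+S^{-1}\circ R_2$, just written in terms of generators rather than by first extending $\phi$ to an automorphism of $H$. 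All the individual steps (the CBH conjugation formula, the bijectivity of $W\mapsto(c_1(W),c_2(W))$, and the descent of $L_{\exp W}\circ\Psi_R$ to the quotients) check out.
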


The proof of Theorem \ref{thm:MLS rigidity} is similar to the proof of Theorem 5.20 in \cite{eberlein1994geometry}, with one notable exception. The latter uses the maximal marked length spectrum, i.e. only the length longest closed geodesic in each free homotopy class. For Riemannian geodesics in central free homotopy classes (on two-step nilpotent Lie groups), this is always length of the one-paramter subgroup. Example \ref{ex:maximal length need not be straight line} and Remark \ref{rmk:maximal mag length spec not well behaved} show that the maximal magnetic marked length spectrum is not so well behaved. To circumvent this, we consider all the lengths of closed magnetic geodesics in central free homotopy classes. This argument is given in the following Lemma.

\begin{lem} \label{lem:rigidity of central lattice}
Under the same hypotheses as Theorem \ref{thm:MLS rigidity}, let $\exp(\bar{z}_1 Z)$ and $\exp(\bar{z}_2 Z)$ be generators for the central lattices $\Gamma_1 \cap H$ and $\Gamma_2 \cap H$, respectively. Then $|\bar{z}_1| = |\bar{z}_2|$.
\end{lem}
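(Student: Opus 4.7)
The plan is to exploit the asymptotic behavior of $L(\gamma_1^k; E)$ as $k \to \infty$, which will pin down $|\bar{z}_1|$ from the length data alone.

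First, I will establish that $\phi$ sends the central generator of $\Gamma_1$ to a generator of $\Gamma_2 \cap Z(H)$. Because $\phi$ is a group isomorphism, it maps $Z(\Gamma_1)$ bijectively onto $Z(\Gamma_2)$, and the excerpt already invokes Eberlein's Proposition 5.4 to identify $Z(\Gamma_i)$ with $\Gamma_i \cap Z(H)$, a rank-one lattice in the one-dimensional center. Hence $\phi(\exp(\bar{z}_1 Z)) = \exp(\epsilon \bar{z}_2 Z)$ for some $\epsilon \in \{\pm 1\}$, and for each $k \in \bb{Z}^+$ the conjugacy class of $\gamma_1^k = \exp(k\bar{z}_1 Z)$ is sent by $\phi_*$ to the conjugacy class of $\exp(\epsilon k \bar{z}_2 Z)$. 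By the marked magnetic length spectrum hypothesis, $L(\exp(k\bar{z}_1 Z); E) = L(\exp(\epsilon k \bar{z}_2 Z); E)$ for every $k$.

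Next, I will compute the growth rate of the maximal length $M_k := \max L(\exp(k\bar{z}_1 Z); E)$. By Theorem \ref{thm:periods of central elt in 3D Heisenberg} (the $E > |B|$ branch of \eqref{eq:lengths in central free homotopy classes}), $L(\exp(k\bar{z}_1 Z); E)$ is a finite set consisting of the straight-line length $k|\bar{z}_1|$ together with the values
\[
\frac{2\sqrt{\pi A \ell (k|\bar{z}_1| - \pi A \ell)}}{\sqrt{1 - B^2/E^2}}, \qquad \ell \in \bb{Z}^+,\ \ell < \frac{k|\bar{z}_1|(E + |B|)}{2 \pi A E}.
\]
Lemma \ref{lem:upper bound for supercritical lengths} gives $M_k \le k|\bar{z}_1|/\sqrt{1 - B^2/E^2}$. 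For the matching lower bound, I will choose $\ell_k \in \bb{Z}$ within distance $1$ of the parabola maximum $\ell^* = k|\bar{z}_1|/(2\pi A)$; a direct check shows $\ell_k$ lies in the admissible range for all sufficiently large $k$, and the identity $4\pi A \ell(k|\bar{z}_1| - \pi A \ell) = (k|\bar{z}_1|)^2 - 4\pi^2 A^2 (\ell - \ell^*)^2$ then yields
\[
M_k \ge \frac{\sqrt{(k|\bar{z}_1|)^2 - 4\pi^2 A^2}}{\sqrt{1 - B^2/E^2}}.
\]
Squeezing these two bounds produces $M_k / k \to |\bar{z}_1|/\sqrt{1 - B^2/E^2}$ as $k \to \infty$.

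Running the same asymptotic analysis on the right-hand side gives $M_k / k \to |\bar{z}_2|/\sqrt{1 - B^2/E^2}$, and the equality of spectra forces the two limits to agree; since $\sqrt{1 - B^2/E^2} > 0$, this yields $|\bar{z}_1| = |\bar{z}_2|$. The main technical point will be establishing the lower bound on $M_k$: one needs to verify that the nearest-integer approximant $\ell_k$ to $\ell^*$ does satisfy the strict admissibility inequality $\ell_k < k|\bar{z}_1|(E + |B|)/(2\pi A E)$ eventually (which uses $|B| \ge 0$ together with the slack $(E+|B|)/E \ge 1$, with the $B = 0$ case handled by taking $\ell_k = \lfloor \ell^* \rfloor$). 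Everything else is bookkeeping once the passage to $k$-th powers of the central generator replaces the ill-behaved "maximal length" invariant by an asymptotic one.
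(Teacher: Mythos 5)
Your proposal is correct and follows essentially the same strategy as the paper: pass to powers of the central generator, use the marking to match the length sets, normalize the maximal length in the class of $\exp(k\bar{z}_i Z)$ by $k$, bound it above via Lemma \ref{lem:upper bound for supercritical lengths}, and show the bound is asymptotically attained so that the invariant $|\bar{z}_i|/\sqrt{1-B^2/E^2}$ is recovered from the spectrum. The only (minor, and arguably cleaner) difference is in the attainment step: the paper takes a supremum over $h$ and splits into cases according to whether $\bar{z}_i/(2\pi A)$ is rational, invoking density of $\{h\bar{z}_i/(2\pi A)\}$ near integers, whereas you take the nearest admissible integer to the vertex $\ell^\ast = k|\bar{z}_1|/(2\pi A)$ of the parabola and use the exact identity $4\pi A\ell(k|\bar{z}_1|-\pi A\ell)=(k|\bar{z}_1|)^2-4\pi^2A^2(\ell-\ell^\ast)^2$ to get an $O(1)$ loss and hence a genuine limit.
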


\begin{proof}
First, we claim that
\begin{align} \label{eq:normalized lengths of closed mag geods in central free homotopy classes}
    \sup_{h \in \bb{Z}} \left\{ \frac{\max(L([\exp(h\bar{z}_1 Z)]_1; E))}{|h|} \right\} = \sup_{h \in \bb{Z}} \left\{ \frac{\max(L([\exp(h\bar{z}_2 Z)]_2; E))}{|h|} \right\}
\end{align}
where $[\gamma]_i$ denotes the free homotopy class of closed curves on $\Gamma_i \backslash H$ determined by $\gamma \in \Gamma_i$. Let $\phi: \Gamma_1 \to \Gamma_2$ be an isomorphism. Since $\phi$ is an isomorphism of $Z(\Gamma_1)$ onto $Z(\Gamma_2)$, $\phi(\exp(h\bar{z}_1 Z)) = \exp(\pm h \bar{z}_2 Z)$, and so $\phi_*[\exp(h\bar{z}_1 Z)]_1 = [\exp(\pm h \bar{z}_2 Z)]_2$. By hypothesis, the sets of lengths of closed magnetic geodesics in these two classes are equal. Moreover, the positive integer $|h|$ is the same for both free homotopy classes. Hence the sets over which the supremums are taken are equal.

Next we evaluate the supremums in \eqref{eq:normalized lengths of closed mag geods in central free homotopy classes}. By Lemma \ref{lem:upper bound for supercritical lengths}, the set of lengths of smoothly closed magnetic geodesics in the free homotopy class determined by an element of the form $\exp(h\bar{z}_i Z)$ is bounded above by $|h\bar{z}_i|/\sqrt{1 - B^2 / E^2}$. After dividing all the lengths in each set by $|h|$, respectively, we obtain a uniform upper bound,
\begin{align} \label{eq:uniform upper bound for normalized lengths}
    \sup_{h \in \bb{Z}} \left\{ \frac{\sqrt{4 \pi A \ell (h\bar{z}_i - \pi A \ell)}}{|h| \sqrt{1 - \frac{B^2}{E^2}}} : \ell \in \bb{Z}, \ \frac{2E}{E + |B|} < \frac{h\bar{z}_i}{\pi A \ell} \right\} \leq \frac{|\bar{z}_i|}{\sqrt{1 - \frac{B^2}{E^2}}}.
\end{align}
We now claim that the inequality in \eqref{eq:uniform upper bound for normalized lengths} is actually an equality. If the quantity $\bar{z}_i/(2\pi A) \in \bb{Q}$, then for $h$ large enough $\ell = (h\bar{z}_i)/(2\pi A)$ will be an allowable integer value for $\ell$, and $\max_{\ell}(\sqrt{4 \pi A \ell (h\bar{z}_i - \pi A \ell)}) = |h\bar{z}_i|$. If the quantity $\bar{z}_i/(2\pi A) \notin \bb{Q}$, then the numbers $(h\bar{z}_i)/(2\pi A)$ will will come arbitrarily close to an integer. In either case, the supremum is $|\bar{z}_i|/\sqrt{1 - B^2/E^2}$. The lemma now follows.
\end{proof}

We now proceed with the proof of Theorem \ref{thm:MLS rigidity}.

\begin{proof}
First, extend the marking $\phi : \Gamma_1 \to \Gamma_2$ to an automorphism $\phi: H \to H$. Because $\phi_*$ is a Lie algebra automorphism of $\mf{h} = \mf{v} \oplus \mf{z}$, we can decompose it as $\phi_* = R_1 + R_2 + S$, where $R_1 : \mf{v} \to \mf{v}$, $R_2 : \mf{v} \to \mf{z}$, and $S : \mf{z} \to \mf{z}$ are linear maps. Using Lemma \ref{lem:rigidity of central lattice},
\begin{align*}
    |S(\bar{z}_1Z)| = |\pm \bar{z}_2 Z| = |\bar{z}_2| = |\bar{z}_1| = |\bar{z}_1 Z|
\end{align*}
shows that $S$ is an isometry of $\mf{z}$. Let $\pi_\mf{v}:\mf{h} \to \mf{v}$ denote the projection. For any $V \in \pi_\mf{v} \log \Gamma_1$, there is some $\xi \in \Gamma_1$ such that $\xi = V + Z$ and $Z \in \mf{z}$. By hypothesis, $\exp(\xi)$ and
\begin{align*}
    \phi(\exp(V+Z)) = \phi(\exp(\xi)) = \exp(\phi_* \xi) = \exp( R_1(V) + R_2(V) + S(Z))
\end{align*}
have the same lengths of closed magnetic geodesics. By \eqref{eq:lengths in noncentral free homotopy classes}, we have
\begin{align*}
    \frac{|V|}{\sqrt{1 - \frac{B^2}{E^2}}} = \frac{|R_1(V)|}{\sqrt{1 - \frac{B^2}{E^2}}}
\end{align*}
and we conclude that $R_1$ is an isometry of $\mf{v}$. It is straightforward to check that $R_1 + S : \mf{h} \to \mf{h}$ is an isometric Lie algebra isomorphism. Let $\phi_1 : H \to H$ be the Lie group isomorphism such that $(\phi_1)_* = R_1 + S$. Define $T : \mf{h} \to \mf{h}$ by $T(V + Z) = V + Z + (S^{-1} \circ R_2)(V)$. Once can verify directly that $T$ is an inner automorphism of $\mf{h}$ and $(\phi_1)_* \circ T = \phi_*$. Let $\phi_2$ be the inner automorphism of $H$ such that $(\phi_2)_* = T$.

Now we have that $\phi = \phi_1 \circ \phi_2$, where $\phi_1$ is an isometric automorphism of $H$ and $\phi_2$ is an in inner automorphism of $H$. Because $\phi_2$ is an inner automorphism, $\Gamma_1 \backslash H$ is isometric to $\phi_2(\Gamma_1) \backslash H$ (via a left-translation), and $\phi_2(\Gamma_1) \backslash H$ is isometric to $\phi_1(\phi_2(\Gamma_1)) \backslash H = \phi(\Gamma_1) \backslash H = \Gamma_2 \backslash H$.
\end{proof}

\begin{rmk}
For $E < |B|$, the set of lengths in any noncentral free homotopy class is empty. Hence magnetic length spectrum does not determine 
\end{rmk}

\begin{rmk}
The isometry $\Gamma_1 \backslash H \to \Gamma_2 \backslash H$ preserves the magnetic form $\Omega = d(B \zeta)$ up to sign. Since the isometry is realized by $\phi_1 \circ L_x$ for some $x \in H$,
\begin{align*}
    (\phi_1 \circ L_x)^* \zeta = L_x^* (\phi_1^* \zeta) = L_x^*(\pm \zeta) = \pm \zeta.
\end{align*}
\end{rmk}

\section{Heisenberg Type Manifolds} \label{sec:HT manifolds}

Heisenberg type manifolds are Riemannian manifolds that generalize the Heisenberg group endowed with a left-invariant metric. A metric two-step nilpotent Lie algebra $\mf{h}$ is of \emph{Heisenberg type} if 
\begin{align*}
    j(Z)^2 = -|Z|^2 I_{\mf{v}}
\end{align*}
for every $Z \in \mf{z}$ (see \eqref{eq:j-map def}). A simply connected two-step nilpotent Lie group with left-invariant metric is of Heisenberg type if its metric Lie algebra is of Heisenberg type. It is often the case that theorems concerning the Heisenberg group endowed with a left-invariant metric, or their analogous formulations, are also true for Heisenberg type manifolds. For an example, see the results of \cite{gornetmast2000lengthspectrum}. This paradigm does not appear to extend to the setting of Heisenberg type manifolds endowed with a left-invariant magnetic field. In this section, we show by way of a simple example that the computation of the lengths of closed magnetic geodesics becomes significantly more complex for Heisenberg type manifolds.

Let $\mf{h} = \myspan \{ X_1, \ldots, X_4, Z_1, Z_2 \}$ and define a bracket structure by
\begin{align*}
    [X_1, X_2] = Z_1 \qquad [X_1, X_3] = Z_2 \qquad [X_2, X_4] = -Z_2 \qquad [X_3, X_4] = Z_1
\end{align*}
and extending by bilinearity and skew-symmetry to all of $\mf{h}$. Define the metric on $\mf{h}$ by declaring $\{ X_1, X_2,X_3, \allowbreak X_4, Z_1, Z_2 \}$ to be an orthonormal basis. It is straightforward to check that $\mf{h}$ is of Heisenberg type with two dimensional center $\mf{z} = \myspan\{ Z_1, Z_2 \}$. Let $\{ \alpha_1, \alpha_2,\alpha_3, \alpha_4, \zeta_1, \allowbreak \zeta_2 \}$ be the basis of $\mf{h}^*$ dual to $\{ X_1, X_2,X_3, X_4, Z_1, Z_2 \}$. Let $H$ be the simply connected Lie group with left-invariant metric with metric Lie algebra $\mf{h}$. As in Lemma \ref{lem:every exact left-invariant 2-form is d of something central}, any exact, left-invariant 2-form is of the form $\Omega = d(\zeta_m)$ for some $\zeta_m \in \mf{z}^*$. For simplicity we take as magnetic field $\Omega = d(B\zeta_1)$. For each $\gamma \in H$, we wish to understand the $\gamma$-periodic geodesics and the associated periods. 

Proceeding as in the Heisenberg case in Section \ref{sec:compact quotients}, let $p(t) = \sum a_i(t) \alpha_i + \sum c_i(t) \zeta_i$ be the integral curve of the Euler vector field on $\mf{h}^*$ with initial condition $p(0) = \sum u_i \alpha_i + \sum z_i \zeta_i$. Then the component functions are
\begin{align*}
a_1(t) &= u_1 \cos(\hat{z}t) + \left( \frac{-z_1 u_2 - z_2 u_3}{\hat{z}} \right) \sin(\hat{z}t) \\
a_2(t) &= u_2 \cos(\hat{z}t) + \left( \frac{z_1 u_1 + z_2 u_4}{\hat{z}} \right) \sin(\hat{z}t) \\
a_3(t) &= u_3 \cos(\hat{z}t) + \left( \frac{z_2 u_1 - z_1 u_4}{\hat{z}} \right) \sin(\hat{z}t) \\
a_4(t) &= u_4 \cos(\hat{z}t) + \left( \frac{-z_2 u_2 + z_1 u_3}{\hat{z}} \right) \sin(\hat{z}t) \\
c_1(t) &= z_1 \\
c_1(t) &= z_2
\end{align*}
where $\hat{z} = \sqrt{z_1^2 + z_2^2}$. Next, let $\sigma(t)$ be the magnetic geodesic through the identity determined by the integral curve $p(t)$. Hence $\sigma(t)$ solves $\sigma'(t) = dh_{p(t)}$, where $h : \mf{h}^* \to \bb{R}$ is the Hamiltonian (see \eqref{eq:reduced magnetic Hamiltonian}). Writing $\sigma(t) = \Exp({\bf X}(t) + {\bf Z}(t))$, where $\bfX(t) = \sum x_i(t) X_i$ and $\bfZ(t) = \sum z_i(t) Z_i$, we have on the one hand
under trivialization by left-multiplication,
\begin{align*}
\sigma'(t) = {\bf X}'(t) + {\bf Z}'(t) + \frac{1}{2}[{\bf X}'(t), {\bf X}(t)] = \sum x_i'(t) X_i + {\bf Z}'(t) + \frac{1}{2}[{\bf X}'(t), {\bf X}(t)],
\end{align*}
and on the other hand
\begin{align*}
dh_{p(t)} &= \sharp(p(t) + B\zeta_1) = \sum a_i(t)X_i + (z_1 + B)Z_1 + z_2 Z_2.
\end{align*}
Matching up the non-central components shows that
\begin{align*}
x_1(t) &= \frac{u_1}{\hat{z}} \sin(\hat{z}t)  + \frac{- z_1 u_2 - z_2 u_3}{\hat{z}^2}\left( 1 - \cos(\hat{z}t) \right) \\
x_2(t) &= \frac{u_2}{\hat{z}} \sin(\hat{z}t) + \frac{z_1 u_1 + z_2 u_4}{\hat{z}^2} \left( 1 - \cos(\hat{z}t) \right)\\
x_3(t) &= \frac{u_3}{\hat{z}} \sin(\hat{z}t) + \frac{z_2 u_1 - z_1 u_4}{\hat{z}^2}\left( 1 - \cos(\hat{z}t) \right) \\
x_4(t) &= \frac{u_4}{\hat{z}} \sin(\hat{z}t)  + \frac{- z_2 u_2 + z_1 u_3}{\hat{z}^2}\left( 1 - \cos(\hat{z}t) \right)
\end{align*}
while the central components satisfies
\begin{align*}
{\bf Z}'(t) = (z_1 + B)Z_1 + z_2 Z_2 - \frac{1}{2}[{\bf X}'(t), {\bf X}(t)].
\end{align*}
A tedious computation shows
\begin{align*}
\left[{\bf X}'(t), {\bf X}(t)\right] &= \left( - \frac{z_1 \hat{u}^2}{\hat{z}^2} + \frac{z_1 \hat{u}^2}{\hat{z}^2} \cos(\hat{z}t) \right)Z_1 + \left( - \frac{z_2 \hat{u}^2}{\hat{z}^2} + \frac{z_2 \hat{u}^2}{\hat{z}^2} \cos(\hat{z}t) \right)Z_2 \\
&= - \frac{z_1 \hat{u}^2}{\hat{z}^2} \left( 1 - \cos(\hat{z}t) \right)Z_1  - \frac{z_2 \hat{u}^2}{\hat{z}^2} \left( 1 - \cos(\hat{z}t) \right)Z_2 \\
&= \left( 1 - \cos(\hat{z}t) \right) \left( - \frac{z_1 \hat{u}^2}{\hat{z}^2} Z_1  - \frac{z_2 \hat{u}^2}{\hat{z}^2} Z_2 \right)
\end{align*}
where $\hat{u} = \sqrt{u_1^2 + \cdots + u_4^2}$. A final integration now provides the central components:
\begin{align*}
z_1(t) &= \left( z_1 + B + \frac{z_1 \hat{u}^2}{2\hat{z}^2} \right)t - \frac{z_1 \hat{u}^2}{2\hat{z}^3} \sin(\hat{z}t) \\
z_2(t) &= \left( z_2 + \frac{z_2 \hat{u}^2}{2\hat{z}^2} \right) t - \frac{z_2 \hat{u}^2}{2\hat{z}^3} \sin(\hat{z}t).
\end{align*}

Let $\gamma = \exp(\xi_1 Z_1 + \xi_2 Z_2)$ be a central element of $H$. Comparing the components of $\gamma \sigma(t) = \sigma(t + \omega)$ shows that $\omega = 2 \pi k / \hat{z}$, $k \in \bb{Z}$. With this choice of $\omega$, the non-central components are equal, while the central component yield the system
\begin{align}
    & \left( z_1 + B + \frac{z_1 \hat{u}^2}{2\hat{z}^2} \right) \frac{2\pi k}{\hat{z}} = \xi_1 \label{eq:HT central periods 1}\\
    & \left( z_2 + \frac{z_2 \hat{u}^2 }{2 \hat{z}^2} \right) \frac{2\pi k}{\hat{z}} = \xi_2. \label{eq:HT central periods 2}
\end{align}
Each choice of $u_1, \ldots, u_4, z_1, z_2$ satisfying this system and the energy constraint
\begin{align*}
    1 = \hat{u}^2 + (z_1 + B)^2 + z_2^2 = \hat{u}^2 + \hat{z}^2 + 2z_1 B + B^2
\end{align*}
will yield a unit speed magnetic geodesic translated by $\gamma$. In the case that the magnetic field and $\gamma$ are ``parallel'', i.e. $\xi_2 = 0$, then \eqref{eq:HT central periods 2} becomes
\begin{align*}
    z_2 \left(1 + \frac{\hat{u}^2 }{2 \hat{z}^2} \right) \frac{2\pi k}{\hat{z}} = 0.
\end{align*}
The second and third factor are necessarily nonzero, so $z_2 = 0$. This reduces \eqref{eq:HT central periods 1} to an equation that can be solved in the same way as the Heisenberg case, according to the strength of the magnetic field relative to the energy. When $\gamma$ is an arbitrary element of the center, it it is much more difficult to completely solve \eqref{eq:HT central periods 1} and \eqref{eq:HT central periods 2}, and hence obtain an explicit description of all the $\gamma$-periodic geodesics. 

\section{Appendix: Tangent Bundle Viewpoint: Periodic Magnetic Geodesics in Heisenberg manifolds}

In \cite{kaplan1981Cliffordmodules}, A. Kaplan introduced so-called $j$-maps to study Clifford modules (see Section \ref{sec:geoemtry of two-step nilpotent lie groups} for the definition). A metric two-step nilpotent Lie algebra is completely characterized by its associated $j$-maps. Since being introduced, they have proven very useful in the study of two-step nilpotent geometry. In this appendix we show how the magnetic geodesic equations can be characterized in terms of the $j$-maps.

Let $G$ be a two-step nilpotent Lie group endowed with a left-invariant metric $g$ and an exact, left-invariant magnetic form $\Omega$. Let $\mf{g} = \mf{v} \oplus \mf{z}$ be the decomposition of the Lie algebra into the center and its orthogonal complement. By Lemma \ref{lem:every exact left-invariant 2-form is d of something central}, there is $\zeta_m \in \mf{z}^*$ such that $\Omega = d(B\zeta_m)$.

\begin{lem} \label{lem:Lorenta force is j of something}
The Lorentz force associated to the magnetic field $\Omega$ satisfies $F_\mf{v} = j(-BZ_m)$ and $F_\mf{z} = 0$, where $Z_m = \sharp(\zeta_m)$.
\end{lem}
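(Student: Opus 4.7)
The plan is to reduce everything to a single computation in the Lie algebra $\mf{g} = T_e G$. Because both $\Omega$ and $g$ are left-invariant, the Lorentz force $F$ defined by $\Omega_x(u,v) = g_x(F_x u, v)$ is left-invariant as well, so it suffices to identify $F$ on elements of $\mf{g}$.

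First I would evaluate $\Omega = d(B\zeta_m)$ on left-invariant vector fields via the standard identity $d\theta(X,Y) = X\theta(Y) - Y\theta(X) - \theta([X,Y])$. Since $\zeta_m$ is left-invariant, its pairing with any left-invariant vector field is constant on $G$, so the first two terms vanish and we are left with $\Omega(X,Y) = -B\zeta_m([X,Y])$.

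Next I would exploit the two-step structure: for $X = V_1 + Z_1$ and $Y = V_2 + Z_2$ with $V_i \in \mf{v}$ and $Z_i \in \mf{z}$, the bracket $[X,Y] = [V_1,V_2]$ lies in $\mf{z}$. Since $Z_m = \sharp(\zeta_m)$, the properties of $\sharp$ give $\zeta_m(W) = g(W, Z_m)$ for every $W \in \mf{g}$, so
\begin{align*}
\Omega(X,Y) = -B\,g([V_1,V_2], Z_m) = -B\,g(j(Z_m)V_1, V_2) = g(j(-BZ_m)V_1, V_2),
\end{align*}
where the middle equality uses the defining property \eqref{eq:j-map def} of the $j$-map.

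Finally, I would compare this with $\Omega(X,Y) = g(FX, Y)$ for arbitrary $Y \in \mf{g}$. Choosing $X = Z_1 \in \mf{z}$ makes the left-hand side vanish for every $Y$, forcing $FZ_1 = 0$, i.e.\ $F_\mf{z} = 0$. Choosing $X = V_1 \in \mf{v}$ yields $g(FV_1, V_2 + Z_2) = g(j(-BZ_m)V_1, V_2)$ for every $V_2, Z_2$; this forces $FV_1 = j(-BZ_m)V_1 \in \mf{v}$, giving $F_\mf{v} = j(-BZ_m)$. There is no real obstacle: the only thing to be careful about is tracking the sign and the identification $\zeta_m = \flat(Z_m)$.
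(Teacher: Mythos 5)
Your proof is correct and follows essentially the same route as the paper: evaluate $\Omega=d(B\zeta_m)$ on left-invariant fields to get $-B\zeta_m([\,\cdot\,,\,\cdot\,])$, rewrite via $\sharp$ and the defining identity of the $j$-map, and read off $F$ from nondegeneracy of $g$. The only difference is that you spell out the Cartan-formula justification for $d\theta(X,Y)=-\theta([X,Y])$, which the paper leaves implicit.
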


\begin{proof}
Let $X \in \mf{g}$, $V \in \mf{v}$ and $Z \in \mf{z}$. Then
\begin{align*}
    g(F(Z), X) &= \Omega(Z, X) = d(B \zeta_m)(Z,X) = -B\zeta_m([Z,X]) = 0,
\end{align*}
and
\begin{align*}
    g(F(V), X) &= \Omega(V, X) = d(B \zeta_m)(V,X) = -B\zeta_m([V,X]) \\
    &= -Bg(\sharp(\zeta_m), [V,X]) = -Bg(j(Z_m)V, X) \\
    &= g(j(-B Z_m)V, X).
\end{align*}
\end{proof}
Because of Lemma \ref{lem:Lorenta force is j of something}, we will write $F = j(-BZ_m)$ with the understanding that $F$ vanishes on central vectors and agrees with $j(-BZ_m)$ on vectors in $\mf{v}$. Let $\gamma(t) = \exp(X(t) + Z(t))$ be a magnetic geodesic on $G$ where $X(t) \in \mf{v}$ and $Z(t) \in \mf{z}$. By \eqref{eq:tangent vector field of path in two-step nilpotent group}, we can express the velocity vector of $\gamma$ as $\gamma'(t) = X'(t) + \frac{1}{2}[X'(t), X(t)] + Z'(t)$. The condition for $\gamma$ to be a magnetic geodesic is $\nabla_{\gamma'(t)} \gamma'(t) = F(\gamma'(t))$. Using \eqref{eq:Levi-Civita eqns} to expand this condition and imposing the initial conditions $\gamma(0) = e$ and $\gamma'(0) = X_0 + Z_0$, the geodesic equations on $\mf{v}$ and $\mf{z}$ separately are
\begin{align}
    & X''(t) = j(Z_0 - BZ_m)X'(t) \\
    & Z'(t) + \frac{1}{2}[X'(t),X(t)] = Z_0.
\end{align}

We restrict to the three-dimensional Heisenberg case and consider the magnetic geodesics in this context. Following the approach as illustrated in Prop. 3.5 on pages 625--628 of \cite{eberlein1994geometry}, and reducing to the three-dimensional Heisenberg case, a straightforward calculation gives   the following result.

\begin{cor}
\label{GeodEqsThm}If $z_{0}-B=0$, (or if $z_{0}-B\neq 0$ and $x_{0}=y_{0}=0$%
) then $\sigma \left( t\right) $ is the one parameter subgroup \ 
\begin{equation}
\sigma \left( t\right) =\exp \left( x\left( t\right) X+y\left( t\right)
Y+z\left( t\right) Z\right) =\exp \left( t\left( x_{0}X+y_{0}Y+z_{0}Z\right)
\right) \text{.}  \label{StraightGeodEqs}
\end{equation}%
If $z_{0}-B\neq 0$, the solution is 
\begin{equation}
\left( 
\begin{array}{c}
x\left( t\right) \\ 
y\left( t\right)%
\end{array}%
\right) =\frac{A}{z_{0}-B}\left( 
\begin{array}{cc}
\sin \left( t\left( \frac{z_{0}-B}{A}\right) \right) & -\left( 1-\cos \left(
t\left( \frac{z_{0}-B}{A}\right) \right) \right) \\ 
1-\cos \left( t\left( \frac{z_{0}-B}{A}\right) \right) & \sin \left( t\left( 
\frac{z_{0}-B}{A}\right) \right)%
\end{array}%
\right) \left( 
\begin{array}{c}
x_{0} \\ 
y_{0}%
\end{array}%
\right) \text{,}  \label{SpiralingGeodEqsNoncentral}
\end{equation}%
and%
\begin{equation}
\begin{array}{c}
 z\left( t\right) 
=\left( z_{0}+\frac{A\left(x_0^2+y_0^2\right)}{2\left( z_{0}-B\right) }\right) t-\frac{A^2%
\left(x_0^2+y_0^2\right)}{2\left( z_{0}-B\right) ^{2}}\sin \left( t\left( \frac{z_{0}-B}{A%
}\right) \right).  \label{SpiralingGeodEqsCentral}
\end{array}
\end{equation}
\end{cor}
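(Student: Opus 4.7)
The plan is to use the $j$-map formulation given in the preceding paragraphs of the appendix, reducing the computation to the solution of a decoupled pair of linear ODEs on $\mf{v}$ and $\mf{z}$. The first task is to compute $j(Z)$ explicitly in the three-dimensional Heisenberg case. Starting from the orthonormal basis $\{X/\sqrt{A},\, Y/\sqrt{A},\, Z\}$ and the defining relation $g(j(Z)V_1,V_2)=g([V_1,V_2],Z)$, together with $[X,Y]=Z$, a two-line computation shows that $j(Z)X = \tfrac{1}{A}Y$ and $j(Z)Y = -\tfrac{1}{A}X$. Because $Z_m = Z$ and $Z_0 = z_0 Z$, the operator appearing in the $\mf{v}$-equation becomes $j(Z_0 - BZ_m) = \tfrac{z_0-B}{A}J$, where $J$ denotes the standard rotation by $\pi/2$ in the basis $\{X,Y\}$.

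Next I dispose of the degenerate cases. If $z_0 - B = 0$ the equation $X''(t) = j(Z_0 - BZ_m)X'(t)$ reduces to $X''(t)=0$, so $X'(t)\equiv X_0$ and $X(t) = tX_0$. Then $[X'(t),X(t)] = [X_0,tX_0]=0$ collapses the $\mf{z}$-equation to $Z'(t)=Z_0$, giving $Z(t) = tZ_0$ and therefore $\sigma(t) = \exp(t(x_0X+y_0Y+z_0Z))$. If instead $z_0-B\neq 0$ but $x_0=y_0=0$, then $X(t)\equiv 0$ is already a solution satisfying the initial conditions, and the $\mf{z}$-equation again gives $Z(t)=tZ_0$, so the same conclusion holds.

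In the remaining case $z_0-B\neq 0$, I treat the $\mf{v}$-equation as a first-order linear system for the velocity $W(t) = X'(t)$: $W'(t) = \tfrac{z_0-B}{A}JW(t)$. Its solution is $W(t) = \exp\!\bigl(\tfrac{t(z_0-B)}{A}J\bigr)W(0)$, and since $\exp(sJ)$ is the $s$-rotation matrix, one integration against $X(0)=0$ produces precisely the matrix formula (\ref{SpiralingGeodEqsNoncentral}). For the central component, I use $[X'(t),X(t)] = (x'(t)y(t)-x(t)y'(t))Z$; the cleanest way to evaluate this bracket is to introduce the complex variables $\tilde w = x'+iy'$ and $\tilde v = x+iy$, so that $\tilde w(t) = (x_0+iy_0)e^{i\omega t}$ with $\omega = (z_0-B)/A$, and $\tilde v(t) = (\tilde w(t)-\tilde w(0))/(i\omega)$. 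The identity $x'y-xy' = \operatorname{Im}(\overline{\tilde w}\,\tilde v)$ then yields $[X'(t),X(t)] = -\tfrac{A(x_0^2+y_0^2)}{z_0-B}(1-\cos(\omega t))\,Z$. Substituting into $Z'(t) = Z_0 - \tfrac{1}{2}[X'(t),X(t)]$ and integrating with $Z(0)=0$ yields (\ref{SpiralingGeodEqsCentral}).

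The only genuinely computational step is the bracket evaluation and subsequent integration for $z(t)$; there is no conceptual obstacle, since once the $j$-map is identified as a scalar multiple of a single rotation operator, both ODEs become elementary. The proof is essentially a specialization of Eberlein's Prop.\ 3.5 with the additional book-keeping that the central coordinate of the initial \emph{velocity} is $z_0$, whereas the central coordinate of the Euler integral curve used in Section \ref{sec:magnetic geodesic equations on simply connected 2n+1 dim Heis} differs by the constant $B$; this accounts for the consistent appearance of $z_0-B$ in the formulas of the corollary in place of the $z_0$ appearing earlier.
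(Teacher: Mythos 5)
Your proposal is correct and follows the paper's intended route: the paper's own proof simply defers to Eberlein's Prop.\ 3.5 and calls the reduction to the three-dimensional Heisenberg case a straightforward calculation, and your computation of $j(Z)=\tfrac{1}{A}J$, the resulting rotation ODE for $X'(t)$, and the bracket $[X'(t),X(t)]=-\tfrac{A(x_0^2+y_0^2)}{z_0-B}\left(1-\cos\left(\tfrac{(z_0-B)t}{A}\right)\right)Z$ supplies exactly those details. The degenerate cases and the final formulas, including the shift $z_0\mapsto z_0-B$ relative to the cotangent-bundle parametrization of Section \ref{sec:magnetic geodesic equations on simply connected 2n+1 dim Heis}, all check out against \eqref{SpiralingGeodEqsNoncentral} and \eqref{SpiralingGeodEqsCentral}.
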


\begin{rmk}
The coordinate functions \eqref{SpiralingGeodEqsNoncentral} and \eqref{SpiralingGeodEqsCentral} are equivalent the one obtained in \eqref{eq:x_i component of mag geod}-\eqref{eq:z component of mag geod} in the following sense. In order to obtain the magnetic geodesic through the origin determined by $(u_0, v_0, z_0)$ as in section \ref{sec:magnetic geodesic equations on simply connected 2n+1 dim Heis} take as initial tangent vector in \eqref{SpiralingGeodEqsNoncentral} and \eqref{SpiralingGeodEqsCentral} to be $(x_0, y_0, z_0) = (u_0/A, v_0/A, z_0 + B)$.
\end{rmk}

We now present some of the main results about the three-dimensional Heisenberg manifold proved in the body of the paper, but expressed using the tangent bundle, rather than the cotangent bundle.

Continuing the notation from the previous sections, we fix energy $E,$ magnetic strength $B,$ and metric parameter $A.$   
Let  $\left( H, g_A, \Omega\right)$ denote a simply connected Heisenberg manifold.  The theorems in this section state precisely the set of periods $\omega$ such that
there exists an intial velocity $v_{p}\in TH$ such that $\sigma_{v_{p}}\left(t\right)$
is periodic with period $\omega$. We also precisely state the
set of initial velocities $v_{p}$, hence the set of geodesics, that
produce each period $\omega$.  

Let $\Gamma$  denote a cocompact discrete subgroup of $H$ and, as above, denote the resulting  compact Heisenberg manifold by $\left(\Gamma\backslash H, g_A, \Omega\right).$ For all $\gamma \in \Gamma,$ we 
state below precisely the set of periods $\omega$ such that
there exists an intial velocity $v_{p}\in TH$ such that $\sigma_{v_{p}}\left(t\right)$
is $\gamma$-periodic with period $\omega$. We also precisely state the
set of initial velocities $v_{p}$, hence the set of geodesics, that
produce each period $\omega$.

\subsection{Periodic Magnetic Geodesics on the Simply Connected Heisenberg Group}

We now consider the existence
of periodic geodesics in $\left(H,g_{A}, d\left(B\zeta\right)\right)$,
the three-dimensional Heisenberg Lie group $H$ with left-invariant
metric determined by the orthonormal basis $\left\{ \frac{1}{\sqrt{A}}X,\frac{1}{\sqrt{A}}Y,Z\right\} $
and magnetic form $\Omega=-B\,\alpha\wedge\beta$. \ Recall that for
a vector $v\in\mathfrak{h}$, $\sigma_{v}\left(t\right)$ denotes
the magnetic geodesic through the identity with initial velocity $v$.
Note that if $v_{p}\in T_{p}H$, then $\sigma_{v_{p}}\left(t\right)$
denotes the magnetic geodesic through $p=\sigma_{v_{p}}\left(0\right)$
with initial velocity $v_{p}$. Also note that because $g_{A}$ and
$\Omega$ are left-invariant, that $\sigma_{v_{p}}\left(t\right)=L_{p}\sigma_{v}\left(t\right)$,
where $v_{p}=L_{p\ast}\left(v\right)$; i.e., magnetic geodesics through
$p\in H$ are just left translations of magnetic geodesics through
the identity. Clearly, a magnetic geodesic through $p\in H$ is periodic
with period $\omega$ if and only if its left translation by $p^{-1}$
is a magnetic geodesic through the identity with period $\omega$.

\begin{thm}
\label{thm:PreciseThmSimplyConnected}With notation as above, fix energy
$E,$ magnetic strength $B,$ and metric paramter $A.$
\begin{enumerate}
\item If $B^2>E^2$ then there exists a one-parameter
family of vectors $v\in\mathfrak{h}$, $\left\vert v\right\vert =E,$
such that $\sigma_{v}\left(t\right)$ is periodic. In particular,
$\sigma_{v}\left(t\right)$ is periodic if and only if $z_{0}=B-\mathrm{sgn}\left(B\right)\sqrt{B^{2}-E^{2}}$and
\[
x_{0}^{2}+y_{0}^{2}=-2z_{0}\left(z_{0}-B\right)/A.
\]
The set of periods of $\sigma_{v}\left(t\right)$ is $\frac{2\pi}{\sqrt{B^{2}-E^{2}}}\mathbb{Z}_{\neq0}$
and the smallest positive period is $\omega=\left\vert \frac{2\pi A}{z_{0}-B}\right\vert =\frac{2\pi A}{\sqrt{B^{2}-E^{2}}}$.
\item If $B^2\leq E^2$ then there does not exist a
vector $v$ with $\left\vert v\right\vert =E$ such that $\sigma_{v}\left(t\right)$
is periodic. 
\end{enumerate}
\end{thm}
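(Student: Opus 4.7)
The plan is to work directly with the explicit formulas in Corollary \ref{GeodEqsThm}, classifying the three cases that appear there; alternatively, one could invoke Theorem \ref{thm:closed contractible magnetic geodesics} and translate via the coordinate change $(x_0, y_0, z_0) = (u_0/A, v_0/A, z_0^{\mathrm{cot}} + B)$ recorded in the remark preceding the theorem. I will sketch the direct approach, since the translation is quick once the present proof is done.

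The two one-parameter subgroup cases in \eqref{StraightGeodEqs} yield lines through the identity which are never periodic, so no solutions arise there. In the spiraling case $z_0 - B \neq 0$ and $(x_0, y_0) \neq (0, 0)$, formula \eqref{SpiralingGeodEqsNoncentral} shows that $(x(t), y(t))$ traces a planar circle of angular frequency $\lambda := (z_0 - B)/A$, and hence is periodic with period set exactly $T_0 \bb{Z}_{\neq 0}$, where $T_0 = 2\pi A/|z_0 - B|$. Therefore $\sigma(t)$ is periodic if and only if $z(t)$ is periodic with a period commensurable with $T_0$. From \eqref{SpiralingGeodEqsCentral}, $z(t)$ has the form $Ct - D\sin(\lambda t)$; since the sinusoidal part already has period $T_0$, the condition for periodicity reduces to the vanishing of the linear drift coefficient $C = z_0 + A(x_0^2 + y_0^2)/(2(z_0 - B))$, which yields precisely $x_0^2 + y_0^2 = -2z_0(z_0 - B)/A$.

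Imposing the energy constraint $E^2 = A(x_0^2 + y_0^2) + z_0^2$ (computed in the orthonormal frame $\{X/\sqrt{A}, Y/\sqrt{A}, Z\}$) and combining with the relation above reduces to the quadratic $(z_0 - B)^2 = B^2 - E^2$. This has real solutions exactly when $B^2 \geq E^2$; the equality case $B^2 = E^2$ forces $z_0 = B$, which contradicts the spiraling hypothesis, establishing part (2). For part (1), the candidate roots are $z_0 = B \pm \sqrt{B^2 - E^2}$; a short sign analysis of the positivity constraint $-z_0(z_0 - B) > 0$, split on $B > 0$ versus $B < 0$, shows that exactly one root survives in each case, which can be written uniformly as $z_0 = B - \sgn(B)\sqrt{B^2 - E^2}$. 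The remaining freedom in choosing $(x_0, y_0)$ on the circle $x_0^2 + y_0^2 = -2z_0(z_0 - B)/A$ produces the advertised one-parameter family, and substituting $|z_0 - B| = \sqrt{B^2 - E^2}$ into the period set $T_0 \bb{Z}_{\neq 0}$ delivers the smallest positive period $2\pi A/\sqrt{B^2 - E^2}$. The only real bookkeeping obstacle is the sign analysis isolating $\sgn(B)$; everything else is direct substitution.
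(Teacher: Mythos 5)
Your proposal is correct and follows essentially the same route as the paper: the body of the paper proves the equivalent cotangent-bundle statement (Theorem \ref{thm:closed contractible magnetic geodesics}) by exactly this argument --- periodicity forces the vanishing of the linear drift coefficient in the central coordinate, which combined with the energy constraint yields $(z_{0}-B)^{2}=B^{2}-E^{2}$, and the spiraling (positivity) condition selects the single root $z_{0}=B-\sgn(B)\sqrt{B^{2}-E^{2}}$ --- and the appendix theorem is then obtained by the coordinate translation $(x_{0},y_{0},z_{0})=(u_{0}/A,\,v_{0}/A,\,z_{0}^{\mathrm{cot}}+B)$ that you also cite. Your direct verification from Corollary \ref{GeodEqsThm} reproduces that computation in tangent-bundle coordinates, so there is nothing to add.
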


\subsection{Periodic Geodesics on Compact Quotients of the Heisenberg group}

We ultimately wish to consider closed magnetic geodesics on Heisenberg
manifolds of the form $\Gamma\backslash H$, where $\Gamma$ is a
cocompact discrete subgroup of $H.$ As above, we proceed by considering
$\gamma$-periodic magnetic geodesics on the cover $H$.

The purpose of this section is stating more precisely, and proving,
the following, which is divided into several cases. See Theorem \ref{thm:NonCentralThm},
Theorem \ref{thm:StraightThm}, and Theorem \ref{thm:SpiralingThm} below.

\begin{thm}
Consider the three-dimensional Heisenberg Lie group $H$ with left
invariant metric $g_{A}$ determined by the orthonormal basis $\left\{ \frac{1}{\sqrt{A}}X,\frac{1}{\sqrt{A}}Y,Z\right\} $
and magnetic form $\Omega=d\left(B\zeta\right)=-B\alpha\wedge\beta$. Fix $\gamma\in H$
and fix energy $E,$ magnetic strength $B$ and metric parameter $A.$
Then we can state precisely the set of periods $\omega$ such that
there exists an initial velocity $v_{p}\in TH$ with $\left\vert v_{p}\right\vert =E$
such that $\sigma_{v_{p}}\left(t\right)$ is $\gamma$-periodic with
period $\omega$. We can also precisely state the set of initial velocities
$v_{p}$, hence the set of geodesics, that produce each period $\omega$.
\end{thm}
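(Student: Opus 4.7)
The plan is to establish the three cases that make up this theorem---non-central $\gamma$, central $\gamma$ with a non-spiraling magnetic geodesic, and central $\gamma$ with a spiraling magnetic geodesic---by translating the classifications already obtained in Section \ref{sec:compact quotients} into the tangent-bundle initial-velocity variables via the change-of-variables dictionary $(x_0,y_0,z_0^T)=(u_0/A,\,v_0/A,\,z_0+B)$ recorded in the remark preceding this theorem, and then globalizing by left-translation.

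First I would reduce to the case $p=e$. Because $g_A$ and $\Omega$ are both left-invariant, $L_p$ is a symmetry of the magnetic system and $\sigma_{v_p}(t)=L_p\sigma_v(t)$, where $v=(L_{p^{-1}})_*v_p$. Hence $\sigma_{v_p}$ is $\gamma$-periodic with period $\omega$ if and only if $\sigma_v$ is $(p^{-1}\gamma p)$-periodic with the same $\omega$, and $|v_p|=|v|$. This reduces the problem to classifying $\gamma'$-periodic magnetic geodesics through the identity as $\gamma'$ varies over the conjugacy class of $\gamma$ in $H$.

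Next I would split on the structure of $\gamma=\exp(V_\gamma+z_\gamma Z)$. When $V_\gamma\neq 0$, Lemma \ref{lem:non-central elts only translate non-spiraling mag goeds} forces any $\gamma$-periodic magnetic geodesic through $e$ to be a non-central one-parameter subgroup, which in the tangent-bundle formulation of Corollary \ref{GeodEqsThm} means $z_0^T=B$ with $(x_0,y_0)\neq 0$. Substituting into \eqref{eq:gamma-periodic geodeisc condition} and applying \eqref{eq:multiplication law} reproduces Theorem \ref{thm:gamma-periodic mag geods when gamma is not central}, yielding period $\omega=\pm|V_\gamma|/\sqrt{E^2-B^2}$ exactly when $E>|B|$, and no $\gamma$-periodic orbits otherwise. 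When $\gamma$ is central, I would further separate the two possibilities for $\sigma_v$. If $\sigma_v$ is a one-parameter subgroup, Lemma \ref{lem:central elements translate only central 1-param subgroups} forces it to be purely central, giving $\sigma_v(t)=\exp(t(\pm E)Z)$ with $\omega=z_\gamma/(\pm E)$. If $\sigma_v$ is spiraling, matching the non-central components in \eqref{eq:gamma-periodic geodeisc condition} produces the resonance condition \eqref{eq:resonance condition}, while matching the central component yields exactly the equation solved in Lemma \ref{lem:effect of energy constraint on range of ells}; pulling the table of admissible $(\ell,z_0)$ pairs through the dictionary above recovers the spiraling case in each of the regimes $E>|B|$, $E<|B|$, and $E=|B|$.

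The main obstacle is careful bookkeeping rather than analysis: one must translate the inequalities and equalities of Lemma \ref{lem:effect of energy constraint on range of ells} from the cotangent-bundle parameters $(u_0,v_0,z_0)$ into the tangent-bundle data $(x_0,y_0,z_0^T)$, and verify that the energy constraint $|v|^2=A(x_0^2+y_0^2)+(z_0^T)^2=E^2$ agrees with \eqref{eq:energy of spiraling mag geod} under this change of variables. Once this translation is set up, each assertion in the three sub-theorems is a direct restatement of a result already proved in Section \ref{sec:compact quotients}; the left-translation step at the beginning then promotes each statement from $v\in T_eH$ to an arbitrary $v_p\in T_pH$.
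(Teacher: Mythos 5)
Your proposal is correct and follows essentially the same route as the paper: the appendix theorems are exactly the Section~\ref{sec:compact quotients} results re-expressed in tangent-bundle variables, your left-translation reduction and the case split on $\gamma$ central versus non-central mirror Lemmas~\ref{lem:non-central elts only translate non-spiraling mag goeds}, \ref{lem:central elements translate only central 1-param subgroups} and \ref{lem:effect of energy constraint on range of ells}, and the dictionary $(x_0,y_0,z_0)=(u_0/A,\,v_0/A,\,z_0+B)$ is precisely the one the paper records in the remark following Corollary~\ref{GeodEqsThm}. The only caveat is that Theorem~\ref{thm:SpiralingThm} re-partitions the admissible parameters into six cases indexed by $\zeta_\ell/A$ rather than by the sign of $z_0$ as in the table of Lemma~\ref{lem:effect of energy constraint on range of ells}, so the bookkeeping you defer includes a genuine re-organization of the same parameter set, but this does not affect correctness.
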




\subsubsection{Noncentral Case}

Let $\gamma=\exp\left(x_\gamma X+y_\gamma Y+z_\gamma Z\right)\in H$ with
$x_\gamma^{2}+y_\gamma^{2}\neq0$. Let $a=\exp\left(a_{x}X+a_{y}Y+a_{z}Z\right)\in H$.\ From
(\ref{eq:multiplication law}), 
the conjugacy class of $\gamma$ in $H$ is $\exp\left(\hat{x}X+\hat{y}Y+\mathbb{R}Z\right)$.
\begin{thm}
\label{thm:NonCentralThm} Fix energy $E,$ magnetic strength $B$ and metric parameter $A.$
Let $\gamma=\exp\left(x_\gamma X+y_\gamma Y+z_\gamma Z\right)\in H$ with
$x_\gamma^{2}+y_\gamma^{2}\neq0$.
\begin{enumerate}
\item If $E^2>B^2$ (ie, if $\mu>1$ ) then there exists
a two-parameter family of elements $a\in H$ such that $a\gamma a^{-1}=\exp\left(x_\gamma X+y_\gamma Y+z_\gamma^{\prime}Z\right)$
where $z_\gamma^{\prime}=\pm B\sqrt{A\left(x_\gamma^{2}+y_\gamma^{2}\right)/\left(E^{2}-B^{2}\right)}$.
Letting $v=$ $\frac{B}{z_\gamma^{\prime}}\left(x_\gamma X+y_\gamma Y+z_\gamma^{\prime}Z\right)$,
which satisfies $\left\vert v\right\vert =E$, then $\gamma$ translates
the (non-spiraling) magnetic geodesic $a^{-1}\exp\left(tv\right)$
with period $\omega=\pm\sqrt{A\left(x_\gamma^{2}+y_\gamma^{2}\right)/\left(E^{2}-B^{2}\right)}$.
These are the only magnetic geodesics with energy $E$ translated
by $\gamma$.
\item If $E^2\leq B^2$ (ie, if $\mu\leq1$ ) then neither
$\gamma$ nor any of its conjugates in $H$ translate a magnetic geodesic
with energy $E$. 
\end{enumerate}
\end{thm}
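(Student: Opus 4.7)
The plan is to recognize this theorem as the tangent-bundle repackaging of Theorem \ref{thm:gamma-periodic mag geods when gamma is not central}; the additional content is the explicit parametrization of the $\gamma$-periodic magnetic geodesics by conjugating elements $a \in H$.

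First, I would invoke Lemma \ref{lem:non-central elts only translate non-spiraling mag goeds}: since $V_\gamma := x_\gamma X + y_\gamma Y \neq 0$, every $\gamma$-periodic magnetic geodesic on $H$ is a non-spiraling, non-central one-parameter subgroup, possibly left-translated. By Corollary \ref{GeodEqsThm} (the case $z_0 - B = 0$), any such geodesic through the identity has the form $\exp(tv)$ with $v = x_0 X + y_0 Y + BZ$ and $(x_0, y_0) \neq (0,0)$; a general one is $a^{-1}\exp(tv)$ for some $a \in H$, still of speed $|v|$. The $\gamma$-periodicity condition $\gamma \cdot a^{-1}\exp(tv) = a^{-1}\exp((t+\omega)v)$ is equivalent to $(a\gamma a^{-1})\exp(tv) = \exp((t+\omega)v)$; writing $\gamma' := a\gamma a^{-1} = \exp(V_{\gamma'} + z_{\gamma'}Z)$ and applying \eqref{eq:multiplication law}, matching the $\mf{v}$-component forces $V_{\gamma'} = \omega\, v_{\mf{v}}$ (so $V_{\gamma'}$ is parallel to $v_{\mf{v}}$ and $[V_{\gamma'}, v_{\mf{v}}] = 0$), after which matching the $\mf{z}$-component collapses to $z_{\gamma'} = \omega B$.

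Next I would analyze the conjugation explicitly. A short CBH computation in the three-dimensional Heisenberg group yields $\exp(a_x X + a_y Y + a_z Z)\,\gamma\,\exp(-a_x X - a_y Y - a_z Z) = \exp\bigl(x_\gamma X + y_\gamma Y + (z_\gamma + a_x y_\gamma - a_y x_\gamma) Z\bigr)$, so conjugation preserves the $\mf{v}$-component (forcing $V_{\gamma'} = V_\gamma$) and can shift the central component to any prescribed real value by solving the single linear equation $a_x y_\gamma - a_y x_\gamma = z_{\gamma'} - z_\gamma$ in the three parameters of $a$; the solution set is a two-parameter affine subspace of $\bb{R}^3$, giving the advertised two-parameter family. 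The energy constraint now reads $E^2 = |v|^2 = |v_{\mf{v}}|^2 + B^2 = |V_\gamma|^2/\omega^2 + B^2 = A(x_\gamma^2 + y_\gamma^2)/\omega^2 + B^2$, which has no real solution $\omega$ when $E^2 \leq B^2$ (proving part (2)) and otherwise gives $\omega = \pm\sqrt{A(x_\gamma^2 + y_\gamma^2)/(E^2 - B^2)}$, whence $z_{\gamma'} = \omega B = \pm B\sqrt{A(x_\gamma^2 + y_\gamma^2)/(E^2 - B^2)}$ and $v = V_\gamma/\omega + BZ = (B/z_{\gamma'})(x_\gamma X + y_\gamma Y + z_{\gamma'} Z)$, matching the stated formulas exactly.

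The main obstacle is notational rather than conceptual: carefully reconciling the cotangent-bundle parametrization of Section \ref{sec:magnetic geodesic equations on simply connected 2n+1 dim Heis} (where $z_0 = 0$ selects non-spiraling geodesics and the central component of the initial velocity arises as $\sharp(B\zeta) = BZ$) with the tangent-bundle parametrization used here, so that no spurious factors of $A$ or sign errors intrude. The completeness assertion (``these are the only'' geodesics) is then automatic from Lemma \ref{lem:non-central elts only translate non-spiraling mag goeds} together with the uniqueness of the magnetic geodesic determined by a given initial velocity.
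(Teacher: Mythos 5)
Your proposal is correct and follows essentially the same route as the paper's argument for the corresponding statement (Theorem \ref{thm:gamma-periodic mag geods when gamma is not central}): reduce to non-spiraling geodesics via Lemma \ref{lem:non-central elts only translate non-spiraling mag goeds}, exploit that conjugation in $H$ fixes the $\mf{v}$-component of $\log\gamma$ while shifting the central component arbitrarily, and solve the energy constraint $E^2 = A(x_\gamma^2+y_\gamma^2)/\omega^2 + B^2$ for $\omega$. The only difference is that you carry out explicitly the conjugation computation (and hence the two-parameter family of conjugators $a$) that the paper dismisses as ``straightforward,'' which is exactly the extra content the appendix version of the theorem requires.
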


\subsubsection{Central Case}

Throughout this subsection, we assume that $x_\gamma=y_\gamma=0$;
i.e., that $\gamma$ lies in $Z\left(H\right),$ the center of three-dimensional
Heisenberg group $H.$ Recall that since $\gamma$ is central, $\gamma$
translates a magnetic geodesic $\sigma\left(t\right)$ through the
identity $e\in H$ with period $\omega$ if and only if for all $a\in H$,
$\gamma$ translates a magnetic geodesic through $a$ with period
$\omega$. \ That is, without loss of generality, if $\gamma$ lies
in the center, we may assume that $\sigma\left(0\right)=e$. 

Recall that magnetic geodesics in $H$ are either spiraling or one
parameter subgroups. We first consider the case of one-parameter subgroups.
\begin{thm}
\label{thm:StraightThm} Fix energy $E,$ magnetic strength $B$ and metric parameter $A.$
\ Let $\gamma=\exp\left(z_\gamma Z\right)\in Z\left(H\right)$, with
$z_\gamma \neq0$. The element $\gamma$ translates the magnetic geodesics
$\sigma\left(t\right)=$ $\exp\left(\pm tEZ\right)$ with initial
velocities $v=\pm EZ$ and periods $\omega=\pm z_\gamma /E$. This pair
of one-parameter subgroups and their left translates are the only
straight magnetic geodesics translated by $\gamma$.
\end{thm}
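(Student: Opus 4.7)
The plan is to reduce to straight magnetic geodesics through the identity and then directly match the components of $\gamma\sigma(t)$ and $\sigma(t+\omega)$ using the multiplication formula \eqref{eq:multiplication law}, exploiting that $\gamma \in Z(H)$ makes every bracket with $\gamma$ vanish. Because $\gamma$ is central, $\gamma \circ L_a = L_a \circ \gamma$ for every $a \in H$, so $\gamma$ translates a left translate $L_a\sigma$ with period $\omega$ if and only if $\gamma$ translates $\sigma$ with the same period. This reduces the problem to classifying the one-parameter subgroup magnetic geodesics through $e$ that are translated by $\gamma$.

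For existence, I would note that by the second clause of Corollary \ref{GeodEqsThm}, the curves $\sigma_{\pm}(t) = \exp(\pm tEZ)$ are one-parameter subgroup magnetic geodesics (they satisfy $x_{0}=y_{0}=0$). Their energy is $|\pm EZ| = E$ since $Z$ is a unit vector under $g_A$. A direct application of \eqref{eq:multiplication law}, using $[z_\gamma Z, \pm tEZ]=0$, gives
\[
\gamma\sigma_{\pm}(t) \;=\; \exp\bigl((z_\gamma \pm tE)Z\bigr) \;=\; \sigma_{\pm}\bigl(t + z_\gamma/(\pm E)\bigr),
\]
showing that $\gamma$ translates $\sigma_{\pm}$ with period $\omega = \pm z_\gamma/E$.

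For uniqueness, I would let $\sigma(t) = \exp(tv)$ with $v = x_0 X + y_0 Y + z_0 Z$ be any one-parameter subgroup magnetic geodesic through $e$ translated by $\gamma$ with period $\omega \neq 0$. Using \eqref{eq:multiplication law} and the centrality of $Z$,
\[
\gamma\sigma(t) \;=\; \exp\bigl(tx_0 X + ty_0 Y + (z_\gamma + tz_0)Z\bigr),
\]
while $\sigma(t+\omega) = \exp\bigl((t+\omega)(x_0 X + y_0 Y + z_0 Z)\bigr)$. Matching the $X$ and $Y$ coefficients and using $\omega \neq 0$ forces $x_0 = y_0 = 0$, and matching the $Z$ coefficients then gives $z_\gamma = \omega z_0$. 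The energy constraint reduces to $z_0^2 = E^2$, yielding $z_0 = \pm E$ and $\omega = \pm z_\gamma/E$, exactly as stated. No substantial obstacle arises; this is essentially the tangent-bundle reformulation of Lemma \ref{lem:central elements translate only central 1-param subgroups}, and the key simplification is that centrality of $Z$ trivializes the bracket term in \eqref{eq:multiplication law}, leaving only linear-in-$t$ expressions to match.
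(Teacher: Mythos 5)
Your proof is correct and follows essentially the same route as the paper's argument for Lemma \ref{lem:central elements translate only central 1-param subgroups}, of which Theorem \ref{thm:StraightThm} is the tangent-bundle restatement: reduce to geodesics through the identity using centrality of $\gamma$, force $x_0 = y_0 = 0$ by matching the noncentral components of $\gamma\sigma(t)$ and $\sigma(t+\omega)$, and then read off $z_0 = \pm E$ and $\omega = \pm z_\gamma/E$ from the energy constraint. (One cosmetic slip: the one-parameter-subgroup case is the first clause of Corollary \ref{GeodEqsThm}, not the second.)
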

%
%

\begin{thm}
\label{thm:SpiralingThm}Fix energy $E,$ magnetic strength $B, $ and metric parameter $A.$
Denote $\mu=\frac{E}{\left\vert B\right\vert }$. Let $\gamma=\exp\left(z_\gamma Z\right)\in Z\left(H\right)$,
$z_\gamma\neq0$. If there exists a vector $v=x_{0}X+y_{0}Y+z_{0}Z$
and a period $\omega\neq0$ such that the spiraling geodesic $\sigma_{v}\left(t\right)$
is $\gamma$-periodic with period $\omega$, then there exists $\ell\in\mathbb{Z}_{\neq0}$
such that $\zeta_{\ell}=$ $\frac{z_\gamma}{\pi \ell}$ $\ $satisfies the
conditions relative to $\mu$ specified in the following six cases
and $A\left(x_{0}^{2}+y_{0}^{2}\right)$, $z_{0}$, and
$\omega$ are as expressed below. Conversely, for every choice of
$\ell\in\mathbb{Z}_{\neq0}$ $\ $such that $\zeta_{\ell}=\frac{z_\gamma}{\pi \ell}$
satisfies the conditions in one of the cases below, there exists at
least one vector $v$ as given below such that $\sigma_{v}\left(t\right)$
is $\gamma$-periodic (spiraling) geodesic with period $\omega$ as
given below. \ Note that Case 1 requires $E^2<B^2$.
Cases 2 through 5 require $E^2>B^2,$ and Case
6 requires $E^2=B^2.$ Note that in all cases, $\zeta_\ell\neq0.$

\begin{enumerate}
\item $\frac{-2\mu}{1-\mu}<\frac{\zeta_{\ell}}{A}<\frac{2\mu}{1+\mu}<1$,
\item $1<\frac{2\mu}{1+\mu}<\frac{\zeta_{\ell}}{A}<2$,
\item $2<\frac{\zeta_{\ell}}{A}\leq\frac{2\mu}{\mu-1}$,
\item $2<\frac{2\mu}{\mu-1}<\frac{\zeta_{\ell}}{A}$,
\item $\frac{\zeta_{\ell}}{A}=2$ and $\mu>1$,
\item $\frac{\zeta_{\ell}}{A}=1$ and $\mu=1.$ 
\end{enumerate}
In Cases 1 through 4, we choose any $x_{0},y_{0}\in\mathbb{R}$ so
that 
\begin{equation}
A\left(x_0^2+y_0^2\right)=B^{2}\left(\frac{\mu^{2}-1}{\frac{\zeta_{\ell}}{A}-1}\left(\frac{\zeta_{\ell}}{A}-2\right)+2\sqrt{\frac{\mu^{2}-1}{\frac{\zeta_{\ell}}{A}-1}}\right)\label{Ebar-plus}
\end{equation}
and let
\begin{equation}
z_{0}=-B\left(-1+\sqrt{\frac{\mu^{2}-1}{\frac{\zeta_{\ell}}{A}-1}}\right)\label{z0-plus}
\end{equation}
and 
\[
\omega=\frac{2z_\gamma A}{\zeta_{\ell}\left(z_{0}-B\right)}=\frac{2\pi \ell\sqrt{\left\vert \frac{\zeta_{\ell}}{A}-1\right\vert }}{\sqrt{E^{2}-B^{2}}}\text{.}
\]
In Case 4, we may also choose any $x_{0},y_{0}\in\mathbb{R}$ so that
\begin{equation}
A\left(x_0^2+y_0^2\right)=B^{2}\left(\frac{\mu^{2}-1}{\frac{\zeta_{\ell}}{A}-1}\left(\frac{\zeta_{\ell}}{A}-2\right)-2\sqrt{\frac{\mu^{2}-1}{\frac{\zeta_{\ell}}{A}-1}}\right)\label{Ebar-minus}
\end{equation}
and let 
\begin{equation}
z_{0}=-B\left(-1-\sqrt{\frac{\mu^{2}-1}{\frac{\zeta_{\ell}}{A}-1}}\right)\label{z0-minus}
\end{equation}
and 
\[
\omega=\frac{2 z_\gamma A}{\zeta_{\ell}\left(z_{0}-B\right)}=-\frac{2\pi \ell\sqrt{\left\vert \frac{\zeta_{\ell}}{A}-1\right\vert }}{\sqrt{E^{2}-B^{2}}}\text{.}
\]
The conditions on $\mu,\zeta_{\ell},x_{0}$,$y_{0\text{ }}$and $z_{0}$
imply $\frac{\mu^{2}-1}{\frac{\zeta_{\ell}}{A}-1}>0$, $x_0^2+y_0^2>0$,
$E^{2}=A\left(x_0^2+y_0^2\right) \allowbreak +z_{0}^{2}$, and the (spiraling) magnetic geodesic
through the identity $\sigma_{v}\left(t\right)$ with initial velocity
$v=x_{0}X+y_{0}Y+z_{0}Z$ is $\gamma=\exp\left(z_\gamma Z\right)$-periodic
with energy $E$ and period $\omega$ as given.

In Case 5, which only occurs if $\frac{z_\gamma}{A}\in2\pi\mathbb{Z}_{\neq0}$,
we choose any $x_{0},y_{0\text{ }}\in\mathbb{R}$ so that 
\[
A\left(x_0^2+y_0^2\right)=2\left\vert B\right\vert \sqrt{E^{2}-B^{2}}
\]
and
\[
z_{0}=B-\frac{A\left(x_0^2+y_0^2\right)}{2B}\text{.}
\]
Then the conditions on $\mu,\zeta_{\ell},x_{0},y_{0}$ and $z_{0\text{ }}$imply
that $E^{2}=A\left(x_0^2+y_0^2\right)+z_{0}^{2}$ and the (spiraling) magnetic geodesic
$\sigma_{v}\left(t\right)$ starting at the identity with initial
velocity $v=x_{0}X+y_{0}Y+z_{0}Z$ is $\gamma$-periodic with energy
$E$ and period 
\[
\omega=-\mathrm{sgn}\left(B\right)\frac{z_\gamma} {\sqrt{E^{2}-B^{2}}}\text{.}
\]

In Case 6, which only occurs if $\frac{z_\gamma}{A}\in\pi\mathbb{Z}_{\neq0}$,
we choose any $x_{0},y_{0\text{ }},z_{0}\in\mathbb{R}$ so that $E^{2}=B^{2}=A\left(x_{0}^{2}+y_{0}^{2}\right)+z_{0}^{2}$
and $z_{0}\neq\pm B$. The conditions on $\mu$ and $\zeta_{\ell}$ imply
that the (spiraling) magnetic geodesic $\sigma_{v}\left(t\right)$
with intial velocity $v=x_{0}X+y_{0}Y+z_{0}Z$\ will yield a $\gamma$-periodic
magnetic geodesic with energy $E$ and period 
\[
\omega=\frac{2 z_\gamma} {z_{0}-B}\text{.}
\]
\end{thm}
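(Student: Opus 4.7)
The plan is to mimic the argument behind Lemma \ref{lem:effect of energy constraint on range of ells}, but run entirely within the tangent-bundle formulas of Corollary \ref{GeodEqsThm}. Assume $\sigma_v$ is spiraling, so $z_0 \neq B$ and $x_0^2+y_0^2 > 0$. Because $\gamma = \exp(z_\gamma Z)$ is central, the Campbell--Baker--Hausdorff formula gives $\gamma\sigma(t) = \exp(x(t)X + y(t)Y + (z(t)+z_\gamma)Z)$, so the $\gamma$-periodicity condition $\gamma\sigma(t) = \sigma(t+\omega)$ splits coordinate-wise. Reading off the $X,Y$ components of \eqref{SpiralingGeodEqsNoncentral} forces the resonance condition
\[
(z_0-B)\omega/A = 2\pi\ell, \qquad \ell \in \mathbb{Z}_{\neq 0},
\]
while the linear-in-$t$ piece of \eqref{SpiralingGeodEqsCentral} yields the central equation $z_\gamma = \bigl(z_0 + \tfrac{A(x_0^2+y_0^2)}{2(z_0-B)}\bigr)\omega$.

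Next, substitute $\omega = 2\pi A\ell/(z_0-B)$ into the central equation, clear denominators, and use the energy constraint $E^2 = A(x_0^2+y_0^2) + z_0^2$ to eliminate $A(x_0^2+y_0^2)$. After regrouping the right-hand side around $(z_0-B)^2$ the identity collapses to
\[
\Bigl(\tfrac{\zeta_\ell}{A}-1\Bigr)(z_0-B)^2 = E^2 - B^2 = B^2(\mu^2-1),
\]
so $z_0-B = \pm B\sqrt{(\mu^2-1)/(\zeta_\ell/A - 1)}$, which requires $\mu^2-1$ and $\zeta_\ell/A - 1$ to have the same sign (giving the split $\mu > 1$, $\mu < 1$, $\mu=1$, the latter forcing $\zeta_\ell = A$). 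This immediately recovers \eqref{z0-plus} and \eqref{z0-minus}; plugging back into $E^2 - z_0^2 = A(x_0^2+y_0^2)$ and simplifying with $r^2 = (\mu^2-1)/(\zeta_\ell/A-1)$ yields formulas \eqref{Ebar-plus} and \eqref{Ebar-minus}.

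The six cases come entirely from imposing $A(x_0^2+y_0^2) > 0$ (the spiraling condition) on these expressions. Writing $s = \zeta_\ell/A$ and using $\mu^2 - 1 = r^2(s-1)$, the constraint reduces to $r(s-2) \pm 2 > 0$, which after squaring factors through the algebraic identity
\[
(\mu^2-1)(2-s)^2 - 4(s-1) = \bigl(s(\mu+1)-2\mu\bigr)\bigl(s(\mu-1)-2\mu\bigr).
\]
The main (and essentially only) obstacle is a careful sign audit of this product across the regimes $\mu \lessgtr 1$ and $s \lessgtr 1, 2$: this is what produces the four critical values $\tfrac{\pm 2\mu}{1\pm\mu}$ and the boundary $s=2$, and in particular it explains why the ``$-$'' branch of the square root contributes only in Case 4 (where $s > \tfrac{2\mu}{\mu-1}$). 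The period formula $\omega = 2\pi A\ell/(z_0-B) = 2z_\gamma A/\bigl(\zeta_\ell(z_0-B)\bigr)$ follows directly from the resonance condition, with the sign of $\omega$ tracking the choice of $\pm$ in $z_0 - B$.

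Finally, the critical case $\mu = 1$ (Case 6) is handled separately: the quadratic becomes degenerate, so $z_0$ is unconstrained subject only to $z_0 \neq \pm B$ and $E^2 = A(x_0^2+y_0^2) + z_0^2$, while the central equation forces $\zeta_\ell = A$, i.e.\ $z_\gamma/A \in \pi\mathbb{Z}_{\neq 0}$, and the period simplifies to $\omega = 2z_\gamma/(z_0-B)$. For the converse direction, each of the constructed triples $(x_0,y_0,z_0)$ satisfies the resonance and central equations by the very computation that produced them, so $\sigma_v$ is $\gamma$-periodic with the stated $\omega$, closing the equivalence.
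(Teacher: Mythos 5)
Your proposal is correct, and it reaches the theorem by a cleaner elimination order than the paper's own argument. The paper keeps $\bar E^2 = A(x_0^2+y_0^2)$ as an unknown, rewrites the central condition as a quadratic in $z_0$ with leading coefficient $2-\tfrac{\zeta_\ell}{A}$, solves it by the quadratic formula, and only afterwards imposes $E^2=\bar E^2+z_0^2$, which yields a second quadratic (in $\bar E^2$) and a delicate matching of the two $\pm$ signs; the value $\tfrac{\zeta_\ell}{A}=2$ must then be treated as a separate degenerate case. You instead substitute $A(x_0^2+y_0^2)=E^2-z_0^2$ into the central equation at the outset, collapsing everything to the single relation $\bigl(\tfrac{\zeta_\ell}{A}-1\bigr)(z_0-B)^2=E^2-B^2$, from which \eqref{z0-plus}, \eqref{z0-minus} and \eqref{Ebar-plus}, \eqref{Ebar-minus} follow with no quadratic formula and no branch-matching, and for which Case 5 is simply the instance $\tfrac{\zeta_\ell}{A}=2$ rather than a degenerate subcase. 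This is in effect the tangent-bundle translation of the paper's Section \ref{sec:compact quotients} derivation (equation \eqref{eq:central periodic condition 3} and Lemma \ref{lem:effect of energy constraint on range of ells}), which uses the same elimination order. Your factorization identity $(\mu^2-1)(2-s)^2-4(s-1)=(s(\mu+1)-2\mu)(s(\mu-1)-2\mu)$ checks out and is exactly what produces the thresholds $\tfrac{2\mu}{1\pm\mu}$; what you leave implicit is the routine sign audit of this product in each regime (which singles out the ``$+$'' branch in Cases 1--3 and admits both branches only in Case 4) and the observation that Cases 5 and 6 force $z_\gamma\in 2\pi A\mathbb{Z}_{\neq 0}$ and $z_\gamma\in\pi A\mathbb{Z}_{\neq 0}$ respectively. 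These steps are genuinely mechanical, so the outline is complete.
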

\begin{rmk}
In Case 1, there are infinitely many values of $\ell$ that satisfy the
conditions, hence infinitely many distinct periods $\omega.$ In particular,
if $\mu<1$ and there exists $\ell_{0}\in\mathbb{Z}_{>0}$ such that
$\zeta_{\ell_{0}}\in\left(\frac{-2\mu}{1-\mu},\frac{2\mu}{1+\mu}\right)$,
then for all $\ell>\ell_{0}$, $\zeta_{\ell}\in\left(\frac{-2\mu}{1-\mu},\frac{2\mu}{1+\mu}\right)$.
Likewise if there exists $\ell_{0}\in\mathbb{Z}_{<0}$ such that $\zeta_{\ell_{0}}\in\left(\frac{-2\mu}{1-\mu},\frac{2\mu}{1+\mu}\right)$,
then for all $\ell<\ell_{0}$, $\zeta_{\ell}\in\left(\frac{-2\mu}{1-\mu},\frac{2\mu}{1+\mu}\right)$.
\end{rmk}

\begin{rmk}
In Case 6, the magnitude of the periods take all values in the interval
$\left(\left\vert z_\gamma\right\vert /E,\infty\right)$. The period
$\omega=\left\vert z_\gamma\right\vert /E$ is achieved when $v=-BZ$,
which implies $\sigma_{v}$is a one-parameter subgroup; i.e., non-spiraling.
The magnitude of the period approaches $\infty$ as $v\rightarrow BZ$.
This behavior is in contrast to the Riemannian case; i.e., the case
$B=0$. In the Riemannian case, there are finitely many periods associated
to each element $\gamma.$ However, if there exists $\gamma\in\Gamma$
such that $\log\gamma\in2\pi\mathbb{Z},$ then $\Gamma\backslash H$
does not satisfy the Clean Intersection Hypothesis, so the fact that
unusual magnetic geodesic behavior occurs in this case is not unprecedented (see \cite{gornet2005trace}).
\end{rmk}

\bibliographystyle{alpha}
\bibliography{mag-flow}

\end{document}